\pgfplotsset{compat=1.15}
\newtheorem{thm}{Theorem}[section]
\newtheorem{lem}[thm]{Lemma}
\numberwithin{equation}{section}
\theoremstyle{definition}
\newtheorem{defn}[thm]{Definition}
\newtheorem*{remark}{Remark}
\newtheorem*{acknowledgement}{Acknowledgements}
\newcommand{\comment}[1]{}
\newcommand{\norm}[1]{{\left\Vert#1\right\Vert}}
\newcommand{\genlegendre}[4]{%
  \genfrac{(}{)}{}{#1}{#3}{#4}%
  \if\relax\detokenize{#2}\relax\else_{\!#2}\fi
}
\newcommand{\legendre}[3][]{\genlegendre{}{#1}{#2}{#3}}
\newcommand\oneE[1][E]{\mathop{\mathds{1}^{}_{#1}}} 
\begin{document}

\definecolor{qqttcc}{rgb}{0.,0.2,0.8}
	\title{Distribution of similar configurations in subsets of $\mathbb{F}_q^d$}
	\author{Firdavs Rakhmonov}
	
	\date{\today}
	\address{Department of Mathematics, University of Rochester, Rochester, NY, USA}
	\email{frakhmon@ur.rochester.edu}
	\keywords{}
	\maketitle

\begin{abstract} 
	Let $\mathbb{F}_q$ be a finite field of order $q$ and $E$ be a set in $\mathbb{F}_q^d$. The distance set of $E$ is defined by $\Delta(E):=\{\lVert x-y \rVert :x,y\in E\}$, where $\lVert \alpha \rVert=\alpha_1^2+\dots+\alpha_d^2$. Iosevich, Koh and Parshall (2018) proved that if $d\geq 2$ is even and $|E|\geq 9q^{d/2}$, then $$\mathbb{F}_q= \frac{\Delta(E)}{\Delta(E)}=\left\{\frac{a}{b}: a\in \Delta(E),\ b\in \Delta(E)\setminus\{0\} \right\}.$$ In other words, for each $r\in \mathbb{F}_q^*$ there exist $(x,y)\in E^2$ and $(x',y')\in E^2$ such that $\lVert x-y\rVert\neq0$ and $\lVert x'-y' \rVert=r\lVert x-y\rVert$.

    \smallskip
	
	Geometrically, this means that if the size of $E$ is large, then for any given $r \in \mathbb{F}_q^*$ we can find a pair of edges in the complete graph $K_{|E|}$ with vertex set $E$ such that one of them is dilated by $r\in \mathbb{F}_q^*$ with respect to the other. A natural question arises whether it is possible to generalize this result to arbitrary subgraphs of $K_{|E|}$ with vertex set $E$ and this is the goal of this paper.

    \smallskip
    
    In this paper, we solve this problem for $k$-paths $(k\geq 2)$, simplexes and 4-cycles. We are using a mix of tools from different areas such as enumerative combinatorics, group actions and Turán type theorems. 
\end{abstract} 

\section{Introduction}

Many problems in discrete geometry ask whether certain structure exists in a set of sufficiently large size. The most renowned result of this type belongs to Szemerédi ~\cite{MR369312} which claims that a subset of natural numbers with positive upper density contains arbitrarily long arithmetic progressions. 

\smallskip

The Erdős distinct distances problem asks for the smallest possible number of distinct distances determined by a finite subset of $\mathbb{R}^d$, $d\geq 2$. In ~\cite{MR3272924}, Guth and Katz solved the Erdős distance problem for $d=2$. For $d\geq 3$ the conjecture remains open with the best results due to Solymosi and Vu (\cite{MR2065269}). They proved that the number of distinct distances in a well-distributed set of $n$ points in $\mathbb{R}^d$ is $\Omega(n^{2/d-1/d^2})$ which is close to the best known upper bound $O(n^{2/d})$ and they have a further improvement $\Omega(n^{0.5794})$ in the case $d=3$. The continuous analog of this problem is called Falconer's conjecture, which asks for the smallest Hausdorff dimension of a subset $E$ of $\mathbb{R}^d$ ($d\geq 2$) such that the Lebesgue measure of the distance set \begin{equation*}
    \Delta(E):=\{|x-y|: x,y\in E\}\end{equation*} is positive.

\smallskip

The Erdős-Falconer distance problem in vector spaces over finite fields asks for the smallest possible size of $$\Delta(E):=\{\lVert x-y\rVert: x,y\in E\},$$ where  $\lVert \alpha\rVert:=\alpha_1^2+\dots+\alpha_d^2$ for $\alpha=(\alpha_1,\dots,\alpha_d)$ and $E\subset \mathbb{F}_q^d$, $d\geq 2$. This problem was introduced by Bourgain, Katz and Tao in ~\cite{MR2053599}. Here $\mathbb{F}_q$ denotes the finite field with $q$ elements and $\mathbb{F}_q^d$ is the $d$-dimensional vector space over this field.

\smallskip

In ~\cite{MR2336319}, Iosevich and Rudnev proved that if $E\subset \mathbb{F}_q^d$, $d\geq 2$, with $|E|>2q^{\frac{d+1}{2}}$, then $\Delta(E)=\mathbb{F}_q$. Moreover, they proved that one cannot in general obtain $cq$ distances if $|E|\ll q^{\frac{d}{2}}$. In ~\cite{MR2775806}, Hart, Iosevich, Koh and Rudnev proved that the size of the critical exponent can be increased to $\frac{d+1}{2}$ in odd dimensions, thus showing that the result in ~\cite{MR2336319} is best possible in this setting. In ~\cite{MR3592595}, Bennett et al. proved that if $d=2$ and $|E|\gg q^{\frac{4}{3}}$, then $|\Delta(E)|>cq$. It is reasonable to conjecture that if $|E|\gg q^{\frac{d}{2}},$ then $|\Delta(E)|>\frac{q}{2},$ but this conjecture still remains open. The exponent $\frac{d}{2}$ cannot be improved. Indeed, let $q=p^2$, where $p$ is a prime and let $E=\mathbb{F}_p^d\subset \mathbb{F}_q^d$. Then $|E|=q^{\frac{d}{2}}$, yet $\Delta(E)=\mathbb{F}_p$. In ~\cite{MR2775806}, Hart et al. proved that if $q$ is a prime and $d\geq 4$, the sharpness of $\frac{d}{2}$ can be demonstrated using Lagrangian subspaces.

\smallskip

Iosevich, Koh and Parshall (see ~\cite{MR3959878}) showed that if $d\geq 2$ is even and $E\subset \mathbb{F}_q^d$ with $|E|\geq 9q^{\frac{d}{2}}$, then
\begin{equation}
\label{containmentevencase}
    \mathbb{F}_q=\dfrac{\Delta(E)}{\Delta(E)}=\left\{\frac{a}{b}: a\in \Delta(E),\ b\in \Delta(E)\setminus\{0\}\right\}.
\end{equation}

If the dimension $d$ is odd and $E\subset \mathbb{F}_q^d$ with $|E|\geq 6 q^{\frac{d}{2}}$, then 
\begin{equation}
\label{containmentoddcase}
    \left(\mathbb{F}_q\right)^2\subset \dfrac{\Delta(E)}{\Delta(E)}, 
\end{equation}
where $\left(\mathbb{F}_q\right)^2:=\left\{a^2:a\in \mathbb{F}_q \right\}$ is the set of quadratic residues in $\mathbb{F}_q$.

\medskip

We shall write $\mathbb F_q^*$ for the set of all non-zero elements in $\mathbb F_q.$                          
\smallskip

As the main idea to deduce \eqref{containmentevencase} and \eqref{containmentoddcase}, the authors \cite{MR3959878} first observed that for any $r\in \mathbb F_q^*,$ we have 
\begin{equation*}
    r\in \frac{\Delta(E)}{\Delta(E)} \quad \textup{if} \quad  \sum_{t\in \mathbb F_q} \nu(t) \nu(rt) >\nu^2(0),    
\end{equation*}
where $\nu(t)\coloneqq \{(x,y)\in E\times E: \lVert x-y\rVert=t\}$.

\medskip

Next, using the discrete Fourier analysis, they  estimated  a lower bound of $\sum_{t\in \mathbb F_q} \nu(t) \nu(rt)$ and an upper bound of $\nu^2(0).$
Finally,  the required size condition on the sets $E$ was obtained by comparing them. Although the method of the proof led to the optimal threshold result on the problem for the quotient set of the distance set, it has two drawbacks below, as mentioned by Pham \cite{Pham}.

\smallskip

\begin{itemize}
\item The proof is too sophisticated and requires large amount of computation.

\smallskip

\item It is not clear from the proof that  how many quadruples $(x,y,z,w)\in E^4$ contribute to producing each element $r\in \mathbb F_q^*$ such that $\frac{||x-y||}{||z-w||}=r,$ namely, $r\in \frac{\Delta(E)}{\Delta(E)}.$
\end{itemize}

As a way to overcome the above issues, Pham \cite{Pham} utilized the machinery of group actions in two dimensions and obtained   a lower bound of $V(r)$ for any square number $r$ in $\mathbb F_q^*,$ 
where $V(r)\coloneqq \{(x,y,z,w)\in E^4: \lVert x-y \rVert / \lVert z-w \rVert=r\}$.

\smallskip

As a consequence, he provided a short proof to deduce the following lower bound of $V(r)$ in two dimensions.

\smallskip

\begin{thm} [Theorem 1.2, \cite{Pham}]\label{Phthm} Let $E\subset \mathbb F_q^2$ and suppose that $|E|\ge C q$ with $q\equiv 3 \pmod{4}.$ Then, for any non-zero square number $r$ in $\mathbb F_q^*,$  we have
$$ V(r)\ge  \frac{c|E|^4}{q}.$$
In particular,  we have $\dfrac{\Delta(E)}{\Delta(E)} \supseteq (\mathbb F_q)^2:=\{a^2: a\in \mathbb F_q\}.$
\end{thm}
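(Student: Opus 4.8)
The plan is to run the group-action argument that avoids Fourier analysis entirely, exploiting the fact that $q\equiv 3\pmod 4$ makes $-1$ a non-square in $\mathbb F_q$. Fix a non-zero square $r=s^2\in\mathbb F_q^*$. Since $-1$ is a non-square, $\lVert v\rVert=0$ forces $v=0$ in $\mathbb F_q^2$, so writing $\nu(t)=\#\{(x,y)\in E^2:\lVert x-y\rVert=t\}$ one has $\nu(0)=|E|$, and from $V(r)=\sum_{t\in\mathbb F_q}\nu(rt)\nu(t)$ we get
$$V(r)=|E|^2+\#\{(x,y,z,w)\in E^4:\lVert x-y\rVert=r\lVert z-w\rVert\neq 0\}=:|E|^2+V^*(r).$$
Thus it suffices to bound $V^*(r)$ below by $c|E|^4/q$; taking $C=2$ at the end will give, say, $c=3/4$.

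Next I would set up the action. Let $SO_2(\mathbb F_q)$ be the rotation group of the form $\lVert\cdot\rVert$. When $q\equiv 3\pmod 4$ every non-zero sphere $\{v:\lVert v\rVert=t\}$ (with $t\neq 0$) has exactly $q+1$ points, $|SO_2(\mathbb F_q)|=q+1$, and $SO_2(\mathbb F_q)$ acts \emph{simply transitively} on each such sphere (no non-identity rotation fixes a non-zero vector, again because $-1$ is a non-square). Let $M_r$ be the set of maps $g_{O,b}\colon v\mapsto sOv+b$ with $O\in SO_2(\mathbb F_q)$, $b\in\mathbb F_q^2$; then $|M_r|=(q+1)q^2$ and $\lVert g(x)-g(y)\rVert=r\lVert x-y\rVert$ for every $g\in M_r$. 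The key combinatorial fact is: if $\lVert z-w\rVert=t\neq 0$ and $\lVert x-y\rVert=rt$, then there is \emph{exactly one} $g\in M_r$ with $g(z)=x$ and $g(w)=y$ — indeed such a $g$ must satisfy $O(z-w)=s^{-1}(x-y)$, both sides lie on the sphere of radius $t\neq 0$, so simple transitivity pins down $O$, and then $b=x-sOz$ is forced.

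This bijection rewrites $V^*(r)$ as a group-averaged incidence count: with $f(g):=|E\cap g^{-1}(E)|$ one obtains
$$V^*(r)+\#\{(z,g)\in E\times M_r:g(z)\in E\}=\sum_{g\in M_r}f(g)^2,$$
where the extra term records the configurations $z=w$ excluded above. Now apply Cauchy--Schwarz, $\sum_{g\in M_r}f(g)^2\ge\bigl(\sum_{g\in M_r}f(g)\bigr)^2/|M_r|$, together with the exact evaluation $\sum_{g\in M_r}f(g)=\#\{(z,g)\in E\times M_r:g(z)\in E\}=(q+1)|E|^2$ (for each fixed $z\in E$ and each target point there are exactly $q+1$ maps in $M_r$ sending $z$ there, one per rotation). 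This gives $\sum_{g}f(g)^2\ge (q+1)|E|^4/q^2$, hence
$$V^*(r)\ge\frac{(q+1)|E|^4}{q^2}-(q+1)|E|^2=\frac{(q+1)|E|^2}{q^2}\bigl(|E|^2-q^2\bigr)\ge\frac{c|E|^4}{q}$$
once $|E|\ge Cq$ with $C$ (e.g. $C=2$) chosen so the subtracted term is a bounded fraction of the main term. Since then $V(r)\ge V^*(r)>0$, there is a genuine quadruple realizing $r=\lVert x-y\rVert/\lVert z-w\rVert$, so $r\in\Delta(E)/\Delta(E)$; as $0=0/\lVert z-w\rVert\in\Delta(E)/\Delta(E)$ too (there are distinct points in $E$), this yields $(\mathbb F_q)^2\subseteq\Delta(E)/\Delta(E)$.

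The step I expect to be the crux is the ``exactly one $g$'' statement, and underneath it the simple transitivity of $SO_2(\mathbb F_q)$ on non-zero spheres: this is precisely where both hypotheses $d=2$ and $q\equiv 3\pmod 4$ are used, and it is also what makes the Cauchy--Schwarz step lossless enough to recover the sharp exponent $q$. The only other point requiring care is the bookkeeping of degenerate configurations ($\lVert z-w\rVert=0$, equivalently $z=w$), which contribute $O(|E|^2)$ and are negligible against $|E|^4/q$ as soon as $|E|\gg q$.
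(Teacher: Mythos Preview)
The paper does not supply its own proof of this theorem; it is quoted from Pham's work as background and motivation, so there is no in-paper argument to compare against. Your proof is correct and is precisely the group-action method attributed to Pham. The crucial step you single out, simple transitivity of $SO_2(\mathbb F_q)$ on non-zero spheres when $q\equiv 3\pmod 4$, is exactly the content of the paper's Lemma~\ref{SO_2 lemma} (stated there over $\mathbb F_p$), though the paper invokes that lemma for a different purpose, namely the first proof of Lemma~\ref{lower bound for S_2(r)}, rather than to reprove Pham's bound. One cosmetic discrepancy: the paper's $V(r)$ already carries the restriction $\lVert z-w\rVert\neq 0$, so it coincides with your $V^*(r)$, not your $V(r)=|E|^2+V^*(r)$; this changes only an additive $|E|^2$ and has no effect on the argument or the conclusion.
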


Pham's approach, based on the group action, is  powerful in the sense that it gives a relatively simple proof and an information about a lower bound of $V(r)$. However, his result, Theorem \ref{Phthm}, is limited to two dimensions with $-1$ square in $\mathbb F_q,$ and it gives us no information about $V(r)$ for a non-square $r$ in $\mathbb F_q^*.$ 

\smallskip

In \cite{RFZ2}, Iosevich, Koh and Rakhmonov improved and extended Pham's result to all dimensions for arbitrary finite fields. In particular, they worked with the non-degenerate quadratic distances which generalize the usual distance.

\smallskip

Equality \eqref{containmentevencase} immediately implies that for each $r\in \mathbb{F}_q^*$ one can find $(x,y)\in E^2$ and $(x',y')\in E^2$ such that $\lVert x-y \rVert\neq 0$ and $\lVert x'-y' \rVert=r\lVert x-y \rVert$. In other words, if $r\in \mathbb{F}_q^*$ and $E\subset \mathbb{F}_q^d$ with $|E|\geq 9 q^{\frac{d}{2}}$, then one can find a pair of edges in the complete graph $K_{|E|}$ with vertex set $E$ such that one of them is dilated by $r$ with respect to the other.

\begin{figure}[htbp]
\centering
\usetikzlibrary{arrows}

\definecolor{qqqqff}{rgb}{0.,0.,1.}
\definecolor{ududff}{rgb}{0.30196078431372547,0.30196078431372547,1.}

\begin{tikzpicture}[scale=0.75][line cap=round,line join=round,>=triangle 45,x=1.0cm,y=1.0cm]

\clip(-1.,-1) rectangle (16.,1.8);
\draw [line width=1.pt,color=qqqqff] (0.96,-0.8)-- (4.46,0.82);
\draw [line width=1.pt,color=qqqqff] (7.9,-0.84)-- (14.2,0.76);

\draw [fill=ududff] (0.96,-0.8) circle (2pt);
\draw[color=black] (0.65,-0.83) node {\scalebox{1.2}{$x$}};
\draw [fill=ududff] (4.46,0.82) circle (2pt);
\draw[color=black] (4.8,0.8) node {\scalebox{1.2}{$y$}};
\draw [fill=ududff] (7.9,-0.84) circle (2pt);
\draw[color=black] (7.6,-0.75) node {\scalebox{1.2}{$x'$}};
\draw [fill=ududff] (14.2,0.76) circle (2pt);
\draw[color=black] (14.6,0.8) node {\scalebox{1.2}{$y'$}};

\draw[color=qqqqff] (2.5,0.4) node {\scalebox{1.2}{$\alpha$}};

\draw[color=qqqqff] (11,0.5) node {\scalebox{1.2}{$r\alpha$}};

\end{tikzpicture}
\label{1-chains}
\caption{Geometric interpretation of Iosevich-Koh-Parshall result.}
\end{figure}
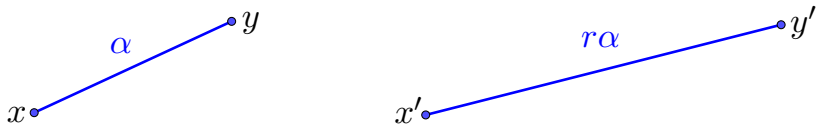

A natural question arises whether it is possible to generalize this result to other subgraphs of the complete graph $K_{|E|}$ with vertex set $E$ and this is the goal of this paper. In this paper, we are about to generalize this result for $k$-paths ($k\geq 2$), 4-cycles and simplexes. 

\smallskip

For clarity, let's discuss the case of 2-paths. Firstly, we assume that $p$ is a prime such that $p\equiv 3 \pmod 4$ since it implies $\lVert x\rVert=0$ iff $x=(0,0)$. Then we prove that for each $r\in \mathbb{F}_p^*$ and $E\subset \mathbb{F}_p^2$ with $|E|\gg p$ there are two copies of 2-path with vertices in $E$, i.e. $(x_1,x_2,x_3)\in E^3$ and $(y_1,y_2,y_3)\in E^3$ such that
\begin{equation}
\label{nondegeneratecondition}
    x_i\neq x_j,\ y_i\neq y_j\quad \text{for}\quad i\neq j
\end{equation} and
\begin{equation}
\label{dilation}
    \lVert y_1-y_2\rVert=r\lVert x_1-x_2\rVert,\quad \lVert y_2-y_3\rVert=r\lVert x_2-x_3\rVert.
\end{equation}

We notice that condition $\eqref{nondegeneratecondition}$ simply means that 2-paths $(x_1,x_2,x_3)$ and $(y_1,y_2,y_3)$ are nondegenerate, i.e. $\lVert x_i-x_j\rVert\neq 0$ and $\lVert y_i-y_j\rVert\neq 0$ for $i\neq j$ which easily follows from the fact that $x_i\neq x_j,\ y_i\neq y_j$ for $i\neq j$ and $p\equiv 3 \pmod 4$.
Condition $\eqref{dilation}$ says that the 2-path $(y_1,y_2,y_3)$ is dilated by $r$ with respect to $(x_1,x_2,x_3)$ (see Figure \ref{2chainsfigure}). 

\smallskip

Likewise, we generalize Iosevich-Koh-Parshall result for 4-cycles, simplexes and $k$-paths ($k\geq 3$). 

\smallskip

Now we pause to state some basic definitions which we need throughout the paper.

\begin{defn} 
Let $E$ be a set in $\mathbb{F}_p^2$ and $r\in \mathbb{F}_p^{*}$. We say that $(x_1,\dots,x_{k+1})\in E^{k+1}$, $(y_1,\dots,y_{k+1})\in E^{k+1}$ is a pair of $k$-paths in $E$ with dilation ratio $r$ if $\lVert y_i-y_{i+1}\rVert=r\lVert x_i-x_{i+1}\rVert$ for $i\in [k]$ and $x_i\neq x_j$, $y_i\neq y_j$  for $1\leq i<j\leq k+1$.
\end{defn}

\begin{defn}
Let $E$ be a set in $\mathbb{F}_p^2$, $r\in \mathbb{F}_p^{*}$ and $k\geq 3$. We say that $(x_1,\dots,x_{k})\in E^{k}$, $(y_1,\dots,y_{k})\in E^{k}$ is a pair of $k$-cycles in $E$ with dilation ratio $r$ if $\lVert y_i-y_{i+1}\rVert=r\lVert x_i-x_{i+1}\rVert$ for $i\in [k-1]$, $\lVert y_k-y_1\rVert=r\lVert x_k-x_1\rVert$  and $x_i\neq x_j$, $y_i\neq y_j$ for $1\leq i<j\leq k$.
\end{defn}

\begin{defn}
Let $E$ be a set in $\mathbb{F}_p^d$, $r\in \mathbb{F}_p^{*}$ and $d\geq 2$. We say that $(x_1,\dots,x_{d+1})\in E^{d+1}$, $(y_1,\dots,y_{d+1})\in E^{d+1}$ is a pair of $d$-simplexes in $E$ with dilation ratio $r$ if $\lVert y_i-y_{j}\rVert=r\lVert x_i-x_{j}\rVert$ for $i,j\in [d+1]$ and $x_i\neq x_j$,\ $y_i\neq y_j$  for $1\leq i<j\leq d+1$.
\end{defn}

\smallskip

Now we proceed to the formulation of the problem. Let $r\in \mathbb{F}_p^{*}$ and $E\subset \mathbb{F}_p^d$ $(d\geq 2)$. One can ask how large $E$ needs to be to guarantee the existence of a pair of $k$-paths ($k$-cycles or $d$-simplexes) in $E$ with dilation ratio $r$. We solve this problem for 2-paths using refined combinatorial and graph-theoretic methods and using the arithmetic structure of spheres in $\mathbb{F}_p^d$ we extend this result to 4-cycles. Then using the arithmetic structure of orthogonal groups over finite fields and tools from group actions we prove the result for 2-simplexes and then generalize it for $d$-simplexes. In this setting, the study of the distance set problem reduces to the investigation $\mathrm{O}_d(\mathbb{F}_p)$ the orthogonal group of $d\times d$ matrices with entries in $\mathbb{F}_p$ and the $L^d$,\ $L^{d+1}$-norms of the counting function $$\lambda_{r,\theta}(z):=|\{(u,v)\in E^2: u-\sqrt{r}\theta v=z\}|,$$ where $\theta\in \mathrm{O}_d(\mathbb{F}_p)$, $z\in \mathbb{F}_p^d$ and $r\in (\mathbb{F}_p)^2\setminus \{0\}$ the set of nonzero quadratic residues in $\mathbb{F}_p$. With this setup, our main results are following:

\

The first theorem asserts the existence of a pair of 2-paths  with dilation ratio $r\in \mathbb{F}_p^{*}$.

\begin{thm}
\label{2chainsthm}
    If $r\in \mathbb{F}_p^{*}$, $p$ is a prime such that $p \equiv 3 \pmod 4$ and $E\subset \mathbb{F}_p^2$ with $|E|>(\sqrt{3}+1)p$, then
    \begin{equation*}
        \left|\left\{(x_1,x_2,x_3,y_1,y_2,y_3)\in E^6: \begin{array}{l}\lVert y_i-y_{i+1}\rVert=r\lVert x_i-x_{i+1}\rVert,\ i\in [2],\\
        x_i\neq x_j,\ y_i\neq y_j,\ i\neq j \end{array}\right\}\right|>0.\\
    \end{equation*}
\end{thm}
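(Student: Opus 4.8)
The plan is to fix the middle vertex of the $2$-path and count its completions, reducing the statement to a single-edge dilation count (the quantity behind \eqref{containmentevencase}) together with a Tur\'an-type supersaturation estimate. Write $N=|E|$, and for $z\in\mathbb{F}_p^2$, $s\in\mathbb{F}_p$ put $g(z,s)=|\{w\in E:\lVert w-z\rVert=s\}|$ and $\nu(s)=\sum_{z\in E}g(z,s)=|\{(x,y)\in E^2:\lVert x-y\rVert=s\}|$. The first thing to record is that, since $p\equiv 3\pmod 4$, one has $\lVert v\rVert=0\iff v=0$; hence $g(z,0)=1$ for $z\in E$, $\nu(0)=N$, $g(z,s)\le|\{v\in\mathbb{F}_p^2:\lVert v\rVert=s\}|=p+1$ for $s\ne 0$, and, crucially, whenever $\lVert y_i-y_{i+1}\rVert=r\lVert x_i-x_{i+1}\rVert$ for $i=1,2$ one has $x_i=x_{i+1}\iff y_i=y_{i+1}$. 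Consequently, for fixed $(x_2,y_2)\in E\times E$, setting $S(x_2,y_2)=\{(x,y)\in E^2:\lVert y-y_2\rVert=r\lVert x-x_2\rVert\}$ and $S'=S(x_2,y_2)\setminus\{(x_2,y_2)\}$, a $6$-tuple counted by the theorem with middle coordinates $(x_2,y_2)$ is exactly an ordered pair $\big((x_1,y_1),(x_3,y_3)\big)\in S'\times S'$ with $x_1\ne x_3$ and $y_1\ne y_3$ — the conditions $x_i\ne x_2$, $y_i\ne y_2$ being forced automatically.

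Next I would perform inclusion–exclusion on $\{x_1=x_3\}$ and $\{y_1=y_3\}$ (observing that $x_1=x_3$ forces $\lVert y_1-y_2\rVert=\lVert y_3-y_2\rVert$, and symmetrically): the number of completions of $(x_2,y_2)$ equals $h(h-1)-\sum_{u\ne0}g(x_2,u)g(y_2,ru)^2-\sum_{u\ne0}g(y_2,ru)g(x_2,u)^2$, where $h=h(x_2,y_2)=|S(x_2,y_2)|=\sum_u g(x_2,u)g(y_2,ru)$. Summing over $(x_2,y_2)\in E\times E$ and writing $T=\sum_t\nu(t)\nu(rt)$, $R(s)=\sum_{z\in E}g(z,s)^2$ and $M=\sum_{(x_2,y_2)\in E\times E}h(x_2,y_2)^2$ (and using $\sum_{z\in E}g(z,s)=\nu(s)$, $\sum_{(x_2,y_2)}h=T$), the count $\mathcal{N}$ in the theorem becomes $\mathcal{N}=M-T-\sum_{u\ne0}\big(\nu(u)R(ru)+\nu(ru)R(u)\big)$. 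Now I would estimate the pieces: Cauchy–Schwarz over the $N^2$ pairs $(x_2,y_2)$ gives $M\ge T^2/N^2$ (this is the supersaturation step: $M$ counts $6$-tuples obeying just the two dilation equations, a supply that grows quadratically once the underlying incidence count $T$ does — equivalently, the graph $G_r$ on $E\times E$ with $(x,y)\sim(x',y')\iff x\ne x'$, $\lVert y-y'\rVert=r\lVert x-x'\rVert$ has $\tfrac12(T-N^2)$ edges and hence many cherries); and $g(z,s)\le p+1$ for $s\ne0$ gives $R(s)\le (p+1)\nu(s)$ for $s\ne0$, whence $\sum_{u\ne0}\big(\nu(u)R(ru)+\nu(ru)R(u)\big)\le 2(p+1)\sum_{u\ne0}\nu(u)\nu(ru)=2(p+1)(T-N^2)$ (using $\nu(0)=N$). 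Therefore $\mathcal{N}\ge \frac{T^2}{N^2}-(2p+3)T+2(p+1)N^2$, and this quadratic in $T$ has roots $N^2$ and $2(p+1)N^2$; since $T\ge\nu(0)^2=N^2$ and in fact $T>N^2$, I conclude that $\mathcal{N}>0$ whenever $T>2(p+1)N^2$.

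It remains to prove the single-edge correlation bound $T=\sum_t\nu(t)\nu(rt)>2(p+1)N^2$ under the hypothesis $N>(\sqrt3+1)p$, which is the heart of the argument. I would obtain this by expanding $\nu$ in additive characters of $\mathbb{F}_p$ as in the proof of \eqref{containmentevencase}, or — for $r$ a nonzero square — by running the $\mathrm{O}_2(\mathbb{F}_p)$ group-action count of \cite{Pham}, \cite{RFZ2}; here $p\equiv 3\pmod 4$ is what puts the relevant Gauss and Kloosterman sums in clean form (and makes $\lVert v\rVert=0\iff v=0$). This yields a lower bound of the shape $T\ge \frac{N^4}{p}-c\,pN^2+(\text{lower order})$ for an explicit constant $c$, and comparing with $2(p+1)N^2$ one finds that $T>2(p+1)N^2$ amounts, to leading order, to $(N/p)^2-2(N/p)-2>0$, i.e.\ to $N>(\sqrt3+1)p$.

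The hard part is this last estimate. When $|E|$ is only a bounded multiple of $p$ the pointwise heuristic $\nu(t)\approx N^2/p$ fails outright — many $\nu(t)$ can vanish, e.g.\ when $E$ lies on a line — so the lower bound on $T$ must be extracted from the correlation sum as a whole, and uniformly for every $r\in\mathbb{F}_p^*$, squares and non-squares alike (the latter being needed for the even-dimensional statement \eqref{containmentevencase}). Securing the constant in the error term precisely enough to reach the threshold $(\sqrt3+1)p$, rather than some unspecified $Cp$, is exactly the refined character-sum/enumerative ingredient; by contrast the reduction and the graph-theoretic step above are soft, and one could even afford a sharper treatment of the degenerate sums $\sum_{u\ne0}\nu(u)R(ru)$ than the (wasteful, when $E$ is equidistributed) bound $R(s)\le(p+1)\nu(s)$ — but that bound already suffices once the $T$-estimate is in hand.
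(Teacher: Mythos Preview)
Your approach is essentially the paper's: the same inclusion--exclusion to remove the degeneracies $x_1=x_3$ and $y_1=y_3$, the same Cauchy--Schwarz over the middle pair $(x_2,y_2)$ (your $M\ge T^2/N^2$ is exactly Lemma~\ref{lower bound for S_2(r)} in different packaging, since $M-T=|S_2(r)|+|S_1(r)|$ and $T-N^2=|S_1(r)|$), and the same appeal to the Iosevich--Koh--Parshall bound (Lemma~\ref{lower bound for S(r)}) for the single-edge count. The one substantive difference is your treatment of the degenerate sums: you bound $R(s)=\nu_2(s,s)\le (p+1)\nu(s)$ via the sphere size $|S_s|=p+1$, whereas the paper uses the cruder $\nu_2(s,s)\le |E|\,\nu_1(s)$ (Lemma~\ref{lem2.6}). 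Your bound is genuinely sharper here --- it is the same device the paper only brings in later for $4$-cycles (Lemma~\ref{lem4.2}) --- and as a consequence your threshold condition $T>2(p+1)N^2$ is \emph{weaker} than the paper's $|S_1(r)|>2|E|^3-|E|^2$. Plugging Lemma~\ref{lower bound for S(r)} into your condition gives, to leading order, $(N/p)^2>3$, i.e.\ $N>\sqrt{3}\,p$, not the quadratic $(N/p)^2-2(N/p)-2>0$ you wrote; so your final numerics are slightly off, but harmlessly so, since the stated hypothesis $|E|>(\sqrt{3}+1)p$ comfortably implies your actual condition.
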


\begin{figure}[htbp]
\centering
\usetikzlibrary{arrows}

\definecolor{qqqqff}{rgb}{0.,0.,1.}
\definecolor{ududff}{rgb}{0.30196078431372547,0.30196078431372547,1.}

\begin{tikzpicture}[line cap=round,line join=round,>=triangle 45,x=1.0cm,y=1.0cm]

\draw [line width=1.pt,color=qqqqff] (6.002541750273515,0.48657696459304567)-- (7.973458602964373,1.3740658397813257);
\draw [line width=1.pt,color=qqqqff] (7.973458602964373,1.3740658397813257)-- (9.091464069110648,0.5096286236888452);
\draw [line width=1.pt,color=qqqqff] (3.997047408938958,0.3136895213745496)-- (3.,2.);
\draw [line width=1.pt,color=qqqqff] (0.,0.)-- (3.,2.);

\draw [fill=ududff] (0.,0.) circle (2pt);
\draw[color=black] (-0.35,0) node {\scalebox{1.1}{$x_1$}};

\draw [fill=ududff] (3.997047408938958,0.3136895213745496) circle (2.0pt);
\draw[color=black] (4.35,0.3) node {\scalebox{1.1}{$x_3$}};

\draw [fill=ududff] (6.002541750273515,0.48657696459304567) circle (2.0pt);
\draw[color=black] (5.7,0.5) node {\scalebox{1.1}{$y_1$}};

\draw [fill=ududff] (7.973458602964373,1.3740658397813257) circle (2.0pt);
\draw[color=black] (8,1.65) node {\scalebox{1.1}{$y_2$}};

\draw[color=qqqqff] (6.9,1.17) node {\scalebox{1.2}{$r\alpha$}};
\draw [fill=ududff] (9.091464069110648,0.5096286236888452) circle (2.0pt);
\draw[color=black] (9.5,0.5) node {\scalebox{1.1}{$y_3$}};
\draw[color=qqqqff] (8.8,1.2) node {\scalebox{1.2}{$r\beta$}};
\draw [fill=ududff] (3.,2.) circle (2.0pt);
\draw[color=black] (3.05,2.25) node {\scalebox{1.1}{$x_2$}};
\draw[color=qqqqff] (3.8,1.33) node {\scalebox{1.2}{$\beta$}};
\draw[color=qqqqff] (1.5,1.27) node {\scalebox{1.2}{$\alpha$}};
\end{tikzpicture}
\label{2-chains}
\caption{Pair of 2-paths with dilation ratio $r\in \mathbb{F}_p^{*}$. }
\label{2chainsfigure}
\end{figure}
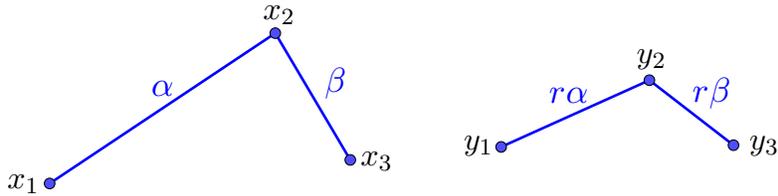

\smallskip

Once we prove this, we proceed to the case of 4-cycles.

\begin{thm}
\label{4cyclesthm}
    
    If $r\in \mathbb{F}_p^{*}$, $p$ is a prime such that $p \equiv 3 \pmod 4$ and $E\subset \mathbb{F}_p^2$ with $|E|>4\sqrt{3}p^{\frac{3}{2}}$, then
    \begin{equation}
        \label{eq1.1}
            \left|\left\{(x_1,x_2,x_3,x_4,y_1,y_2,y_3,y_4)\in E^8: \begin{array}{l}\lVert y_i-y_{i+1}\rVert=r\lVert x_i-x_{i+1}\rVert,\ i\in [3],\\
            \lVert y_4-y_{1}\rVert=r\lVert x_4-x_{1}\rVert,\\
x_i\neq x_j,\ y_i\neq y_j,\ i\neq j \end{array}\right\}\right|>0\\
    \end{equation}
\end{thm}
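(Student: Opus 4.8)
The plan is to reduce the 4-cycle problem to the 2-path problem (Theorem \ref{2chainsthm}) by a counting argument, exploiting the fact that a 4-cycle $(y_1,y_2,y_3,y_4)$ can be viewed as two 2-paths $(y_1,y_2,y_3)$ and $(y_3,y_4,y_1)$ sharing the two endpoints $y_1,y_3$, and similarly for the $x$-cycle. Concretely, a pair of dilated 4-cycles is precisely the data of two pairs of dilated 2-paths, the first with endpoints $(x_1,x_3)$ and $(y_1,y_3)$, the second with the same endpoints, that are ``glued'' along these endpoints. So I would set up a counting function recording, for each choice of the four endpoints $(x_1,x_3,y_1,y_3)\in E^4$ with $\lVert y_1-y_3\rVert = r\lVert x_1-x_3\rVert$ (note this forces no degeneracy since $p\equiv 3\pmod 4$), the number of ways to complete it to a dilated 2-path by inserting a middle vertex pair $(x_2,y_2)$. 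Call this quantity $f(x_1,x_3,y_1,y_3)$. Then the number of pairs of dilated 4-cycles (ignoring the non-degeneracy conditions for now) is essentially $\sum f(x_1,x_3,y_1,y_3)^2$, summed over admissible endpoint quadruples.

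The key steps in order: (1) Express the 4-cycle count as $\sum_{\text{endpoints}} f^2$ and relate $\sum f$ to the 2-path count from Theorem \ref{2chainsthm}, which is positive — in fact one should extract from the proof of Theorem \ref{2chainsthm} a \emph{quantitative} lower bound of the shape $c|E|^6/q^{\text{something}}$ rather than mere positivity, so that Cauchy--Schwarz in the form $\sum f^2 \geq (\sum f)^2 / N$ (with $N$ the number of admissible endpoint quadruples, bounded by $|E|^4$ or better, $|E|^4/q$ via the distance-set equidistribution) yields a genuine lower bound on the number of 4-cycles. (2) Show that the main term dominates the contributions of degenerate configurations: the tuples where some $x_i=x_j$ or $y_i=y_j$ (equivalently, by $p\equiv 3\pmod 4$, where some edge-norm vanishes) must be counted and subtracted, and one needs their number to be of strictly smaller order than the main term. (3) Combine (1) and (2) and check that the size condition $|E|>4\sqrt{3}p^{3/2}$ is exactly what makes the main term beat the error terms, yielding \eqref{eq1.1}.

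I expect the main obstacle to be step (2) — controlling the degenerate 4-cycles. The dangerous cases are where the four vertices of a cycle are not distinct (e.g.\ $y_1=y_3$, collapsing the cycle to a multigraph edge traversed back and forth, or three collinear-in-the-graph-sense coincidences), or where a diagonal relation like $\lVert x_1-x_2\rVert = 0$ sneaks in; these are not literally excluded by the $x_i\neq x_j$ hypotheses but interact with the Cauchy--Schwarz lower bound, since $\sum f^2$ also counts them. Bounding each such family will require the arithmetic structure of spheres in $\mathbb{F}_p^2$ — namely that a circle $\{z:\lVert z-a\rVert = t\}$ with $t\neq 0$ has exactly $p-\legendre{}{-1}{p}=p+1$ points (using $p\equiv 3\pmod 4$), and that two distinct such circles meet in at most $2$ points — to show each degenerate family contributes $O(|E|^6/q^{3/2})$ or smaller, strictly below the main term under the stated hypothesis. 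A secondary technical point is that Theorem \ref{2chainsthm} as stated gives only positivity; I would need to revisit its proof (the Fourier/counting estimates behind it) to harvest the explicit constant and power of $q$, so that the constant $4\sqrt{3}$ comes out correctly after the two applications of Cauchy--Schwarz and the subtraction of error terms.
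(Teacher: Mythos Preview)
Your overall strategy is the paper's: write the (possibly degenerate) 4-cycle count $|C(r)|$ as $\sum_{(x_1,x_3,y_1,y_3)\in E^4} f(x_1,x_3,y_1,y_3)^2$, apply Cauchy--Schwarz with $\sum f=|S_2(r)|$ and $N=|E|^4$ to get $|C(r)|\ge |E|^{-4}|S_2(r)|^2$ (this is the paper's Lemma~\ref{lem2.4}), then subtract the four degenerate families $x_1=x_3$, $x_2=x_4$, $y_1=y_3$, $y_2=y_4$, each bounded by $(p+1)|S_2(r)|$ via the sphere size $|S_t|=p+1$, and finally feed in the quantitative bound $|S_2(r)|>|E|^6/(9p^2)$ coming from $|S_1(r)|>|E|^4/(3p)$.

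One point to correct: drop the constraint $\lVert y_1-y_3\rVert=r\lVert x_1-x_3\rVert$ on the endpoint quadruple. The diagonal $(1,3)$ is not an edge of $C_4$, so this relation is neither required for a dilated 4-cycle nor implied by the 2-path conditions; if you restrict to such quadruples then $\sum f^2$ is no longer the 4-cycle count and the argument breaks. Sum over all of $E^4$, so $N=|E|^4$ (not $|E|^4/q$). With that fix your sketch is exactly the paper's proof; note that the degenerate contribution is of order $p\cdot|S_2(r)|\sim |E|^6/p$ (not $|E|^6/p^{3/2}$), and the threshold $|E|>4\sqrt{3}\,p^{3/2}$ is precisely what makes $|E|^{-4}|S_2(r)|^2\sim |E|^8/p^4$ dominate $4(p+1)|S_2(r)|$. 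The circle-intersection bound you mention is not needed --- only $|S_t|=p+1$ is used.
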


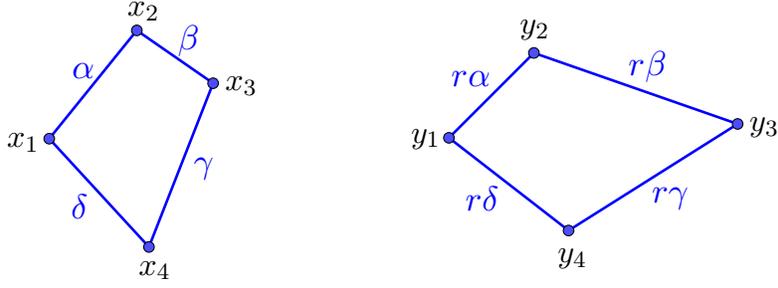
\begin{figure}[htbp]
\centering
\usetikzlibrary{arrows}

\definecolor{qqqqff}{rgb}{0.,0.,1.}

\definecolor{ududff}{rgb}{0.30196078431372547,0.30196078431372547,1.}

\begin{tikzpicture}[line cap=round,line join=round,>=triangle 45,x=1.0cm,y=1.0cm]


\draw [line width=1.pt,color=qqqqff] (-0.8899043193705356,4.739608067768051)-- (-2.0540131037084093,3.2988793742805833);
\draw [line width=1.pt,color=qqqqff] (-2.0540131037084093,3.2988793742805833)-- (-0.7285427056999393,1.8581506807931154);
\draw [line width=1.pt,color=qqqqff] (-0.7285427056999393,1.8581506807931154)-- (0.12436868084464156,4.036532465346167);
\draw [line width=1.pt,color=qqqqff] (0.12436868084464156,4.036532465346167)-- (-0.8899043193705356,4.739608067768051);
\draw [line width=1.pt,color=qqqqff] (4.388925613567546,4.439936499522657)-- (3.259394317873371,3.3104052038284832);
\draw [line width=1.pt,color=qqqqff] (3.259394317873371,3.3104052038284832)-- (4.849958795483535,2.07714144220321);
\draw [line width=1.pt,color=qqqqff] (4.849958795483535,2.07714144220321)-- (7.097495557323984,3.494818476594879);
\draw [line width=1.pt,color=qqqqff] (7.097495557323984,3.494818476594879)-- (4.388925613567546,4.439936499522657);


\draw [fill=ududff] (-0.8899043193705356,4.739608067768051) circle (2.0pt);
\draw[color=black] (-0.8,5) node {\scalebox{1.1}{$x_2$}};
\draw [fill=ududff] (-2.0540131037084093,3.2988793742805833) circle (2.0pt);
\draw[color=black] (-2.4,3.25) node {\scalebox{1.1}{$x_1$}};
\draw [fill=ududff] (-0.7285427056999393,1.8581506807931154) circle (2.0pt);
\draw[color=black] (-0.65,1.55) node {\scalebox{1.1}{$x_4$}};
\draw [fill=ududff] (0.12436868084464156,4.036532465346167) circle (2.0pt);
\draw[color=black] (0.5,4) node {\scalebox{1.1}{$x_3$}};
\draw[color=qqqqff] (-1.6,4.2) node {\scalebox{1.2}{$\alpha$}};
\draw[color=qqqqff] (-1.65,2.4) node {\scalebox{1.2}{$\delta$}};
\draw[color=qqqqff] (0,2.9) node {\scalebox{1.2}{$\gamma$}};
\draw[color=qqqqff] (-0.2,4.6) node {\scalebox{1.2}{$\beta$}};
\draw [fill=ududff] (4.388925613567546,4.439936499522657) circle (2.0pt);
\draw[color=black] (4.4,4.75) node {\scalebox{1.1}{$y_2$}};
\draw [fill=ududff] (3.259394317873371,3.3104052038284832) circle (2.0pt);
\draw[color=black] (2.95,3.3) node {\scalebox{1.1}{$y_1$}};
\draw [fill=ududff] (4.849958795483535,2.07714144220321) circle (2.0pt);
\draw[color=black] (4.9,1.7) node {\scalebox{1.1}{$y_4$}};
\draw [fill=ududff] (7.097495557323984,3.494818476594879) circle (2.0pt);
\draw[color=black] (7.45,3.45) node {\scalebox{1.1}{$y_3$}};
\draw[color=qqqqff] (3.55,4.099924527859615) node {\scalebox{1.2}{$r\alpha$}};
\draw[color=qqqqff] (3.7,2.5) node {\scalebox{1.2}{$r\delta$}};
\draw[color=qqqqff] (6.2,2.5) node {\scalebox{1.2}{$r\gamma$}};
\draw[color=qqqqff] (5.9,4.261286141530212) node {\scalebox{1.2}{$r\beta$}};
\end{tikzpicture}
\label{4-cycles}
\caption{Pair of 4-cycles with dilation ratio $r\in \mathbb{F}_p^{*}$. }
\end{figure}

The following theorem considers the case of 2-simplexes (3-cycles).

\begin{thm} 
\label{trianglesthm}

If $r\in (\mathbb{F}_p)^2\setminus \{0\}$,\ $p$ is an odd prime and $E\subset \mathbb{F}_p^2$ with $|E|\geq 3p$, then 
    \begin{equation}
        \label{eq1.2}
        \left|\left\{(x_1,x_2,x_3,y_1,y_2,y_3)\in E^6: \begin{array}{l}\lVert y_i-y_{i+1}\rVert=r\lVert 
        x_i-x_{i+1}\rVert,\ i\in [2],\\
        \lVert y_3-y_{1}\rVert=r\lVert x_3-x_{1}\rVert,\\
x_i\neq x_j,\ y_i\neq y_j,\ i\neq j \end{array}\right\}\right|>0\\
    \end{equation}
\end{thm}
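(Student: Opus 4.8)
The plan is to exploit the hypothesis that $r$ is a non-zero square. Write $r = s^2$ with $s \in \mathbb{F}_p^{*}$. For any $\theta \in \mathrm{O}_2(\mathbb{F}_p)$ and $z \in \mathbb{F}_p^2$, the affine map $x \mapsto s\theta x + z$ multiplies every value $\lVert x_i - x_j\rVert$ by exactly $r$: indeed $\theta$ preserves the form $\lVert \cdot \rVert$ and scaling by $s$ multiplies it by $s^2 = r$. Hence it suffices to produce $\theta$, $z$ and three \emph{distinct} points of $E$ whose images under $x \mapsto s\theta x + z$ again lie in $E$.

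To organise this I would use the counting function $\lambda_{r,\theta}(z) = |\{(u,v) \in E^2 : u - \sqrt{r}\,\theta v = z\}|$, noting that $u$ is determined by $v$, so $\lambda_{r,\theta}(z) = |\{v \in E : s\theta v + z \in E\}|$. The first step is the reduction: \emph{if $\lambda_{r,\theta}(z) \geq 3$ for some $\theta$ and $z$, then the quantity in \eqref{eq1.2} is positive.} Pick three distinct pairs $(u_1,v_1), (u_2,v_2), (u_3,v_3)$ with $u_i - s\theta v_i = z$; since each $u_i$ determines $v_i$ and conversely, the $v_i$ are pairwise distinct and the $u_i$ are pairwise distinct, and $\lVert u_i - u_j\rVert = \lVert s\theta(v_i - v_j)\rVert = r\lVert v_i - v_j\rVert$ for all $i,j$. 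Then $(x_1,x_2,x_3) := (v_1,v_2,v_3)$ and $(y_1,y_2,y_3) := (u_1,u_2,u_3)$ give a tuple in $E^6$ meeting every requirement in \eqref{eq1.2}. Note that this reduction sidesteps any separate handling of geometrically degenerate (e.g.\ collinear) configurations or of isotropic edges: only distinctness of vertices is needed, and that is automatic once $\lambda_{r,\theta}(z) \geq 3$.

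The second step is to find such a $(\theta, z)$. Fix any $\theta \in \mathrm{O}_2(\mathbb{F}_p)$, say $\theta = \mathrm{Id}$. For each $v \in E$ the value $z = u - s\theta v$ runs over all of $\mathbb{F}_p^2$ as $u$ varies and lands in $E$ exactly $|E|$ times, so $\sum_{z \in \mathbb{F}_p^2} \lambda_{r,\theta}(z) = |E|^2$. There are only $p^2$ possible values of $z$, so as soon as $|E|^2 > 2p^2$ the pigeonhole principle forces $\lambda_{r,\theta}(z) \geq 3$ for some $z$; and $|E| \geq 3p$ gives $|E|^2 \geq 9p^2 > 2p^2$. (The same conclusion, with a power-saving lower bound on the count in \eqref{eq1.2}, follows from applying Cauchy--Schwarz twice to the norms $\sum_z \lambda_{r,\theta}(z)$, $\sum_z \lambda_{r,\theta}(z)^2$, $\sum_z \lambda_{r,\theta}(z)^3$.)

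So for the two-dimensional statement the argument is, at bottom, the pigeonhole step above, once one notices that $r$ being a square makes pure dilations available. The genuinely hard part lies in the uniform version that underlies the $d$-simplex generalization: there one has to carry out a main-term analysis of $\sum_{\theta \in \mathrm{O}_d(\mathbb{F}_p)} \sum_z \lambda_{r,\theta}(z)^{d+1}$, extracting a term of order $|E|^{2(d+1)}/p^{\binom{d+1}{2}}$ after subtracting the degenerate contributions, which are controlled by the lower norm $\sum_{\theta}\sum_z \lambda_{r,\theta}(z)^{d}$ --- a $V(r)$-type quantity whose estimation is where the arithmetic of $\mathrm{O}_d(\mathbb{F}_p)$ really enters. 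By Witt's theorem the group acts transitively on the non-zero vectors of any prescribed norm, but the stabilizer sizes differ for anisotropic versus isotropic vectors, and the isotropic directions --- present precisely when $p \equiv 1 \pmod 4$ --- have to be isolated and bounded separately. Keeping that bookkeeping honest, rather than the positivity step, is the main obstacle in the general case; in two dimensions it is absorbed by the clean counting identity above.
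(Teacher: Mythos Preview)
Your argument is correct and in fact more elementary than the paper's. Both proofs start from the same counting function $\lambda_{r,\theta}(z)$ and the observation that three distinct pairs $(u_i,v_i)$ with common value $u_i-\sqrt{r}\,\theta v_i=z$ already yield a tuple in the target set. The paper, however, averages over all $\theta\in\mathrm{O}_2(\mathbb{F}_p)$ and bounds $\sum_{\theta,z}\lambda_{r,\theta}(z)^3$ from below via H\"older, then subtracts the degenerate contribution $3\sum_{\theta,z}\lambda_{r,\theta}(z)^2$ with a threshold-splitting trick, arriving at a quantitative estimate $|\mathcal{T}|\geq \frac{1}{|\mathrm{O}_2(\mathbb{F}_p)|}\bigl(\tfrac{|E|^6}{2p^3}-324p^3\bigr)$. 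You instead fix a single $\theta$ (the identity suffices), use $\sum_z\lambda_{r,\theta}(z)=|E|^2$ and the pigeonhole principle to force $\lambda_{r,\theta}(z)\geq 3$ once $|E|^2>2p^2$; this yields only positivity, not a count, but under a weaker hypothesis ($|E|>\sqrt{2}\,p$ rather than $|E|\geq 3p$). Your closing remarks on the $d$-simplex case are a bit off the mark: the paper's proof of Theorem~\ref{ptsconfigthm} does not extract a main term of order $|E|^{2(d+1)}/p^{\binom{d+1}{2}}$ nor invoke Witt's theorem or separate out isotropic directions; it runs exactly the same H\"older-plus-threshold argument with $d+1$ in place of $3$, and your pigeonhole shortcut generalises there just as directly (needing only $|E|^2>d\,p^d$).
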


\begin{figure}[htbp]
\centering
\usetikzlibrary{arrows}

\definecolor{qqqqff}{rgb}{0.,0.,1.}
\definecolor{ududff}{rgb}{0.30196078431372547,0.30196078431372547,1.}

\begin{tikzpicture}[line cap=round,line join=round,>=triangle 45,x=0.7cm,y=0.7cm]


\draw [line width=1.pt,color=qqqqff] (6.602090307158065,1.138309954574797)-- (11.143061728636795,0.20995705544345206);
\draw [line width=1.pt,color=qqqqff] (11.143061728636795,0.20995705544345206)-- (9.676553212222107,4.606730113868838);
\draw [line width=1.pt,color=qqqqff] (9.676553212222107,4.606730113868838)-- (6.602090307158065,1.138309954574797);
\draw [line width=1.pt,color=qqqqff] (-0.12919956920915274,0.4058961577577476)-- (3.270920147421271,0.4289478168535471);
\draw [line width=1.pt,color=qqqqff] (3.270920147421271,0.4289478168535471)-- (1.5508969667297192,3.3620120378159415);
\draw [line width=1.pt,color=qqqqff] (1.5508969667297192,3.3620120378159415)-- (-0.12919956920915274,0.4058961577577476);

\draw [fill=ududff] (6.602090307158065,1.138309954574797) circle (2.0pt);
\draw[color=black] (6.1,1.1) node {\scalebox{1.1}{$y_1$}};
\draw [fill=ududff] (11.143061728636795,0.20995705544345206) circle (2.0pt);
\draw[color=black] (11.7,0.15) node {\scalebox{1.1}{$y_3$}};
\draw[color=qqqqff] (8.8,0.2) node {\scalebox{1.2}{$r\gamma$}};
\draw[color=qqqqff] (10.9,2.5669891979889496) node {\scalebox{1.2}{$r\beta$}};
\draw[color=qqqqff] (7.7,3.1) node {\scalebox{1.2}{$r\alpha$}};
\draw [fill=ududff] (9.676553212222107,4.606730113868838) circle (2.0pt);
\draw[color=black] (9.75,5.05) node {\scalebox{1.1}{$y_2$}};
\draw [fill=ududff] (-0.12919956920915274,0.4058961577577476) circle (2.0pt);
\draw[color=black] (-0.6,0.35) node {\scalebox{1.1}{$x_1$}};
\draw [fill=ududff] (3.270920147421271,0.4289478168535471) circle (2.0pt);
\draw[color=black] (3.8,0.4) node {\scalebox{1.1}{$x_3$}};
\draw[color=qqqqff] (1.65,0) node {\scalebox{1.2}{$\gamma$}};
\draw[color=qqqqff] (2.7,2.1) node {\scalebox{1.2}{$\beta$}};
\draw[color=qqqqff] (0.4,2.1) node {\scalebox{1.2}{$\alpha$}};
\draw [fill=ududff] (1.5508969667297192,3.3620120378159415) circle (2.0pt);
\draw[color=black] (1.67,3.7) node {\scalebox{1.1}{$x_2$}};
\end{tikzpicture}
\label{3-cycles}
\caption{Pair of 2-simplexes (3-cycles) with dilation ratio $r\in (\mathbb{F}_p)^2\setminus \{0\}$. }
\end{figure}
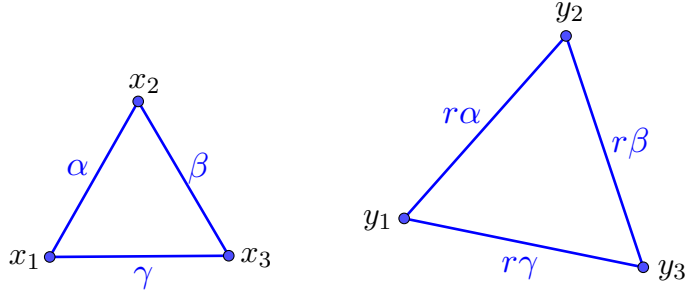

The following theorem generalizes the Theorem \ref{trianglesthm} to higher dimensions. For example, it tells us that if $E\subset \mathbb{F}_p^3$ such that $|E|\gg p^{\frac{3}{2}}$, then there exists a pair of 3-simplexes (tetrahedrons) in $E$ with dilation ratio $r\in (\mathbb{F}_p)^2\setminus \{0\}$.

\begin{figure}[htbp]
\centering
\usetikzlibrary{arrows}

\definecolor{qqqqff}{rgb}{0.,0.,1.}

\definecolor{ududff}{rgb}{0.30196078431372547,0.30196078431372547,1.}
\begin{tikzpicture}[scale=0.6][line cap=round,line join=round,>=triangle 45,x=1.0cm,y=1.0cm]
\clip(3.,-4.5) rectangle (26.,5.3);
\draw [line width=1.pt,dash pattern=on 3pt off 3pt,color=qqqqff] (4.92,-0.54)-- (13.86,-1.);
\draw [line width=1.pt,color=qqqqff] (13.86,-1.)-- (11.88,-3.42);
\draw [line width=1.pt,color=qqqqff] (11.88,-3.42)-- (4.92,-0.54);
\draw [line width=1.pt,color=qqqqff] (11.92,4.02)-- (4.92,-0.54);
\draw [line width=1.pt,color=qqqqff] (4.92,-0.54)-- (11.88,-3.42);
\draw [line width=1.pt,color=qqqqff] (11.88,-3.42)-- (13.86,-1.);
\draw [line width=1.pt,color=qqqqff] (13.86,-1.)-- (11.92,4.02);
\draw [line width=1.pt,color=qqqqff] (11.92,4.02)-- (11.88,-3.42);
\draw [line width=1.pt,color=qqqqff] (11.88,-3.42)-- (13.86,-1.);
\draw [line width=1.pt,color=qqqqff] (13.86,-1.)-- (11.92,4.02);

\draw [line width=1.pt,color=qqqqff] (18.28,-1.42)-- (21.06,-3.4);
\draw [line width=1.pt,color=qqqqff] (21.06,-3.4)-- (23.62,-1.72);
\draw [line width=1.pt,dash pattern=on 3pt off 3pt,color=qqqqff] (23.62,-1.72)-- (18.28,-1.42);
\draw [line width=1.pt,color=qqqqff] (21.04,3.36)-- (18.28,-1.42);
\draw [line width=1.pt,color=qqqqff] (18.28,-1.42)-- (21.06,-3.4);
\draw [line width=1.pt,color=qqqqff] (21.06,-3.4)-- (21.04,3.36);
\draw [line width=1.pt,color=qqqqff] (21.04,3.36)-- (23.62,-1.72);
\draw [line width=1.pt,color=qqqqff] (23.62,-1.72)-- (21.06,-3.4);
\draw [line width=1.pt,color=qqqqff] (21.06,-3.4)-- (21.04,3.36);

\draw [fill=ududff] (4.92,-0.54) circle (3.5pt);
\draw[color=black] (4.3,-0.5) node { \scalebox{1.2} {$x_1$} };
\draw [fill=ududff] (13.86,-1.) circle (3.5pt);
\draw[color=black] (14.5,-1) node { \scalebox{1.2} {$x_3$} };
\draw [fill=ududff] (11.88,-3.42) circle (3.5pt);
\draw[color=black] (11.8,-4) node { \scalebox{1.2} {$x_4$} };
\draw [fill=ududff] (11.92,4.02) circle (3.5pt);
\draw[color=black] (12,4.5) node { \scalebox{1.2} {$x_2$} };
\draw [fill=ududff] (18.28,-1.42) circle (3.5pt);
\draw[color=black] (17.7,-1.4) node { \scalebox{1.2} {$y_1$} };
\draw [fill=ududff] (21.06,-3.4) circle (3.5pt);
\draw[color=black] (21,-4) node { \scalebox{1.2} {$y_4$} };
\draw [fill=ududff] (23.62,-1.72) circle (3.5pt);
\draw[color=black] (24.3,-1.8) node { \scalebox{1.2} {$y_3$} };
\draw [fill=ududff] (21.04,3.36) circle (3.5pt);
\draw[color=black] (21.2,4) node { \scalebox{1.2} {$y_2$} };

\draw[color=qqqqff] (8,2.2) node {\scalebox{1.1}{$t_{12}$}};
\draw[color=qqqqff] (10,-0.3) node {\scalebox{1.1}{$t_{13}$}};
\draw[color=qqqqff] (9,-2.7) node {\scalebox{1.1}{$t_{14}$}};
\draw[color=qqqqff] (13.3,2.2) node {\scalebox{1.1}{$t_{23}$}};
\draw[color=qqqqff] (11.4,1.1) node {\scalebox{1.1}{$t_{24}$}};
\draw[color=qqqqff] (13.3,-2.7) node {\scalebox{1.1}{$t_{34}$}};

\draw[color=qqqqff] (19.2,1.7) node {\scalebox{1.1}{$rt_{12}$}};
\draw[color=qqqqff] (20.2,-1) node {\scalebox{1.1}{$rt_{13}$}};
\draw[color=qqqqff] (19.2,-2.7) node {\scalebox{1.1}{$rt_{14}$}};
\draw[color=qqqqff] (22.8,1.7) node {\scalebox{1.1}{$rt_{23}$}};
\draw[color=qqqqff] (21.8,0) node {\scalebox{1.1}{$rt_{24}$}};
\draw[color=qqqqff] (23,-2.9) node {\scalebox{1.1}{$rt_{34}$}};

\end{tikzpicture}

\caption{Pair of tetrahedrons with dilation ratio $r\in (\mathbb{F}_p)^2\setminus \{0\}$.}
\end{figure}
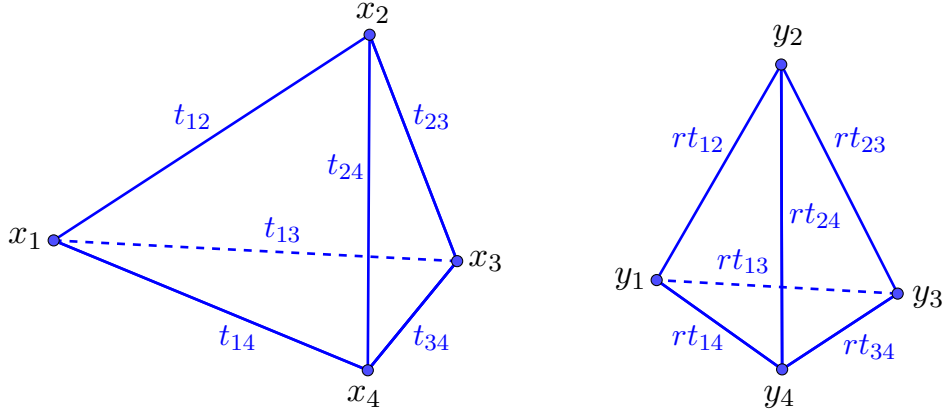

\begin{thm}
\label{ptsconfigthm}
If $r\in (\mathbb{F}_p)^2\setminus \{0\}$, $p$ is an odd prime and $E\subset \mathbb{F}_p^d$ such that $|E|\geq (d+1)p^{\frac{d}{2}}$, then 
    \begin{equation*}
            \left|\left\{(x_1,\dots,x_{d+1},y_1,\dots,y_{d+1})\in E^{2d+2}: \begin{array}{l}\lVert y_i-y_{j}\rVert=r\lVert x_i-x_{j}\rVert,\ i,j\in [d+1],\\
x_i\neq x_j,\ y_i\neq y_j,\ i\neq j \end{array}\right\}\right|>0.\\
    \end{equation*}
\end{thm}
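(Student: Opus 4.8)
Since $r$ is a nonzero square, fix $s\in\mathbb{F}_p^{*}$ with $s^{2}=r$, and for $\theta\in\mathrm{O}_d(\mathbb{F}_p)$ and $z\in\mathbb{F}_p^d$ consider $\lambda_{r,\theta}(z)=|\{(u,v)\in E^2:u-s\theta v=z\}|$. The first step is the observation that \emph{it is enough to find a single pair $(\theta,z)$ with $\lambda_{r,\theta}(z)\ge d+1$}. Indeed, the set $\{x\in E:s\theta x+z\in E\}$ has exactly $\lambda_{r,\theta}(z)\ge d+1$ elements; choosing distinct $x_1,\dots,x_{d+1}$ in it and setting $y_i=s\theta x_i+z\in E$, we get $y_i-y_j=s\theta(x_i-x_j)$, so, as $\theta$ preserves the form $\alpha\mapsto\alpha_1^2+\dots+\alpha_d^2$,
\[
\lVert y_i-y_j\rVert=s^{2}\lVert\theta(x_i-x_j)\rVert=r\lVert x_i-x_j\rVert\qquad\text{for all }i,j,
\]
and the $y_i$ are pairwise distinct because $s\theta$ is invertible. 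Thus $(x_1,\dots,x_{d+1},y_1,\dots,y_{d+1})$ is precisely a pair of $d$-simplexes with dilation ratio $r$. This route sidesteps the isotropy issues responsible for the hypothesis $p\equiv 3\pmod 4$ in Theorems \ref{2chainsthm} and \ref{4cyclesthm}: the non-degeneracy $y_i\ne y_j$ comes for free, with no need to know that $\lVert\alpha\rVert=0$ forces $\alpha=0$.

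\textbf{Producing a large value of $\lambda_{r,\theta}$.} For each fixed $\theta$, $\sum_{z\in\mathbb{F}_p^d}\lambda_{r,\theta}(z)=|E|^2$ since every $(u,v)\in E^2$ lands in exactly one $z$. Summing over $\mathrm{O}_d(\mathbb{F}_p)$ and comparing with the number $|\mathrm{O}_d(\mathbb{F}_p)|\cdot p^d$ of pairs $(\theta,z)$: if $\lambda_{r,\theta}(z)\le d$ for all $(\theta,z)$, then $|\mathrm{O}_d(\mathbb{F}_p)|\,|E|^2\le d\,|\mathrm{O}_d(\mathbb{F}_p)|\,p^d$, i.e.\ $|E|\le\sqrt{d}\,p^{d/2}$. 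So $|E|>\sqrt{d}\,p^{d/2}$ already forces some $\lambda_{r,\theta}(z)\ge d+1$, and the theorem follows; in particular $|E|\ge(d+1)p^{d/2}$ is more than enough, and — notably — the argument uses nothing about $\mathrm{O}_d(\mathbb{F}_p)$ beyond its being nonempty (the factor $|\mathrm{O}_d(\mathbb{F}_p)|$ cancels).

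\textbf{The quantitative form.} Because the introduction flags the $L^{d}$- and $L^{d+1}$-norms of $\lambda_{r,\theta}$, the proof presumably instead gives a count with room to spare (as in Pham's Theorem \ref{Phthm}). One counts tuples $(\theta,z,\vec x,\vec y)$ with $y_i=s\theta x_i+z$ and the $x_i$ — hence the $y_i$ — pairwise distinct; this number is $\sum_\theta\sum_z[\lambda_{r,\theta}(z)]^{\underline{d+1}}\ge\widetilde N-\binom{d+1}{2}\Lambda$, where $\widetilde N=\sum_\theta\sum_z\lambda_{r,\theta}(z)^{d+1}$, $\Lambda=\sum_\theta\sum_z\lambda_{r,\theta}(z)^{d}$, and $[\ell]^{\underline{d+1}}\ge\ell^{d+1}-\binom{d+1}{2}\ell^d$. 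Hölder applied to $z\mapsto\lambda_{r,\theta}(z)$ (supported on $\le p^d$ points, summing to $|E|^2$) gives $\sum_z\lambda_{r,\theta}(z)^{d+1}\ge|E|^{2d+2}/p^{d^2}$, so with $|\mathrm{O}_d(\mathbb{F}_p)|\asymp p^{d(d-1)/2}$ one gets $\widetilde N\gg_d|E|^{2d+2}/p^{d(d+1)/2}$; while $\Lambda\le\widetilde N^{\,d/(d+1)}(|\mathrm{O}_d(\mathbb{F}_p)|p^d)^{1/(d+1)}$ by Hölder between the $L^{d+1}$- and $L^1$-norms (alternatively, bound $\Lambda$ by stratifying the sum by the affine dimension of $\{x_i\}$ and invoking orbit–stabilizer for $\mathrm{O}_d(\mathbb{F}_p)$ on spheres). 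Comparing $\widetilde N$ with $\binom{d+1}{2}\Lambda$ forces $|E|>\sqrt{d(d+1)/2}\;p^{d/2}$, which is $\le(d+1)p^{d/2}$ and matches $d=2$ (Theorem \ref{trianglesthm}).

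\textbf{Main obstacle.} For the qualitative statement as written there is essentially no obstacle — the entire idea is that one orthogonal $\theta$ realizes all $\binom{d+1}{2}$ dilated distances at once and keeps vertices distinct, and a one-line average does the rest. The delicate point is the upper bound on $\Lambda=\sum_\theta\sum_z\lambda_{r,\theta}(z)^{d}$ in the quantitative version: the most collapsed configurations (such as $x_1=\dots=x_{d+1}$) are counted with multiplicity up to $|\mathrm{O}_d(\mathbb{F}_p)|\approx p^{d(d-1)/2}$, so the margin has to be recovered by weighing these multiplicities against the scarcity of low-dimensional configurations, which is where the arithmetic of the orthogonal group and standard $L^2$-energy bounds for the distance function come in.
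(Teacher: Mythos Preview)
Your proof is correct, and the qualitative argument in your first two paragraphs is genuinely simpler than what the paper does. The paper never isolates the observation that a single $(\theta,z)$ with $\lambda_{r,\theta}(z)\ge d+1$ suffices; instead it goes straight to the quantitative route you sketch in your third paragraph, bounding $|\mathcal{P}|$ from below by $\frac{1}{|\mathrm{O}_d(\mathbb{F}_p)|}\bigl(\|\lambda\|_{d+1}^{d+1}-\binom{d+1}{2}\|\lambda\|_d^d\bigr)$ via Bonferroni, exactly as you write. Your one-line pigeonhole gives the theorem with the better threshold $\sqrt{d}\,p^{d/2}$ and, as you note, uses nothing about $\mathrm{O}_d$ beyond nonemptiness---one could take $\theta=I$ throughout.

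Where the two diverge is in handling the $L^d$-term $\Lambda$ in the quantitative version. You interpolate by H\"older between $L^{d+1}$ and $L^1$, arriving at $\Lambda\le\widetilde N^{\,d/(d+1)}(|\mathrm{O}_d|p^d)^{1/(d+1)}$ and hence the threshold $\sqrt{d(d+1)/2}\,p^{d/2}$. The paper instead splits $\|\lambda\|_{d+1}^{d+1}-\binom{d+1}{2}\|\lambda\|_d^d$ at the level set $\{\lambda\ge d(d+1)\}$: on that set the difference is pointwise nonnegative, while on the complement $\lambda^d<(d(d+1))^d$ is bounded trivially by a constant times $|\mathrm{O}_d|\,p^d$. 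This yields the slightly worse constant $\sqrt{d(d+1)}$, which the paper then relaxes to $d+1$. Your H\"older interpolation is cleaner and gives a sharper constant; the paper's level-set split is more elementary but loses a factor of $\sqrt{2}$. Either way the ``main obstacle'' you flag---the contribution of collapsed configurations to $\Lambda$---is handled by brute force rather than by any genuine arithmetic of $\mathrm{O}_d(\mathbb{F}_p)$, so your instinct that nothing deep is needed here is vindicated.
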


\medskip

The following theorem gives the lower bound for the number of paths of length $k\geq 3$ in graphs in terms of edge density $e(G)/\binom{\nu(G)}{2}$. This result has been established in 1959 by Mulholland and Smith (see ~\cite{MR110721}).

\begin{thm}
\label{3path}
If $G=(V,E)$ is a simple graph with $n$ vertices, then
\begin{equation*}
    \left|\left\{(v_1,v_2,\dots,v_{k+1})\in V^{k+1}:v_iv_{i+1}\in E,\ i\in [k]\right\}\right|\geq \frac{(2e(G))^k}{n^{k-1}},
\end{equation*}
where $e(G)$ is the number of edges of graph $G$.
\end{thm}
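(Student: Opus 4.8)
The claim is the Blakley--Roy / Mulholland--Smith inequality applied to the adjacency matrix $A$ of $G$ and the all-ones vector $\mathbf 1$: writing $W_k := \mathbf 1^{\top}A^k\mathbf 1$ for the number of walks of length $k$ in $G$, and noting $\mathbf 1^{\top}A\mathbf 1 = 2e(G)$ and $\mathbf 1^{\top}\mathbf 1 = n$, the assertion is $W_k\,n^{k-1}\ge (2e(G))^k$. First I would reduce to connected $G$: deleting isolated vertices leaves $W_k$ and $e(G)$ unchanged while decreasing $n$, which only enlarges the right-hand side, so we may assume $G$ has no isolated vertices; and since $(a,b)\mapsto a^k b^{1-k}$ is the perspective of the convex function $t\mapsto t^k$ (for $k\ge1$), hence jointly convex on $(0,\infty)^2$, applying Jensen's inequality with the pairs $(2e(C),n(C))$ over the connected components $C$ shows that the bound for each component implies the bound for $G$. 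So assume $G$ is connected with at least one edge.

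For even $k=2m$ the inequality is straightforward. Let $f_m(v)$ denote the number of walks of length $m$ starting at $v$; reversing a walk is a bijection, so this equals the number of walks of length $m$ ending at $v$, and splitting a walk of length $2m$ at its midpoint gives $W_{2m}=\sum_v f_m(v)^2$. Cauchy--Schwarz then yields $W_{2m}\ge \frac1n\big(\sum_v f_m(v)\big)^2 = W_m^2/n$, and iterating this together with the base case $W_1=2e(G)$ settles the claim when $k$ is a power of $2$; in general it reduces the case of even $k$ to the case $k/2$. Thus the real content lies in the odd case.

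For odd $k=2m+1\ge3$ the midpoint Cauchy--Schwarz only gives $W_{2m+1}=\langle f_m, A f_m\rangle\le \sqrt{W_{2m}W_{2m+2}}$, an \emph{upper} bound, and one cannot bootstrap from the even case: the tempting inductive step ``$W_j\ge\frac{2e(G)}{n}\,W_{j-1}$'' is already false for $j=3$ on suitable connected graphs (for instance a large star one of whose leaves is joined to a small clique). What is genuinely required is a lower bound on $\langle f_m, A f_m\rangle=\sum_{u\sim v}f_m(u)f_m(v)$ that exploits two facts specific to this setting: the entries of $A$ (and hence of $f_m = A^m\mathbf 1$) are nonnegative, and, $G$ being connected, $f_m$ is strictly positive and dominated by the Perron eigenvector of $A$, whose eigenvalue $\rho(G)$ is at least the average degree $2e(G)/n$. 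Spectrally, if $W_k/n=\sum_i\lambda_i^k c_i$ with $c_i\ge0$, $\sum_i c_i=1$ and $c_1>0$ at $\lambda_1=\rho(G)$, the target is $\bE[\lambda^k]\ge(\bE\lambda)^k$ for a random eigenvalue $\lambda$; for even $k$ this is Jensen (the map $t\mapsto t^k$ is convex on $\bR$), but for odd $k$ it truly uses that the pair $(\lambda_i,c_i)$ comes from a graph. Carrying out this step is precisely the 1959 Mulholland--Smith theorem cited in the statement; I would either invoke it as a black box, or reprove it by their more delicate induction, which establishes the stronger inequality $(x^{\top}x)^{k-1}(x^{\top}A^k x)\ge (x^{\top}Ax)^k$ for every nonnegative vector $x$. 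The main obstacle is exactly this odd case: the reductions and the even case are routine Cauchy--Schwarz, whereas the odd case is the reason the statement is a theorem and not an exercise.
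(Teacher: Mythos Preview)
The paper does not actually prove Theorem~\ref{3path}: it is stated with the attribution ``This result has been established in 1959 by Mulholland and Smith'' and the citation~\cite{MR110721}, and is then used as a black box in Section~\ref{sec:7}. There is thus no proof in the paper to compare your proposal against.

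Your write-up is therefore strictly more informative than what the paper provides. The reductions (deleting isolated vertices; splitting into connected components via superadditivity of the degree-one-homogeneous convex function $(a,b)\mapsto a^k/b^{k-1}$) and the Cauchy--Schwarz step $W_{2m}\ge W_m^2/n$ are correct and useful context. You are also right that the odd-$k$ case is where the real content lies, that the na\"{\i}ve pointwise inequality $W_j\ge (2e/n)\,W_{j-1}$ can fail, and that one genuinely needs the Blakley--Roy/Mulholland--Smith inequality $(x^\top x)^{k-1}(x^\top A^k x)\ge (x^\top A x)^k$ for nonnegative $x$. At that point you, like the paper, invoke the cited 1959 result rather than reproving it. So your treatment and the paper's agree on the essential point --- the theorem is quoted from the literature --- with your version supplying more of the surrounding architecture.
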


Generalization of this inequality for bipartite graphs is called Sidorenko's conjecture. This conjecture still remains open but it has been proved for certain family of bipartite graphs such as paths, trees, cycles of even length, complete bipartite graphs, etc. 
Li and Szegedy in 2011  (see ~\cite{LiSz}) introduced the idea of using entropy to prove some cases of Sidorenko's conjecture. Later, Szegedy (see ~\cite{Sz}) applied these ideas to prove that an even wider class of bipartite graphs have Sidorenko's property. 

\smallskip

Using Theorem \ref{3path} we obtain the lower bound for the number of all pairs of $k$-paths ($k\geq 3$) in $E$ with dilation ratio $r\in \mathbb{F}_p^{*}$. We would like to notice that here by all we mean degenerate and nondegenerate pairs.

\begin{thm}
\label{lowerboundS_3(r)}
If $r\in \mathbb{F}_p^{*}$, $p$ is a prime such that $p \equiv 3 \pmod 4$ and $E\subset \mathbb{F}_p^2$ with $|E|>2p$, then
    \begin{equation*}
            \left|\left\{(x_1,\dots,x_{k+1},y_1,\dots,y_{k+1})\in E^{2k+2}: \begin{array}{l}\lVert y_i-y_{i+1}\rVert=r\lVert x_i-x_{i+1}\rVert,\\
x_i\neq x_{i+1},\ i\in [k] \end{array}\right\}\right|> \frac{|E|^{2k+2}}{(3p)^k}.\\
    \end{equation*}
\end{thm}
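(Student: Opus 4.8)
\emph{Sketch of the approach.} The plan is to realise a pair of $k$-paths with dilation ratio $r$ as an ordered walk of length $k$ in an auxiliary graph and then invoke Theorem~\ref{3path}.

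I would introduce the graph $G$ on the vertex set $V:=E\times E$, joining $(x,y)$ to $(x',y')$ exactly when $x\neq x'$ and $\lVert y-y'\rVert=r\lVert x-x'\rVert$. Since $p\equiv 3\pmod 4$, in $\mathbb{F}_p^2$ we have $\lVert z\rVert=0$ if and only if $z=0$; hence the condition $x\neq x'$ is the same as $\lVert x-x'\rVert\neq 0$, which forces $\lVert y-y'\rVert=r\lVert x-x'\rVert\neq 0$ and so $y\neq y'$ automatically. In particular $(x,y)\neq(x',y')$ on every edge, so $G$ has no loops, and the relation is symmetric because $\lVert\cdot\rVert$ is even; thus $G$ is a simple graph with $|V|=|E|^2$. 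Unwinding the definitions, a tuple $\big((x_1,y_1),\dots,(x_{k+1},y_{k+1})\big)\in V^{k+1}$ is a walk of length $k$ in $G$ precisely when $x_i\neq x_{i+1}$ and $\lVert y_i-y_{i+1}\rVert=r\lVert x_i-x_{i+1}\rVert$ for all $i\in[k]$, so the quantity in the theorem equals the number of walks of length $k$ in $G$. By Theorem~\ref{3path} this number is at least $(2e(G))^k/|V|^{k-1}=(2e(G))^k/|E|^{2k-2}$. Therefore it suffices to establish the single inequality
\[
2e(G)\;>\;\frac{|E|^4}{3p},
\]
because then $(2e(G))^k/|E|^{2k-2}>|E|^{4k}/\big((3p)^k|E|^{2k-2}\big)=|E|^{2k+2}/(3p)^k$, as required.

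It remains to bound $2e(G)$ from below. Writing $\nu(t):=|\{(a,b)\in E^2:\lVert a-b\rVert=t\}|$ and counting ordered adjacent pairs of vertices, then grouping them by the common value $t=\lVert x-x'\rVert$ (necessarily $\neq 0$), we get
\[
2e(G)=\big|\{(x,y,x',y')\in E^4:\ x\neq x',\ \lVert y-y'\rVert=r\lVert x-x'\rVert\}\big|=\sum_{t\in\mathbb{F}_p^{*}}\nu(t)\nu(rt).
\]
(Equivalently, $2e(G)=V(r)$ in the notation of \cite{Pham}.) Using $\sum_t\nu(t)=|E|^2$ and $\nu(0)=|E|$ (again because $p\equiv 3\pmod 4$), one has $\sum_{t\neq 0}\nu(t)\nu(rt)=\sum_t\nu(t)\nu(rt)-|E|^2$, and the last sum is estimated by discrete Fourier analysis on $\mathbb{F}_p$ and $\mathbb{F}_p^2$. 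Expanding the indicator of $\lVert z\rVert=t$ in additive characters, $\nu(t)=|E|^2/p+R(t)$ with $\sum_t R(t)=0$, so $\sum_t\nu(t)\nu(rt)=|E|^4/p+\sum_t R(t)R(rt)$; since $t\mapsto rt$ permutes $\mathbb{F}_p$, Cauchy--Schwarz gives $|\sum_t R(t)R(rt)|\le\sum_t R(t)^2=\sum_t\nu(t)^2-|E|^4/p$, and the Gauss-sum evaluation (here $p\equiv 3\pmod 4$ makes the two-dimensional Gauss sum equal to $-p$ and leaves no isotropic directions) yields the Iosevich--Rudnev-type bound $\sum_t\nu(t)^2\le |E|^4/p+c\,p|E|^2$ for a suitable absolute constant $c$. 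Combining, $\sum_{t\neq 0}\nu(t)\nu(rt)\ge |E|^4/p-|E|^2-c\,p|E|^2$, which exceeds $|E|^4/(3p)$ once $|E|>2p$.

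The combinatorial reduction (the graph $G$ together with Theorem~\ref{3path}) is entirely elementary; the real work — and the main obstacle — is the last step, namely pinning down the constant in the lower bound for $\sum_{t\neq 0}\nu(t)\nu(rt)$ tightly enough to reach the clean threshold $|E|>2p$. A bare triangle inequality on the error term is too wasteful (configurations like a union of parallel lines already show it is close to extremal), so one must exploit the oscillation coming from the Gauss sums, equivalently the fact that the Fourier mass of $E$ cannot concentrate on a single sphere in $\mathbb{F}_p^2$. This sharp lower bound for $V(r)=\sum_{t\neq 0}\nu(t)\nu(rt)$ — the same estimate (refining the bounds of \cite{Pham} and \cite{RFZ2}) that underlies Theorem~\ref{2chainsthm} — is what does the heavy lifting; once it is in hand, the present theorem is immediate from Theorem~\ref{3path}.
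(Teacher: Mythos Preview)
Your proposal is correct and follows essentially the same route as the paper: build the auxiliary graph on $E\times E$, identify length-$k$ walks with elements of $S_k(r)$, apply Theorem~\ref{3path} to reduce everything to the single inequality $2e(G)=|S_1(r)|>|E|^4/(3p)$, and then feed in the known lower bound for $|S_1(r)|$. The only difference is that you sketch a Fourier argument for this last step and flag the constant as the main obstacle, whereas the paper simply invokes Lemma~\ref{lower bound for S(r)} (imported from \cite{MR3959878}) and verifies by direct computation that its explicit bound yields $|S_1(r)|>|E|^4/(3p)$ once $|E|>2p$; so the step you highlight as delicate is already available in closed form and requires no new analysis.
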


\smallskip

\begin{acknowledgement}
	This paper is a part of the author's Ph.D. thesis. The author would like to thank his advisors Kaave Hosseini and Alexander Iosevich for suggesting the interesting problem and for being a constant source of advice and encouragement.
\end{acknowledgement}  

\bigskip

\section{Preliminaries}
\label{sec:2}

We recall some definitions. Let $\mathbb{F}_p$ be the finite field of order $p$  and $\mathbb{F}_p^d$ be a $d$-dimensional vector space over $\mathbb{F}_p$. 

\smallskip

Let $\mathrm{O}_d(\mathbb{F}_p)$ denote the group of orthogonal $d\times d$ matrices with entries in $\mathbb{F}_p$.

\smallskip

Let $\mathrm{SO}_d(\mathbb{F}_p)$ denote the subgroup of the elements of $\mathrm{O}_d(\mathbb{F}_p)$ with determinant $1$.

\smallskip

Let $\mathbb{F}_p^{*}$ denote the set of nonzero elements in $\mathbb{F}_p$.

\smallskip

Let $(\mathbb{F}_p)^2$ denote the set of quadratic residues in $\mathbb{F}_p$.

\smallskip

Consider the map $\lVert \cdot \rVert:\mathbb{F}_p^d\to \mathbb{F}_p$ defined by $\lVert {\alpha}\rVert:=\alpha_1^2+\dots+\alpha_d^2$, where $\alpha=(\alpha_1,\dots,\alpha_d)$. 

This mapping is not a norm since we do not impose any metric structure on $\mathbb{F}_p^d$, but it does share the important feature of the Euclidean norm: it is invariant under orthogonal matrices.

\smallskip

If $t\in \mathbb{F}_p$, then let $S_t$ denote the sphere of radius $t$ in $\mathbb{F}_p^d$: thus $S_t=\{x\in \mathbb{F}_p^d: \lVert x\rVert=t\}.$

\smallskip

If $X$ is a finite set, then let $|X|$ denote the size (the cardinality) of $X$.

\smallskip

If $n\in \mathbb{N}$, then $[n]$ denote the set $\{1,\dots,n\}$.

\smallskip

For any two sets $A$ and $B$, let $A\sqcup B$ denote the disjoint union of $A$ and $B$.

\smallskip

We write $X\gg Y$ to mean that there is some constant $C>0$ so that $X\geq CY$.

\smallskip

We use $X\gg_d Y$ as shorthand for the inequality $X\geq C_dY$ for some constant $C_d>0$ depending only on $d$.

\smallskip

If $A\subset \mathbb{F}_p^d$, then let $\mathds{1}_A(x)$ to denote the indicator function of $A$: thus $$\mathds{1}_A(x)=
\begin{cases}
1, & \text{if }x\in A \\
0, & \text{if }x\notin A.
\end{cases}$$  

\smallskip

If $E\subset \mathbb{F}_p^2$ and $t_1,\dots,t_k\in \mathbb{F}_p$ with $k\geq 1$, then define the following incidence function: 
\begin{equation*}
    \nu_k(t_1,\dots,t_k):=\left|\left\{(x_1,\dots,x_{k+1})\in E^{k+1}: \lVert x_i-x_{i+1}\rVert=t_i,\ i\in [k]\right\}\right|.
\end{equation*}

\smallskip

Basically, this function counts the number of walks of length $k$ with step length $t_1,t_2,\dots,t_k$ in the complete graph $K_{|E|}$ with vertex set $E$.

\medskip

Likewise, if $E\subset \mathbb{F}_p^2$ and $t_1,t_2,t_3,t_4\in \mathbb{F}_p$, then define the following incidence function: 
\begin{equation*}
    \mu(t_1,t_2,t_3,t_4):=\left|\left\{(x_1,x_2,x_3,x_4)\in E^{4}: \begin{array}{l}\lVert x_1-x_{2}\rVert=t_1,\ \lVert x_2-x_{3}\rVert=t_2,\\
\lVert x_3-x_{4}\rVert=t_3,\ \lVert x_4-x_{1}\rVert=t_4 \end{array}\right\}\right|.\\
\end{equation*}

This function counts the number of closed walks of length 4 with step length $t_1,t_2,t_3,t_4$ in the complete graph $K_{|E|}$ with vertex set $E$.

\medskip

For $r\in \mathbb{F}_p^{*}$ and $k\geq 1$ we will consider the following sums:

\begin{equation*}
    \sum \limits_{t_1,\dots,t_k\in \mathbb{F}_p^*}\nu_{k}(t_1,\dots,t_k)\nu_{k}(rt_1,\dots,rt_k),
\end{equation*}
\begin{equation*}
    \sum \limits_{t_1,\dots,t_4\in \mathbb{F}_p^*}\mu(t_1,t_2,t_3,t_4)\mu(rt_1,rt_2,rt_3,rt_4).
\end{equation*}

\

Moreover, for each $k\geq 1$ we will consider the following sets:
\begin{equation*}
   S_k(r):=\left\{(x_1,\dots,x_{k+1},y_1,\dots,y_{k+1})\in E^{2k+2}: \lVert y_i-y_{i+1}\rVert=r\lVert x_i-x_{i+1}\rVert,\ x_i\neq x_{i+1},\ i\in [k] \right\},
\end{equation*}

\

\begin{equation*}
    C(r):=\left\{(x_1,x_2,x_3,x_4,y_1,y_2,y_3,y_4)\in E^8: \begin{array}{l}\lVert y_i-y_{i+1}\rVert=r\lVert x_i-x_{i+1}\rVert,\ x_i\neq x_{i+1},\ i\in [3], \\
\lVert y_4-y_{1}\rVert=r\lVert x_4-x_{1}\rVert,\ x_4\neq x_{1} \end{array}\right\}.\\
\end{equation*}

\

One can show that the above sums are equal to the size of the defined  sets, i.e.

\medskip

\begin{equation*}
    \sum \limits_{t_1,\dots,t_k\in \mathbb{F}_p^*}\nu_{k}(t_1,\dots,t_k)\nu_{k}(rt_1,\dots,rt_k)= |S_{k}(r)|,
\end{equation*}

\smallskip

\begin{equation*}
    \sum \limits_{t_1,t_2,t_3,t_4\in \mathbb{F}_p^*}\mu(t_1,t_2,t_3,t_4)\mu(rt_1,rt_2,rt_3,rt_4)= |C(r)|.
\end{equation*}

We need the estimates for the size of the sets $S_1(r)$, $S_{2}(r)$ and $C(r)$, which are given in Lemma \ref{lower bound for S(r)}, \ref{lower bound for S_2(r)} and \ref{lem2.4}. First of all, we need to prove the following 

\begin{lem}
\label{SO_2 lemma}
Let $p$ be a prime number such that $p\equiv 3 \pmod 4$. If $u,v\in \mathbb{F}_p^2\setminus \{(0,0)\}$ and $r\in (\mathbb{F}_p)^2\setminus\{0\}$ such that $\lVert u\rVert=r\lVert v\rVert$, then there exists unique $\theta \in \mathrm{SO}_2(\mathbb{F}_p)$ such that $u=\sqrt{r}\theta v,$ where $\sqrt{r}$ is an element of $\mathbb{F}_p$ whose square is $r$.
\end{lem}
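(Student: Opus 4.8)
The plan is to work out explicitly what $\mathrm{SO}_2(\mathbb{F}_p)$ looks like when $p \equiv 3 \pmod 4$ and then solve the linear system $u = \sqrt{r}\,\theta v$ directly. Recall that $\theta \in \mathrm{SO}_2(\mathbb{F}_p)$ means $\theta = \begin{pmatrix} a & -b \\ b & a \end{pmatrix}$ with $a^2 + b^2 = 1$ (the columns are orthonormal with determinant $1$; the sign pattern follows since $\det = 1$ rather than $-1$). So the claim reduces to: given $u, v \in \mathbb{F}_p^2 \setminus \{(0,0)\}$ with $\lVert u \rVert = r \lVert v \rVert$, there is a unique pair $(a,b)$ with $a^2 + b^2 = 1$ such that $u = \sqrt{r}(a v_1 - b v_2,\ b v_1 + a v_2)$.

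First I would reduce to the case $r = 1$ by replacing $u$ with $u/\sqrt{r}$; this is legitimate because $r$ is a nonzero square, $\sqrt{r} \in \mathbb{F}_p^*$ exists, and the hypothesis becomes $\lVert u/\sqrt{r} \rVert = \lVert v \rVert =: s$. Note $s \neq 0$: since $p \equiv 3 \pmod 4$, the form $x^2 + y^2$ is anisotropic, so $\lVert v \rVert = 0$ would force $v = (0,0)$, contrary to hypothesis. Now I must show there is a unique $\theta \in \mathrm{SO}_2(\mathbb{F}_p)$ with $\theta v = w$, where $w := u/\sqrt r$ and $\lVert w \rVert = \lVert v \rVert = s \ne 0$. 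Writing out $\theta v = w$ gives the system $a v_1 - b v_2 = w_1$, $b v_1 + a v_2 = w_2$, which is linear in $(a,b)$ with coefficient matrix $\begin{pmatrix} v_1 & -v_2 \\ v_2 & v_1 \end{pmatrix}$ of determinant $v_1^2 + v_2^2 = s \neq 0$. Hence there is exactly one solution $(a,b) \in \mathbb{F}_p^2$, namely $a = (v_1 w_1 + v_2 w_2)/s$, $b = (v_1 w_2 - v_2 w_1)/s$. It remains to check this unique solution actually lands in $\mathrm{SO}_2$, i.e. that $a^2 + b^2 = 1$: a direct computation gives $a^2 + b^2 = \frac{(v_1 w_1 + v_2 w_2)^2 + (v_1 w_2 - v_2 w_1)^2}{s^2} = \frac{(v_1^2 + v_2^2)(w_1^2 + w_2^2)}{s^2} = \frac{s \cdot s}{s^2} = 1$, using the Brahmagupta--Fibonacci identity and $\lVert w \rVert = s$.

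Finally I would package this back in terms of $\theta$: the matrix $\theta = \begin{pmatrix} a & -b \\ b & a \end{pmatrix}$ with the $(a,b)$ just found satisfies $\theta^{\mathsf T}\theta = (a^2+b^2)I = I$ and $\det \theta = a^2 + b^2 = 1$, so $\theta \in \mathrm{SO}_2(\mathbb{F}_p)$, and $u = \sqrt r\, \theta v$ by construction; uniqueness of $(a,b)$ gives uniqueness of $\theta$. The one genuinely essential use of the hypothesis $p \equiv 3 \pmod 4$ is to guarantee $\lVert v \rVert \neq 0$ (equivalently that the coefficient determinant is nonzero), so that the linear system is invertible — without anisotropy one could have $v \neq 0$ with $\lVert v \rVert = 0$ and the argument collapses. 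I don't anticipate a real obstacle here; the only thing to be careful about is to state the description of $\mathrm{SO}_2(\mathbb{F}_p)$ correctly and to confirm that the forced solution satisfies $a^2+b^2=1$ rather than merely assuming a rotation exists.
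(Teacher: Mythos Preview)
Your proof is correct and essentially the same as the paper's: both hinge on the matrix $V=\begin{pmatrix} v_1 & -v_2 \\ v_2 & v_1 \end{pmatrix}$ being invertible (determinant $\lVert v\rVert\neq 0$ by anisotropy when $p\equiv 3\pmod 4$), and both produce $\theta$ by inverting it. The paper packages the answer as $\theta=\frac{1}{\sqrt r}\,UV^{-1}$ with $U=\begin{pmatrix} u_1 & -u_2 \\ u_2 & u_1 \end{pmatrix}$ and leaves the verification $\theta\in\mathrm{SO}_2(\mathbb{F}_p)$ and uniqueness as routine checks, whereas you solve the linear system for $(a,b)$ explicitly and confirm $a^2+b^2=1$ via the Brahmagupta--Fibonacci identity; these are the same computation in different clothing.
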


\begin{proof}
    Suppose that $u=\begin{pmatrix}
           u_{1} \\
           u_{2}
         \end{pmatrix}$, $v=\begin{pmatrix}
           v_{1} \\
           v_{2}
         \end{pmatrix}$ and consider matrices $U=\begin{pmatrix}
           u_{1} & -u_2 \\
           u_{2} & u_1
         \end{pmatrix}$, $V=\begin{pmatrix}
           v_{1} & -v_2 \\
           v_{2} & v_1
         \end{pmatrix}$. It's clear that $U^{\top}U=\lVert u\rVert\mathbf{I}_2$, $V^{\top}V=\lVert v\rVert\mathbf{I}_2$, where $\mathbf{I}_2$ is an identity $2\times 2$ matrix. Since $U\begin{pmatrix}
           1 \\
           0
         \end{pmatrix}=u$ and $V\begin{pmatrix}
           1 \\
           0
         \end{pmatrix}=v$, hence $UV^{-1}v=u$. Thus $u=\sqrt{r}\theta v$, where $\theta=\frac{1}{\sqrt{r}}UV^{-1}$. It is not difficult to check that $\theta\in \mathrm{SO}_2(\mathbb{F}_p)$ and it is unique since the matrix $h\in \mathrm{SO}_2(\mathbb{F}_p)$ such that $h\begin{pmatrix}
           1 \\
           0
         \end{pmatrix}=\begin{pmatrix}
           1 \\
           0
         \end{pmatrix}$ is unique.
\end{proof}

\medskip

\begin{lem}
\label{lower bound for S(r)}
The following inequality holds:
\begin{equation*}
    |S_1(r)|\geq \left(\frac{1}{p}+\frac{1}{p^2}-\frac{1}{p^3}\right)|E|^4-\frac{2|E|^3}{p}-(p+1)|E|^2.
\end{equation*}    
\end{lem}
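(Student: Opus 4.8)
The plan is to run the standard Fourier / Gauss-sum computation on the sphere-counting function $\nu_1$, being careful to separate the main term from the error terms so that the displayed constants come out exactly. Throughout I use the standing hypothesis $p\equiv 3\pmod 4$ in two distinct ways: the only isotropic vector in $\mathbb{F}_p^2$ is the origin (hence $\nu_1(0)=|E|$, and more generally $\lVert\xi\rVert\neq 0$ whenever $\xi\neq 0$), and the quadratic Gauss sum $\mathfrak{g}(1):=\sum_{u\in\mathbb{F}_p}\chi(u^2)$ satisfies $\mathfrak{g}(1)^2=-p$, where $\chi$ denotes the canonical additive character of $\mathbb{F}_p$.

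First I would reduce to a ``full'' count. By the identity recorded in Section~\ref{sec:2} one has $|S_1(r)|=\sum_{t\in\mathbb{F}_p^{*}}\nu_1(t)\nu_1(rt)$, and since $\nu_1(0)=|E|$ this gives $|S_1(r)|=N(r)-|E|^2$, where $N(r):=\sum_{t\in\mathbb{F}_p}\nu_1(t)\nu_1(rt)$; so it suffices to bound $N(r)$ from below. Expanding $\mathds{1}(\lVert z\rVert=t)=\tfrac1p\sum_{s\in\mathbb{F}_p}\chi(s(\lVert z\rVert-t))$ gives $\nu_1(t)=\tfrac1p\sum_{s}\chi(-st)G(s)$ with $G(s):=\sum_{x,y\in E}\chi(s\lVert x-y\rVert)$ and $G(0)=|E|^2$; substituting this for both factors of $\nu_1$ in $N(r)$ and summing over $t$ by orthogonality collapses the double sum to
$$N(r)=\frac{|E|^4}{p}+\frac1p\sum_{s\in\mathbb{F}_p^{*}}G(s)G(-rs).$$

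Next — the technical heart of the argument — I would diagonalize $G(s)$ for $s\neq 0$ by completing the square. Passing to the Fourier side of $\mathds{1}_E$ and evaluating the resulting Gaussian sum coordinatewise yields $G(s)=\tfrac{\mathfrak{g}(s)^{2}}{p^{2}}\sum_{\xi}|\widehat{\mathds{1}_E}(\xi)|^{2}\chi(-\lVert\xi\rVert/(4s))$, and since $\mathfrak{g}(s)^2=\mathfrak{g}(1)^2=-p$ for every $s\in\mathbb{F}_p^{*}$ this becomes $G(s)=-\tfrac1p\big(|E|^2+R(s)\big)$ with $R(s):=\sum_{\xi\neq 0}|\widehat{\mathds{1}_E}(\xi)|^{2}\chi(-\lVert\xi\rVert/(4s))$. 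Plugging this into the sum over $s$, collecting the four cross-terms of $(|E|^2+R(s))(|E|^2+R(-rs))$, and repeatedly invoking the elementary evaluation ``$\sum_{s\in\mathbb{F}_p^{*}}\chi(c/s)$ equals $p-1$ if $c=0$ and $-1$ otherwise'' — this is where $\lVert\xi\rVert\neq 0$ for $\xi\neq0$ is used, together with Plancherel $\sum_{\xi\neq0}|\widehat{\mathds{1}_E}(\xi)|^{2}=p^{2}|E|-|E|^{2}$ — one arrives at the exact identity
$$N(r)=\frac{|E|^4}{p}+\frac1{p^3}\Big[(p-1)|E|^4-2|E|^2M+pP-M^2\Big],$$
where $M:=p^{2}|E|-|E|^{2}\ge 0$ and $P:=\sum_{\xi,\eta\neq 0,\ \lVert\eta\rVert=r\lVert\xi\rVert}|\widehat{\mathds{1}_E}(\xi)|^{2}|\widehat{\mathds{1}_E}(\eta)|^{2}\ge 0$.

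To finish, I would simply discard $P\ge 0$ and insert the crude bounds $0\le M\le p^{2}|E|$ (so that $2|E|^2M\le 2p^2|E|^3$ and $M^2\le p^4|E|^2$), which gives $N(r)\ge\big(\tfrac1p+\tfrac1{p^2}-\tfrac1{p^3}\big)|E|^4-\tfrac{2|E|^3}{p}-p|E|^2$; subtracting $|E|^2$ then yields exactly the asserted inequality for $|S_1(r)|$. The only place that needs real care is the diagonalization step: tracking the sign produced by $\mathfrak{g}(1)^2=-p$, correctly splitting off the $\xi=0$ contribution from $R(s)$, and invoking $p\equiv 3\pmod 4$ in the two ways above; everything else is routine character orthogonality. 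One could instead work directly with $\widehat{\mathds{1}_{S_t}}$, but that introduces Kloosterman-type sums and makes the constants harder to pin down, so I would avoid it.
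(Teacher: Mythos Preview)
Your argument is correct: the Fourier/Gauss--sum computation checks out line by line, and discarding $P\ge 0$ together with the crude bounds $M\le p^{2}|E|$ and $M^{2}\le p^{4}|E|^{2}$ does produce exactly the stated constants.

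The paper does not actually prove this lemma; it simply invokes inequality~(2.7) of Iosevich--Koh--Parshall \cite{MR3959878} with $d=2$. Your write-up is therefore a self-contained reconstruction of that computation, which is essentially the same Fourier-analytic method used there. One point worth flagging: you rely on $p\equiv 3\pmod 4$ both for $\nu_1(0)=|E|$ and for $\mathfrak{g}(1)^{2}=-p$, so your proof is specific to that case, whereas the cited result in \cite{MR3959878} is stated for general even $d$ over any $\mathbb{F}_q$. This is harmless in the present paper, since every application of the lemma (Theorems~\ref{2chainsthm}, \ref{4cyclesthm}, \ref{lowerboundS_3(r)}) already carries the hypothesis $p\equiv 3\pmod 4$, but it is a restriction relative to what the citation provides.
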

\begin{proof}
    This inequality follows from (2.7) in \cite{MR3959878} by taking $d=2$. 
\end{proof}

\medskip

\begin{lem}
\label{lower bound for S_2(r)}
The following inequality holds:
\begin{equation}
\label{inequality for l.b.S_2(r)}
    |S_2(r)|\geq |E|^{-2}|S_1(r)|^2.
\end{equation}
\end{lem}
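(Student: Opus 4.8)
The plan is to view both $|S_1(r)|$ and $|S_2(r)|$ as sums indexed by the pair of ``pivot'' vertices $(x_2,y_2)\in E\times E$, and then to apply the Cauchy--Schwarz inequality. For $z,w\in E$ set
\[
h(z,w):=\left|\left\{(x,y)\in E^2:\ x\neq z,\ \lVert y-w\rVert=r\lVert x-z\rVert\right\}\right|.
\]
The key point is the pair of bookkeeping identities $|S_1(r)|=\sum_{z,w\in E}h(z,w)$ and $|S_2(r)|=\sum_{z,w\in E}h(z,w)^2$.

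First I would verify these. Grouping the $4$-tuples defining $S_1(r)$ according to the coordinates $(x_2,y_2)$ gives $|S_1(r)|=\sum_{z,w\in E}h(z,w)$ immediately (relabel $x_1=x$, $y_1=y$). For $S_2(r)$, fix the middle coordinates $(x_2,y_2)=(z,w)$: the constraints on $(x_1,y_1)$, namely $x_1\neq z$ and $\lVert y_1-w\rVert=r\lVert x_1-z\rVert$, and the constraints on $(x_3,y_3)$, namely $x_3\neq z$ and $\lVert y_3-w\rVert=r\lVert x_3-z\rVert$ (using $\lVert y_2-y_3\rVert=\lVert y_3-y_2\rVert$), are independent of one another and each is counted by exactly $h(z,w)$; hence $|S_2(r)|=\sum_{z,w\in E}h(z,w)^2$. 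Equivalently, one can stay inside the framework of Section \ref{sec:2}: writing $\nu_2(t_1,t_2)=\sum_{z\in E}|\{x\in E:\lVert x-z\rVert=t_1\}|\cdot|\{x\in E:\lVert x-z\rVert=t_2\}|$ and substituting into the identity $|S_2(r)|=\sum_{t_1,t_2\in\mathbb{F}_p^*}\nu_2(t_1,t_2)\nu_2(rt_1,rt_2)$, then summing over $t_1$ and $t_2$ separately, produces the same square.

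Given these two identities the conclusion is one line: applying the Cauchy--Schwarz inequality to the $|E|^2$ numbers $h(z,w)$,
\[
|S_1(r)|^2=\Bigl(\sum_{z,w\in E}h(z,w)\Bigr)^{2}\le |E|^2\sum_{z,w\in E}h(z,w)^2=|E|^2\,|S_2(r)|,
\]
and dividing by $|E|^2$ yields \eqref{inequality for l.b.S_2(r)}. There is no genuine obstacle here; the only subtlety worth a moment's care is the combinatorial identity $|S_2(r)|=\sum_{z,w}h(z,w)^2$ — that the two ``arms'' of a $2$-path glued at a common pivot are counted by the \emph{same} function $h$ and that their contributions \emph{multiply} — after which the estimate is a routine application of Cauchy--Schwarz.
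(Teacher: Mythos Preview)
Your proof is correct and is, at bottom, the same Cauchy--Schwarz argument the paper gives: pivot at the middle pair $(x_2,y_2)$, recognize $|S_1(r)|$ and $|S_2(r)|$ as the first and second moments of a single ``degree'' function on $E\times E$, and apply Cauchy--Schwarz over the $|E|^2$ pivots. The paper presents this twice. Its first proof parametrizes your $h(z,w)$ via the action of $\mathrm{SO}_2(\mathbb F_p)$, writing $h(z,w)=\sum_\theta \lambda(r,\theta,z,w)$ and then running the same moment computation; this detour through Lemma~\ref{SO_2 lemma} is what forces the restriction to square $r$. Its second proof encodes $h$ as a vertex degree in an auxiliary bipartite graph $L\cup R$ and counts common neighbors. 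Your version strips both of these to their essential content: it avoids the group-action machinery (so it works for every $r\in\mathbb F_p^*$, matching the scope of the second proof) while also dispensing with the graph encoding. Nothing is lost; if anything, your argument is the cleanest of the three.
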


\medskip

\begin{remark}
    We shall prove the Lemma \ref{lower bound for S_2(r)} in two ways. The first proof is algebraic and relies on the group actions approach. However, the second proof is done by means of graph theory. It is worth noting that graph-theoretic proof gives a better result since it establishes the inequality \eqref{inequality for l.b.S_2(r)} for all $r\in \mathbb{F}_p^{*}$, but algebraic method proves it only for $r\in (\mathbb{F}_p)^2\setminus \{0\}$.      
\end{remark}

\begin{proof}[The first proof of Lemma \ref{lower bound for S_2(r)}] 

Assume that $r\in (\mathbb{F}_p)^2\setminus \{0\}$ and if we apply Lemma \ref{SO_2 lemma}, then we obtain:

\begin{equation*}
\begin{split}
|S_{2}(r)|&=\left|\left\{(x_1,x_2,x_3,y_1,y_2,y_3)\in E^6: \begin{array}{l}\lVert y_i-y_{i+1}\rVert=r\lVert x_i-x_{i+1}\rVert,\\
x_i\neq x_{i+1},\ i\in [2] \end{array}\right\}\right|\\[5pt]
&=\sum_{\theta_1,\theta_2}\left|\left\{(x_1,x_2,x_3,y_1,y_2,y_3)\in E^6: \begin{array}{l} y_i-y_{i+1}=\sqrt{r}\theta_i( x_i-x_{i+1}),\\
x_i\neq x_{i+1},\ i\in [2] \end{array}\right\}\right|\\[5pt]
&=\sum_{\theta_1,\theta_2}\left|\left\{(x_1,x_2,x_3,y_1,y_2,y_3)\in E^6: \begin{array}{l} y_i-\sqrt{r}\theta_ix_i=y_{i+1}-\sqrt{r}\theta_ix_{i+1},\\
x_i\neq x_{i+1},\ i\in [2] \end{array}\right\}\right|\\[5pt]
&=\sum_{\substack{\theta_1,\theta_2\\a,b}}\left|\left\{(x_1,x_2,x_3,y_1,y_2,y_3)\in E^6: \begin{array}{l}y_1-\sqrt{r}\theta_1 x_1=y_{2}-\sqrt{r}\theta_1 x_{2}=a,\\
y_2-\sqrt{r}\theta_2 x_2=y_{3}-\sqrt{r}\theta_2 x_{3}=b, \\ x_1\neq x_{2},\ x_2\neq x_3 \end{array}\right\}\right|\\[5pt]
&=\sum_{\substack{\theta_1,\theta_2\\a,b}}\,
  \smashoperator[r]{%
  \sum_{\substack{y_2-\sqrt{r}\theta_1x_2=a\\ y_2-\sqrt{r}\theta_2x_2=b}}}
  \oneE(x_2)\oneE(y_2)
  \smashoperator[r]{%
  \sum_{\substack{y_1-\sqrt{r}\theta_1x_1=a\\ x_1\neq x_2}}}
  \oneE(x_1)\oneE(y_1) 
  \smashoperator[r]{%
  \sum_{\substack{y_3-\sqrt{r}\theta_2x_3=b\\ x_3\neq x_2}}}
  \oneE(x_3)\oneE(y_3) \\[5pt]
\end{split}
\end{equation*}
\begin{equation*}
\begin{split}
&=\sum _{x_2,y_2\in E}
  \begin{aligned}[t]
  &\underbrace{\Biggl(\,
  \sum_{\theta_1,a}\,
  \smashoperator[r]{%
  \sum _{\substack{y_1-\sqrt{r}\theta_1x_1=a\\ x_1\neq x_2}}}
  \oneE(x_1)\oneE(y_1)\oneE[\{a\}](y_2-\sqrt{r}\theta_1x_2)
  \!\Biggr)}_{\mathrm{I}}  \\
  &\times\underbrace{\Biggl(\,
  \sum_{\theta_2,b}\,
  \smashoperator[r]{%
  \sum _{\substack{y_3-\sqrt{r}\theta_2x_3=b\\ x_2\neq x_3}}}
  \oneE(x_3)\oneE(y_3)\oneE[\{b\}](y_2-\sqrt{r}\theta_2x_2)
  \!\Biggr).}_{\mathrm{II}} 
  \end{aligned}
\end{split}  
\end{equation*}

\smallskip

One can easily check that $\mathrm{I}$ and $\mathrm{II}$ can be written as follows:

\medskip

\begin{equation*}
\begin{split}
    \mathrm{I}&=\sum \limits_{\theta_1}\sum \limits_{\substack{y_1-\sqrt{r}\theta_1x_1=y_2-\sqrt{r}\theta_1x_2\\ x_1\neq x_2}}\mathds{1}_{E}(x_1)\mathds{1}_{E}(y_1)\\
    &=\sum \limits_{\theta_1}\left|\left\{(x_1,y_1)\in E^2: y_1-\sqrt{r}\theta_1x_1=y_2-\sqrt{r}\theta_1x_2,\ 
x_2\neq x_{3}\right\}\right|,
\end{split}
\end{equation*}

\begin{equation*}
    \mathrm{II}=\sum \limits_{\theta_2}\left|\left\{(x_3,y_3)\in E^2: y_3-\sqrt{r}\theta_2x_3=y_2-\sqrt{r}\theta_2x_2,\ 
x_1\neq x_{2}\right\}\right|.
\end{equation*}

If we let $\lambda(r,\theta,x_2,y_2)\coloneqq\left|\left\{(x,y)\in E^2: y-\sqrt{r}\theta x=y_2-\sqrt{r}\theta x_2,\
x\neq x_{2}\right\}\right|
$, then we obtain

\begin{equation*}
    |S_{2}(r)|=\sum \limits_{x_2,y_2\in E}\left(\sum \limits_{\theta}\lambda (r,\theta,x_2,y_2) \right)^2.
\end{equation*}

Applying Cauchy-Schwarz inequality, we obtain
\begin{equation*}
    \left( \sum \limits_{x_2,y_2\in E}\sum \limits_{\theta}\lambda (r,\theta,x_2,y_2)\right)^2\leq |E|^2\sum \limits_{x_2,y_2\in E}\left(\sum \limits_{\theta}\lambda (r,\theta,x_2,y_2) \right)^2.
\end{equation*}
Hence 
\begin{equation*}
    |S_{2}(r)|\geq |E|^{-2} \left( \sum \limits_{x_2,y_2\in E}\sum \limits_{\theta}\lambda (r,\theta,x_2,y_2)\right)^2.
\end{equation*}

We shall show that the inner sum is $|S_1(r)|$. Indeed,

\begin{align*}
     &\sum \limits_{x_2,y_2\in E}\sum \limits_{\theta\in \mathrm{SO}_2(\mathbb{F}_p)}\lambda (r,\theta,x_2,y_2)\\[5pt]
     &=\sum \limits_{x_2,y_2\in E}\sum \limits_{\theta\in \mathrm{SO}_2(\mathbb{F}_p)}\left|\left\{(x,y)\in E^2: y-\sqrt{r}\theta x=y_2-\sqrt{r}\theta x_2,\
x\neq x_{2}\right\}\right|\\[5pt]
     &=\sum \limits_{x_2,y_2\in E}\left|\left\{(x,y)\in E^2: \lVert y-y_2 \rVert=r\lVert x-x_2 \rVert,\
x\neq x_{2}\right\}\right|\\[5pt]
     &=\left|\left\{(x,x_2,y,y_2)\in E^4: \lVert y-y_2 \rVert=r\lVert x-x_2 \rVert,\
x\neq x_{2}\right\}\right|= |S_1(r)|,
\end{align*} which completes the proof.
\end{proof}

\smallskip

\begin{proof}[The second proof of Lemma \ref{lower bound for S_2(r)}]
            Consider the graph $G=(V,E)$ defined as follows: let's introduce the following notations: \begin{align*}
        L:=E\times E=\{(x,y): x,y\in E\}, 
    \end{align*}
    \begin{align*}
        R:=S_1(r)=\{(x_1,x_2,y_1,y_2)\in E^4: \lVert y_1-y_2\rVert=r\lVert x_1-x_2\rVert,\ x_1\neq x_2\}.
    \end{align*} 
    
    Define the vertex set $V$ to be $L\cup R$ and the edge set $E$ we will define in the following way: each vertex $(x_1,x_2,y_1,y_2)\in R$ we join with $(x_1,y_1)\in L$ and $(x_2,y_2)\in L$ and we notice that $(x_1,y_1)\neq (x_2,y_2)$. 

    \smallskip

    For any vertex $v\in V$, let $d_v$ denote it's degree, i.e. \begin{equation*}
        d_v:=|\{e\in E: e \ \text{is incident with} \ v\}|.
    \end{equation*} 

    From degree sum formula, we obtain \begin{equation*}
        \sum \limits_{v\in L} d_v+\sum \limits_{v\in R} d_v=2|E|.
    \end{equation*} 
    
    If $v\in R$, then $d_v=2$ and $|E|=2|R|=2|S_1(r)|$ and hence 
    \begin{equation*}
        \sum \limits_{v\in L} d_v=2|S_1(r)|.
    \end{equation*}

    Applying Cauchy-Schwarz, we obtain \begin{equation}
    \label{eq2.1}
        \sum \limits_{v\in L} d_v^2\geq |L|^{-1}\left( \sum \limits_{v\in L} d_v\right)^2=4|E|^{-2}|S_1(r)|^2.
    \end{equation}

    We see that $d_v=|\{u\in R: uv\in E\}|$ because the graph $G$ is simple (without loops and multiple edges), then \begin{equation}
    \label{eq2.2}
        \sum \limits_{v\in L} d_v^2=|\{(u,u',v)\in R\times R\times L: uv\in E,\ u'v\in E\}|.  
    \end{equation}

    We will show that the quantity on the RHS of \eqref{eq2.2} is equal to $4|S_{2}(r)|$. 

\

    Indeed, let's consider a function \begin{equation*}
        f: \{(u,u',v)\in R\times R\times L: uv\in E,\ u'v\in E\}\to S_{2}(r)
    \end{equation*} defined as follows: each element of the domain is $(u,u',v)\in R\times R\times L$ with $uv\in E$, $u'v\in E$ and hence $u=(a,b,a',b')\in S_1(r),\ u'=(c,d,c',d')\in S_1(r)$. The vertex $v\in L$ being incident with both $u=(a,b,a',b')$ and $u'=(c,d,c',d')$ implies that $$((v=(a,a'))\lor (v=(b,b')))\land ((v=(c,c'))\lor (v=(d,d'))).$$ 
    
    Therefore, we have the following 4 cases:
    \begin{itemize}
        \item if $v=(a,a')=(c,c'),$ then \begin{equation*}
            (u,u',v)\equiv((a,b,a',b'),(a,d,a',d'),(a,a'))\xmapsto{f} (b,a,d,b',a',d').
        \end{equation*}
    \end{itemize}
    
    \begin{itemize}
        \item if $v=(a,a')=(d,d'),$ then \begin{align*}
            (u,u',v)\equiv((a,b,a',b'),(c,a,c',a'),(a,a'))\xmapsto{f} (b,a,c,b',a',c').
        \end{align*}
    \end{itemize}
    
    \begin{itemize}
        \item if $v=(b,b')=(c,c'),$ then \begin{equation*}
            (u,u',v)=((a,b,a',b'),(b,d,b',d'),(b,b'))\xmapsto{f} (a,b,d,a',b',d').
        \end{equation*}
    \end{itemize}
    \begin{itemize}
        \item if $v=(b,b')=(d,d'),$ then \begin{equation*}
            (u,u',v)=((a,b,a',b'),(c,b,c',b'),(b,b'))\xmapsto{f} (a,b,c,a',b',c').
        \end{equation*}
    \end{itemize}

    Defining $f$ in this way implies that for each $\alpha\in S_{2}(r)$ the preimage of $\alpha$ under $f$ has exactly $4$ elements: thus $|f^{-1}(\{\alpha\})|=4.$ It immediately implies that \begin{equation}
    \label{eq2.3}
        |\{(u,u',v)\in R\times R\times L: uv\in E,\ u'v\in E\}|=4|S_{2}(r)|.
    \end{equation} 
    
    Comparing \eqref{eq2.3}  with \eqref{eq2.2} and \eqref{eq2.1}, we obtain the desired inequality 
    \begin{equation*}
        |S_{2}(r)|\geq |E|^{-2}|S_1(r)|^2. \qedhere
    \end{equation*}
    \end{proof}

\smallskip

\begin{lem}
\label{lem2.4}
    The following inequality holds:
    \begin{equation*}
        |C(r)|\geq |E|^{-4}|S_{2}(r)|^2.
    \end{equation*}
\end{lem}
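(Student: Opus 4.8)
The plan is to mimic the graph-theoretic argument used in the second proof of Lemma \ref{lower bound for S_2(r)}, but now one level up, pairing $4$-cycles against $2$-paths. Concretely, I would build a bipartite-type graph $G=(V,E)$ whose left class is $L:=S_2(r)$, the set of (possibly degenerate in the long diagonal, but with consecutive vertices distinct) pairs of $2$-paths with dilation ratio $r$, and whose right class is $R:=C(r)$, the set of pairs of $4$-cycles with dilation ratio $r$. The incidence rule should express the fact that a $4$-cycle on vertices $x_1,x_2,x_3,x_4$ can be cut along a diagonal into two $2$-paths sharing their endpoints: to each $(x_1,\dots,x_4,y_1,\dots,y_4)\in C(r)$ I would attach the two elements of $S_2(r)$ obtained by splitting along the diagonal $\{x_1,x_3\}$, namely $(x_4,x_1,x_2,y_4,y_1,y_2)$ coming from the path $x_4\to x_1\to x_2$ paired with $(x_2,x_3,x_4,y_2,y_3,y_4)$ coming from the path $x_2\to x_3\to x_4$. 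One must check both split pieces genuinely lie in $S_2(r)$: the norm relations $\lVert y_i-y_{i+1}\rVert=r\lVert x_i-x_{i+1}\rVert$ for $i\in[3]$ and the closing relation give exactly the four edge constraints needed, and the requirement $x_i\neq x_{i+1}$ in $C(r)$ (including $x_4\neq x_1$) supplies the consecutive-distinctness demanded by the definition of $S_2(r)$.

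Next I would run the degree-counting identity. Every vertex in $R=C(r)$ has degree exactly $2$ (it splits into an ordered pair of $2$-paths via the fixed diagonal, giving two incident left-vertices, and one should note these two are distinct provided the middle vertices differ — if they can coincide this is the point to be careful, possibly building a multigraph or tweaking the incidence so that the count stays clean). Hence $\sum_{v\in L}d_v = 2|C(r)|$, i.e. $\sum_{v\in L}d_v = 2|R| = 2|C(r)|$ — wait, more precisely $\sum_{v\in L}d_v=2|E|=2\cdot 2|C(r)|$ after accounting for the two incidences per right vertex; I will track the constant carefully as in the previous lemma. Then Cauchy--Schwarz gives
\begin{equation*}
    \sum_{v\in L} d_v^2 \;\geq\; |L|^{-1}\Bigl(\sum_{v\in L} d_v\Bigr)^2 \;=\; |S_2(r)|^{-1}\bigl(2|C(r)|\cdot(\text{const})\bigr)^2 .
\end{equation*}

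On the other hand, $\sum_{v\in L}d_v^2$ counts triples $(u,u',v)\in R\times R\times L$ with $uv,u'v\in E$, and I would construct an explicit map from this set onto $S_2(r)\times S_2(r)$ — or rather, reinterpret it: two $4$-cycles $u,u'$ both incident to the same $2$-path $v\in S_2(r)$ correspond to choosing $v$ and then completing it to a $4$-cycle in two independent ways, so this count is at most (in fact, comparably to) $|S_2(r)|^2$ weighted appropriately; the cleanest route is to exhibit a map onto a set of size $O(|E|^2|S_2(r)|^2/|S_2(r)|)$ and count fibers, exactly parallel to the function $f$ in the second proof of Lemma \ref{lower bound for S_2(r)}. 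Combining the two expressions for $\sum d_v^2$ and cancelling yields $|C(r)|\geq |E|^{-4}|S_2(r)|^2$ after the algebra settles.

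The main obstacle I anticipate is bookkeeping the degenerate incidences and constants: unlike the $2$-path case, a $4$-cycle has \emph{two} diagonals and its two split halves may overlap or repeat vertices, so I must either fix one diagonal consistently (as above) and verify the resulting left-vertex genuinely satisfies the $S_2(r)$ definition in every case, or absorb the bad cases into a lower-order term — but since the target inequality has no error term, the map-and-fiber argument must be exact, which forces a careful choice of the incidence relation so that every fiber of the counting map has the same size and $C(r)$'s elements are each hit the correct number of times. Getting that combinatorial identity exactly right, rather than up to constants, is where the real work lies.
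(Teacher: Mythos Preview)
Your setup is backwards, and this is why you cannot get the $|E|^{-4}$ factor or close the argument. You take $L=S_2(r)$ and $R=C(r)$; the paper takes $L=E^4$ and $R=S_2(r)$. This is not a cosmetic difference.

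In the paper's proof, each $2$-path $(x_1,x_2,x_3,y_1,y_2,y_3)\in S_2(r)$ is joined to the single vertex $(x_1,x_3,y_1,y_3)\in E^4$. Then $\sum_{v\in L}d_v=|S_2(r)|$, Cauchy--Schwarz over $L=E^4$ produces the factor $|L|^{-1}=|E|^{-4}$, and --- crucially --- the quantity $\sum_{v\in L}d_v^2$ counts ordered pairs of $2$-paths sharing the same outer quadruple $(x_1,x_3,y_1,y_3)$, which \emph{is} $|C(r)|$ (relabel the two middle vertices as $x_2,x_4$ and you have exactly the $4$-cycle definition). So the inequality drops out in one line: $|C(r)|=\sum d_v^2\ge |E|^{-4}\bigl(\sum d_v\bigr)^2=|E|^{-4}|S_2(r)|^2$.

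In your setup, by contrast, Cauchy--Schwarz over $L=S_2(r)$ gives
\[
\sum_{v\in L}d_v^2\ \ge\ |S_2(r)|^{-1}\bigl(2|C(r)|\bigr)^2,
\]
and $\sum_{v\in L}d_v^2$ now counts pairs of $4$-cycles that share a common $2$-path half --- a quantity with no relation to the target inequality. You sensed this yourself when the algebra refused to settle and you reached for an ad hoc ``map onto a set of size $O(|E|^2|S_2(r)|^2/|S_2(r)|)$''. The point is that $|C(r)|$ is \emph{already} the second moment in this problem; it should appear as $\sum d_v^2$, not be fed in as the degree-$2$ side. The missing idea is simply: index by the four ``diagonal'' endpoints in $E^4$, not by the $2$-paths.
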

    \begin{proof}
            We will consider the graph $G=(V,E)$ which is defined in the following way: let's introduce the following notations: \begin{equation*}
        L:= E\times E\times E\times E,
    \end{equation*}
    \begin{equation*}
        R:=S_{2}(r).
    \end{equation*} 
    
    So we define the vertex set $V$ of graph $G$ to be $L\cup R$. We will define edge set $E$ of the graph $G$ as follows: each vertex $v\in R$ which has form $(x_1,x_2,x_3,y_1,y_2,y_3)$ we connect by an edge with $(x_1,x_3,y_1,y_3)\in L$.

    From degree sum formula, we obtain \begin{equation*}
        \sum \limits_{v\in L}d_v+\sum \limits_{v\in R}d_v=2|E|,
    \end{equation*} where $d_v:=\deg(v)$. If $v\in R$, then $d_v=1$  and $|E|=|S_{2}(r)|$. Hence we have $$\sum \limits_{v\in L}d_v=|S_{2}(r)|.$$ 
    
    Applying Cauchy-Schwarz inequality, we obtain 
    \begin{equation}
    \label{eq2.4}
        \sum \limits_{v\in L}d_v^2\geq |E|^{-4}|S_{2}(r)|^2.
    \end{equation} 
    
    However,
    
\begin{equation} 
\label{eq2.5}
        \sum \limits_{v\in L}d_v^2  = |\{(u,u',v)\in R\times R\times L: uv\in E,\ u'v\in E\}|= |C(r)|.
\end{equation}

Comparing \eqref{eq2.5} with \eqref{eq2.4} we obtain the desired inequality 
\begin{equation*}
    |C(r)|\geq |E|^{-4}|S_{2}(r)|^2. \qedhere
\end{equation*}
    \end{proof}

\medskip

We also need to know the size of the sphere $S_t$ in $\mathbb{F}_q^d$ which is given in the following 

\begin{lem}
\label{lem2.5}
    Let $S_t$ denote the sphere of radius $t\in \mathbb{F}_q$ in $\mathbb{F}_q^d$. If $d\geq 2$ is even, then 
    \begin{equation*}
        |S_t|=q^{d-1}+\lambda(t)q^{\frac{d-2}{2}}\eta\left((-1)^{\frac{d}{2}}\right),
    \end{equation*}
    where $\eta$ is the quadratic character of $\mathbb{F}_q^*$, $\lambda(t)=-1$ for $t\in \mathbb{F}_q^*$, and $\lambda(0)=q-1$.
\end{lem}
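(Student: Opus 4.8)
The plan is to count the solutions of $\|x\| = x_1^2+\cdots+x_d^2 = t$ by expanding the indicator of the sphere into additive characters. Fix a nontrivial additive character $\chi$ of $\mathbb{F}_q$. By orthogonality of characters,
\begin{equation*}
|S_t| = \sum_{x\in\mathbb{F}_q^d}\frac{1}{q}\sum_{s\in\mathbb{F}_q}\chi\bigl(s(\|x\|-t)\bigr) = q^{d-1} + \frac{1}{q}\sum_{s\in\mathbb{F}_q^*}\chi(-st)\prod_{j=1}^{d}\sum_{x_j\in\mathbb{F}_q}\chi(sx_j^2),
\end{equation*}
where the term $s=0$ supplies the main term $q^{d-1}$ and, for $s\neq 0$, $\chi(s\|x\|)$ has been factored over coordinates.

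The next step is the evaluation of the Gauss-type sum $\sum_{x\in\mathbb{F}_q}\chi(sx^2)$. Writing $\eta$ for the quadratic character of $\mathbb{F}_q^*$ (with $\eta(0)=0$) and $g:=\sum_{x\in\mathbb{F}_q^*}\eta(x)\chi(x)$ for the quadratic Gauss sum, one has $\sum_{x\in\mathbb{F}_q}\chi(sx^2) = \eta(s)\,g$ for every $s\in\mathbb{F}_q^*$ (use that $x\mapsto x^2$ takes the value $u\neq 0$ exactly $1+\eta(u)$ times, then substitute $v=su$), together with the classical identity $g^2 = \eta(-1)q$. Since $d$ is even, $\eta(s)^d = 1$ for all $s\in\mathbb{F}_q^*$, so the product over $j$ collapses to $g^d = (g^2)^{d/2} = \eta(-1)^{d/2}q^{d/2} = \eta\bigl((-1)^{d/2}\bigr)q^{d/2}$, which no longer depends on $s$. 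Substituting back,
\begin{equation*}
|S_t| = q^{d-1} + \eta\bigl((-1)^{d/2}\bigr)\,q^{\frac{d-2}{2}}\sum_{s\in\mathbb{F}_q^*}\chi(-st),
\end{equation*}
and the remaining character sum equals $q-1$ when $t=0$ and $-1$ when $t\neq 0$, i.e.\ it equals $\lambda(t)$; this is exactly the asserted formula.

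The only ingredient that is not pure bookkeeping is the evaluation $g^2 = \eta(-1)q$ of the quadratic Gauss sum (equivalently the identity $\sum_{x}\chi(sx^2) = \eta(s)g$), which is standard and can be quoted from Lidl--Niederreiter; so I do not anticipate a genuine obstacle. If one prefers to avoid Gauss sums entirely, an alternative is induction on $d$ in steps of $2$: from $|S_t^{(d)}| = \sum_{s\in\mathbb{F}_q}N_2(t-s)\,|S_s^{(d-2)}|$, where $N_2(u) = |\{(x,y)\in\mathbb{F}_q^2 : x^2+y^2=u\}| = q + \lambda(u)\eta(-1)$ (proved by the factorization $x^2+y^2 = (x+iy)(x-iy)$ when $-1$ is a square in $\mathbb{F}_q$, and by an anisotropy/counting argument when it is not), using the elementary convolution identities $\sum_{s}\lambda(s) = 0$ and $\sum_{s}\lambda(s)\lambda(t-s) = q\lambda(t)$; this reproduces the same closed form but is longer than the character-sum route.
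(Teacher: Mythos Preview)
Your proof is correct. The paper does not prove this lemma at all---it simply cites Theorem~6.26 of Lidl--Niederreiter---and your additive-character argument is precisely the standard computation underlying that theorem, so the approaches coincide in substance.
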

\begin{proof}
    This follows from Theorem 6.26 in ~\cite{MR1429394}.
\end{proof}

\medskip

The trivial inequality between counting functions $\nu_{2}(\cdot,\cdot)$ and $\nu_1(\cdot)$ which comes in handy is given in the following

\begin{lem}
\label{lem2.6}
    If $t_1,t_2\in \mathbb{F}_p$, then the following inequality holds:
    \begin{equation*}
        \nu_{2}(t_1,t_2)\leq |E|\nu_1(t_1).
    \end{equation*}
\end{lem}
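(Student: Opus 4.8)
The plan is to count the triples defining $\nu_2(t_1,t_2)$ by first recording the initial pair $(x_1,x_2)$ and then bounding, for each such pair, the number of admissible choices of $x_3$. Concretely, consider the map
\begin{equation*}
\pi:\ \{(x_1,x_2,x_3)\in E^3:\ \lVert x_1-x_2\rVert=t_1,\ \lVert x_2-x_3\rVert=t_2\}\ \longrightarrow\ \{(x_1,x_2)\in E^2:\ \lVert x_1-x_2\rVert=t_1\}
\end{equation*}
given by $\pi(x_1,x_2,x_3)=(x_1,x_2)$. This is well-defined: if the triple satisfies the two sphere conditions, then in particular $\lVert x_1-x_2\rVert=t_1$, so the image pair lies in the set counted by $\nu_1(t_1)$.

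Next I would estimate the size of each fibre. Fix $(x_1,x_2)$ in the target. Any preimage is of the form $(x_1,x_2,x_3)$ with $x_3\in E$ and $\lVert x_2-x_3\rVert=t_2$, i.e. $x_3\in E\cap(x_2+S_{t_2})$; in any case $x_3$ ranges over a subset of $E$, so there are at most $|E|$ such $x_3$. Hence every fibre of $\pi$ has at most $|E|$ elements.

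Summing the fibre bound over the target set yields
\begin{equation*}
\nu_2(t_1,t_2)=\sum_{\substack{(x_1,x_2)\in E^2\\ \lVert x_1-x_2\rVert=t_1}}\bigl|\pi^{-1}(x_1,x_2)\bigr|\ \leq\ |E|\cdot\bigl|\{(x_1,x_2)\in E^2:\ \lVert x_1-x_2\rVert=t_1\}\bigr|\ =\ |E|\,\nu_1(t_1),
\end{equation*}
which is the claimed inequality. There is no real obstacle here — the only thing to be careful about is that we drop the constraint $\lVert x_2-x_3\rVert=t_2$ when bounding the fibre by $|E|$ (rather than by $|S_{t_2}|$), which is exactly the "trivial" estimate the lemma asserts and is all that is needed downstream.
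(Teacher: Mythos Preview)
Your proof is correct and is essentially the same as the paper's: both arguments drop the constraint $\lVert x_2-x_3\rVert=t_2$ and then sum freely over $x_3\in E$, the paper doing this via indicator functions (replacing $\mathds{1}_{S_{t_2}}(x_2-x_3)\le 1$) and you via the fibre bound $|\pi^{-1}(x_1,x_2)|\le |E|$. The two presentations are interchangeable.
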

\begin{proof}
        By definition of the counting function $\nu_{2}(\cdot,\cdot)$, we obtain 
        \[\begin{split}
        \nu_{2}(t_1,t_2)&=|\{(x_1,x_2,x_3)\in E^3:\lVert x_1-x_2 \rVert=t_1,\ \lVert x_2-x_3 \rVert=t_2\}|\\[5pt]
        &=\sum\limits_{x_1,x_2,x_3\in \mathbb{F}_p^2}\mathds{1}_{E}(x_1)\mathds{1}_{E}(x_2)\mathds{1}_{E}(x_3)\mathds{1}_{S_{t_1}}(x_1-x_2)\mathds{1}_{S_{t_2}}(x_2-x_3) \\[5pt]
        &\leq\sum\limits_{x_1,x_2,x_3\in \mathbb{F}_p^2}\mathds{1}_{E}(x_1)\mathds{1}_{E}(x_2)\mathds{1}_{E}(x_3)\mathds{1}_{S_{t_1}}(x_1-x_2)\\[5pt]
        &=\sum \limits_{x_3\in \mathbb{F}_p^2}\mathds{1}_{E}(x_3)\sum\limits_{x_1,x_2\in \mathbb{F}_p^2}\mathds{1}_{E}(x_1)\mathds{1}_{E}(x_2)\mathds{1}_{S_{t_1}}(x_1-x_2)\\[5pt]
        &=|E|\nu_1(t_1). \qedhere
        \end{split}\]
\end{proof}

\comment{The following lemma provides us a geometric characterization for nonzero vectors in $\mathbb{F}_p^2$ which ``norms'' are multiple to each other with $\mathbb{F}_p^+$ coefficient.

\begin{lem}
\label{lem2.7}
If $u,v\in \mathbb{F}_p^2\setminus \{\vec 0\}$ and $r\in \mathbb{F}_p^+$ with $\lVert u\rVert=r\lVert v\rVert$, then there exists a unique $\theta \in \mathrm{SO}_2(\mathbb{F}_p)$ such that $u=\sqrt{r}\theta v,$ where $\sqrt{r}$ is an element of $\mathbb{F}_p$ whose square is $r$.
\end{lem}
\begin{proof}
Suppose that $u=\begin{pmatrix}
u_1  \\
u_2
\end{pmatrix},\ v=\begin{pmatrix}
v_1  \\
v_2
\end{pmatrix}$ and consider the matrices defined as follows: $U:=\begin{pmatrix}
u_1 & -u_2 \\
u_2 & u_1
\end{pmatrix},\ V:=\begin{pmatrix}
v_1 & -v_2 \\
v_2 & v_1
\end{pmatrix}$. It's clear that $U^\top U=\lVert u\rVert \mathbf{I}_2$ and $V^\top V=\lVert v\rVert \mathbf{I}_2$, where $\mathbf{I}_2$ is the identity $2\times2$ matrix. Since $U\begin{pmatrix}
1  \\
0
\end{pmatrix}=u$ and $V\begin{pmatrix}
1  \\
0
\end{pmatrix}=v$ then it follows that $UV^{-1}v=u$. Thus $u=\sqrt{r}\theta v$, where $\theta=\dfrac{1}{\sqrt{r}}UV^{-1}$. One can show that $\theta\in \mathrm{SO}_2(\mathbb{F}_p)$ and it is a unique since the matrix $h\in \mathrm{SO}_2(\mathbb{F}_p)$ such that $h\begin{pmatrix}
1  \\
0
\end{pmatrix}=\begin{pmatrix}
1  \\
0
\end{pmatrix}$ is a unique. 
\end{proof}

\

\begin{remark}

If $u=\sqrt{r}\theta v$ with $\theta\in \mathrm{O}_2(\mathbb{F}_p)$, then $\lVert u\rVert=r\lVert v\rVert$. 

Indeed, if $\theta=\begin{pmatrix}
\theta_{11} & \theta_{12} \\
\theta_{21} & \theta_{2}
\end{pmatrix}\in \mathrm{O}_2(\mathbb{F}_p)$, then $\theta^2_{11}+\theta^2_{21}=\theta^2_{12}+\theta^2_{22}=1$ and $\theta_{11}\theta_{12}+\theta_{21}\theta_{22}=0$. Therefore, $u_1=\sqrt{r}\theta_{11}v_1+\sqrt{r}\theta_{12}v_2$ and $u_2=\sqrt{r}\theta_{21}v_1+\sqrt{r}\theta_{22}v_2$ and hence 
\begin{equation*}
    \begin{split}
        \lVert u \rVert=u_1^2+u_2^2=rv_1^2(\theta^2_{11}+\theta^2_{21}&)+rv_2^2(\theta^2_{12}+\theta^2_{22})+2rv_1v_2(\theta_{11}\theta_{12}+\theta_{21}\theta_{22})=\\
        &=rv_1^2+rv_2^2=r\lVert v\rVert.
    \end{split}
\end{equation*}
\end{remark}
}

\medskip

The following lemma gives us the order of the group of orthogonal matrices over finite field.

\begin{lem}
\label{lem2.9}
If $F$ is any field and $\mathrm{O}_n(F)$ is a group of orthogonal $n\times n$ matrices with entries in $F$, then for any odd prime $p$ we have: $$|\mathrm{O}_{2n+1}(\mathbb{F}_p)|=2p^{n^2}\prod \limits_{i=1}^n(p^{2i}-1),$$ $$|\mathrm{O}_{2n}^+(\mathbb{F}_p)|=2p^{n(n-1)}(p^n-1)\prod \limits_{i=1}^{n-1}(p^{2i}-1),$$ $$|\mathrm{O}_{2n}^-(\mathbb{F}_p)|=2p^{n(n-1)}(p^n+1)\prod \limits_{i=1}^{n-1}(p^{2i}-1).$$
\end{lem}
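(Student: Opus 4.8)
The plan is to deduce all three formulas at once by descending induction on the dimension, the two engines being Witt's extension theorem (which identifies the point stabilizer, and so gives a clean recursion) and the classical count of points on a sphere over $\mathbb{F}_p$. All of this is standard, and the identities may simply be quoted from a reference on the classical groups (e.g.\ Grove, \emph{Classical Groups and Geometric Algebra}, or Taylor, \emph{The Geometry of the Classical Groups}); a self-contained route is as follows.

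\emph{The recursion.} Fix an odd prime $p$ and a nondegenerate quadratic form $Q$ in $n\ge 2$ variables over $\mathbb{F}_p$. Then $Q$ represents $1$; fix $v$ with $Q(v)=1$. By Witt's extension theorem (valid in characteristic $\ne 2$), the group $\mathrm{O}(Q)$ acts transitively on the sphere $\{x:\,Q(x)=1\}$, of size $N_n^Q(1)$ say, and the stabilizer of $v$ is exactly the orthogonal group of $Q$ restricted to $v^{\perp}$. Since $\langle v\rangle$ carries the form $x\mapsto x^{2}$ of discriminant $1$, this restriction is nondegenerate in $n-1$ variables with the same discriminant as $Q$ (and, in even dimension, the discriminant determines whether the form is of $+$ or of $-$ type; in odd dimension the two inequivalent forms differ by a scalar and have the same orthogonal group). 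By orbit--stabilizer,
\begin{equation*}
  |\mathrm{O}(Q)| \;=\; N_n^Q(1)\cdot |\mathrm{O}(Q|_{v^{\perp}})| .
\end{equation*}
Iterating down to $\mathrm{O}_1(\mathbb{F}_p)=\{\pm 1\}$, which has order $2$, gives $|\mathrm{O}(Q)|=2\prod_{k=2}^{n}N_k^{Q_k}(1)$ for a suitable chain of forms $Q_k$ of matching discriminant.

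\emph{The sphere counts and the telescoping.} For $Q$ the sum-of-squares form, Theorems 6.26 and 6.27 of \cite{MR1429394} give, for $p$ odd,
\begin{equation*}
  N_{2j}(1)=p^{\,j-1}\bigl(p^{\,j}-\eta((-1)^{j})\bigr),\qquad N_{2j+1}(1)=p^{\,j}\bigl(p^{\,j}+\eta((-1)^{j})\bigr),
\end{equation*}
where $\eta$ is the quadratic character of $\mathbb{F}_p^{*}$ and, crucially, $\eta((-1)^{m})=1$ precisely when the $2m$-dimensional sum-of-squares form is of $+$ type. Multiplying consecutive factors,
\begin{equation*}
  N_{2j}(1)\,N_{2j+1}(1)=p^{\,2j-1}\bigl(p^{\,2j}-1\bigr),
\end{equation*}
so the quadratic-character terms cancel in pairs. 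For $n=2m+1$ one pairs $(N_2N_3)(N_4N_5)\cdots(N_{2m}N_{2m+1})$ and obtains $2p^{m^{2}}\prod_{i=1}^{m}(p^{2i}-1)$. For $n=2m$ one pairs $(N_2N_3)\cdots(N_{2m-2}N_{2m-1})$ and is left with the lone factor $N_{2m}(1)=p^{\,m-1}(p^{\,m}-\eta((-1)^{m}))$, giving $2p^{m(m-1)}\bigl(p^{\,m}-\eta((-1)^{m})\bigr)\prod_{i=1}^{m-1}(p^{2i}-1)$, which is $|\mathrm{O}_{2m}^{+}(\mathbb{F}_p)|$ or $|\mathrm{O}_{2m}^{-}(\mathbb{F}_p)|$ according as $\eta((-1)^{m})$ equals $+1$ or $-1$. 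The remaining even type is obtained verbatim with $Q=x_1^{2}+\dots+x_{2m-1}^{2}+\epsilon x_{2m}^{2}$ for a fixed nonsquare $\epsilon$: by \cite{MR1429394} this replaces $\eta((-1)^{m})$ by $-\eta((-1)^{m})$ throughout, i.e.\ swaps $p^{m}-1\leftrightarrow p^{m}+1$. Comparing with the displayed identities completes the proof.

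The genuine content lies in Witt's extension theorem and the exact solution counts of \cite{MR1429394}; granting these, the only delicate point is the discriminant bookkeeping during the descent --- one must track, at every step, which of $\mathrm{O}^{+}$ and $\mathrm{O}^{-}$ is produced, since the sign $\eta((-1)^{m})$ interacts with the parity of $m$ in a way that has to be matched against the stated closed forms. If one is willing to cite the classical literature, the whole argument collapses to a single reference.
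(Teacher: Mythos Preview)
Your argument is correct, but the paper itself does not prove this lemma at all: it simply cites page~141 of \cite{MR1189139} and moves on. So your write-up is considerably more than what the paper does --- you supply the standard Witt-extension/orbit--stabilizer recursion together with the Lidl--Niederreiter sphere counts, whereas the paper treats the formulas as a black box. What your approach buys is self-containment and an explanation of \emph{why} the $\pm$ types differ only in the factor $p^{m}\mp 1$; what the paper's approach buys is brevity, since these orders are indeed textbook facts. Your closing remark that ``the whole argument collapses to a single reference'' is exactly what the paper elected to do.
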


\begin{proof}
This statement can be found on page 141 in ~\cite{MR1189139}.
\end{proof}

\bigskip

\section{Proof of Theorem \ref{2chainsthm}.}
\label{sec:3}

From the definition of a set $S_{2}(r)$, it is clear that it also contains pairs of $2$-paths in $E$ with dilation ratio $r$ which are degenerate. For instance, degenerate $2$-paths come if one takes $x_1=x_3$ or $y_1=y_3$. That is why we need to rule out all degenerate cases and in order to implement it we need to consider the following sets: 

\begin{equation*}
\mathcal{A}:= \left\{(x_1,x_2,x_3,y_1,y_2,y_3)\in E^6: \begin{array}{l}\lVert y_i-y_{i+1}\rVert=r\lVert x_i-x_{i+1}\rVert,\ i\in [2],\\
x_1\neq x_{2},\ x_2\neq x_3,\ x_1=x_3\end{array}\right\},
\end{equation*}

\begin{equation*}
\mathcal{B}:= \left\{(x_1,x_2,x_3,y_1,y_2,y_3)\in E^6: \begin{array}{l}\lVert y_i-y_{i+1}\rVert=r\lVert x_i-x_{i+1}\rVert,\ i\in [2],\\
x_1\neq x_{2},\ x_2\neq x_3,\ y_1=y_3\end{array}\right\},
\end{equation*}

\begin{equation*}
\mathcal{C}:= \left\{(x_1,x_2,x_3,y_1,y_2,y_3)\in E^6: \begin{array}{l}\lVert y_i-y_{i+1}\rVert=r\lVert x_i-x_{i+1}\rVert,\ i\in [2],\\
x_1\neq x_{2},\ x_2\neq x_3,\ x_1\neq x_3,\ y_1\neq y_3\end{array}\right\}.
\end{equation*}

It is easy to see that \begin{equation}
\label{eq3.1}
    \mathcal{A}\cup \mathcal{B}\cup\mathcal{C}=S_{2}(r).
\end{equation}

From the definition of sets $\mathcal{A},\mathcal{B}, \mathcal{C}$ follows that \begin{equation}
\label{eq3.2}
    \mathcal{A}\cap \mathcal{C}= \mathcal{B}\cap \mathcal{C}= \mathcal{A}\cap \mathcal{B}\cap \mathcal{C}=\varnothing.
\end{equation} 

One can see that the pairs of 2-paths in $E$ with dilation ratio $r$ is exactly the set $\mathcal{C}$ since $y_1\neq y_2$ follows from the fact that $\lVert y_1-y_2\rVert=r\lVert x_1-x_2\rVert$, $x_1\neq x_2$ and $\lVert x \rVert=0$ iff $x=(0,0)$ since $p\equiv 3 \pmod 4$. The same reasoning holds for $y_2\neq y_3$.

\smallskip

Applying  inclusion–exclusion principle to \eqref{eq3.1} and taking into account \eqref{eq3.2},  we obtain  \begin{equation}
\label{eq3.3}
    |\mathcal{C}|=|S_{2}(r)|-|\mathcal{A}|-|\mathcal{B}|+|\mathcal{A}\cap \mathcal{B}|.
\end{equation}

\medskip

Now we can explicitly compute the size of $|\mathcal{A}|, \ |\mathcal{B}|$ and $|\mathcal{A}\cap \mathcal{B}|$. Indeed, 

\medskip

\begin{equation*}
\begin{split}
     |\mathcal{A}\cap \mathcal{B}|&=\left|\left\{(x_1,x_2,x_3,y_1,y_2,y_3)\in E^6: \begin{array}{l}\lVert y_i-y_{i+1}\rVert=r\lVert x_i-x_{i+1}\rVert,\ i\in [2],\\
    x_1\neq x_{2},\ x_2\neq x_3,\ x_1=x_3,\ y_1=y_3 \end{array}\right\}\right|\\[5pt]
     &=\left|\left\{(x_1,x_2,y_1,y_2)\in E^4: \lVert y_1-y_{2}\rVert=r\lVert x_1-x_{2}\rVert,\
    x_1\neq x_{2} \right\}\right|\\[5pt]
    &=|S_1(r)|.
\end{split}
\end{equation*}

\medskip

Now we proceed to the size of $\mathcal{A}$:

\medskip

\begin{equation*}
\begin{split}
    |\mathcal{A}|&= \left|\left\{(x_1,x_2,x_3,y_1,y_2,y_3)\in E^6: \begin{array}{l}\lVert y_i-y_{i+1}\rVert=r\lVert x_i-x_{i+1}\rVert,\ i\in [2],\\
x_1\neq x_{2},\ x_2\neq x_3,\ x_1=x_3\end{array}\right\}\right|\\[5pt]
&=\left|\left\{(x_1,x_2,x_1,y_1,y_2,y_3)\in E^6: \begin{array}{l}\lVert y_1-y_{2}\rVert=\lVert y_2-y_{3}\rVert=r\lVert x_1-x_{2}\rVert,\\
x_1\neq x_{2}\end{array}\right\}\right|\\[5pt]
&=\sum \limits_{t\in \mathbb{F}_p^{*}}\left|\left\{(x_1,x_2,y_1,y_2,y_3)\in E^5: \begin{array}{l}\lVert y_1-y_{2}\rVert=\lVert y_2-y_{3}\rVert=rt,\\
\lVert x_1-x_{2}\rVert=t\end{array}\right\}\right|\\[5pt]
&=\sum \limits_{t\in \mathbb{F}_p^*} \sum \limits_{\substack{x_1,x_2,\\ y_1,y_2,y_3}}\mathds{1}_E(x_1)\mathds{1}_E(x_2)\mathds{1}_E(y_1)\mathds{1}_E(y_2)\mathds{1}_E(y_3)\mathds{1}_{S_t}(x_1-x_2)\mathds{1}_{S_{rt}}(y_1-y_2)\mathds{1}_{S_{rt}}(y_2-y_3)\\[5pt]       
&=\sum \limits_{t\in \mathbb{F}_p^*} \sum \limits_{x_1,x_2}\mathds1_E(x_1)\mathds1_E(x_2)\mathds1_{S_t}(x_1-x_2)\sum \limits_{y_1,y_2,y_3}\mathds1_E(y_1)\mathds1_E(y_2)\mathds1_E(y_3)\mathds1_{S_{rt}}(y_1-y_2)\mathds1_{S_{rt}}(y_2-y_3)\\[5pt]    
&=\sum \limits_{t\in \mathbb{F}_p^*} \nu_1(t) \nu_{2}(rt,rt).\\
\end{split}
\end{equation*}

\smallskip

In an analogous way one can compute the size of $\mathcal{B}$ and we obtain  the following equalities: 
\begin{equation}
\label{eq3.4}
\begin{split}
|\mathcal{A}\cap \mathcal{B}| = &|S_1(r)|,\quad |\mathcal{A}|=\sum \limits_{t\in \mathbb{F}_p^*} \nu_1(t) \nu_{2}(rt,rt), \\[5pt]
 & |\mathcal{B}|=\sum \limits_{t\in \mathbb{F}_p^*} \nu_1(rt) \nu_{2}(t,t),
\end{split}
\end{equation}

If we plug \eqref{eq3.4} into \eqref{eq3.3}, then we obtain
\begin{equation}
\label{eq3.5}
    |\mathcal{C}|=|S_{2}(r)|+|S_1(r)|-\sum \limits_{t\in \mathbb{F}_p^*} \nu_1(t) \nu_{2}(rt,rt)-\sum \limits_{t\in \mathbb{F}_p^*} \nu_1(rt) \nu_{2}(t,t).
\end{equation}

The upper bounds for the third and fourth terms in \eqref{eq3.5} can be obtained from Lemma \ref{lem2.6}. Indeed, since $\nu_{2}(rt,rt)\leq |E|\nu_1(rt),$ then 
\begin{align}
\label{eq3.6}
    &\sum \limits_{t\in \mathbb{F}_p^*} \nu_1(t) \nu_{2}(rt,rt)\leq |E|\sum \limits_{t\in \mathbb{F}_p^{*}}\nu_1(t)\nu_1(rt)=|E||S_1(r)|.
\end{align}

In an analogous way, we obtain \begin{align}\label{eq3.7}
    &\sum \limits_{t\in \mathbb{F}_p^*} \nu_1(rt) \nu_{2}(t,t)\leq |E|\sum \limits_{t\in \mathbb{F}_p^{*}}\nu_1(t)\nu_1(rt)=|E||S_1(r)|.
\end{align}

Using estimates \eqref{eq3.6} and \eqref{eq3.7} in  \eqref{eq3.5}, we obtain 
\begin{align*}
    |\mathcal{C}| \geq |S_{2}(r)|+|S_1(r)|-2|E||S_1(r)|.
\end{align*}

Applying Lemma \ref{lower bound for S_2(r)} we obtain that
\begin{equation} 
\label{eq3.8}
\begin{split}
    |\mathcal{C}| & \geq |E|^{-2}|S_1(r)|^2+|S_1(r)|-2|E||S_1(r)| \\[5pt]
 & = |E|^{-2}|S_1(r)|\left(|S_1(r)|+|E|^2-2|E|^3 \right).
\end{split}
\end{equation}

The following lemma shows the positivity of \eqref{eq3.8} which implies the proof of Theorem \ref{2chainsthm}.

\begin{lem}
\label{lemm3.1}
If $|E|>(\sqrt{3}+1)p$, then 
\begin{align*}
    |E|^{-2}|S_1(r)|>0 \quad  \text{and} \quad |S_1(r)|+|E|^2-2|E|^3>0.
\end{align*}
\end{lem}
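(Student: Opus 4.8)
The plan is to reduce both inequalities to the single lower bound for $|S_1(r)|$ coming from Lemma \ref{lower bound for S(r)}, and then verify the resulting polynomial inequalities in $|E|$ hold once $|E| > (\sqrt{3}+1)p$. Write $N = |E|$ for brevity. Lemma \ref{lower bound for S(r)} gives
\begin{equation*}
    |S_1(r)| \geq \left(\frac{1}{p} + \frac{1}{p^2} - \frac{1}{p^3}\right) N^4 - \frac{2N^3}{p} - (p+1)N^2.
\end{equation*}
For the first claim, $|E|^{-2}|S_1(r)| > 0$, it suffices to show the right-hand side above is positive. Factoring out $N^2/p$ (which is positive), this amounts to
\begin{equation*}
    \left(1 + \frac{1}{p} - \frac{1}{p^2}\right) N^2 - 2N - p(p+1) > 0.
\end{equation*}
The coefficient $1 + 1/p - 1/p^2$ lies in $(1, 2)$ for all primes $p$, so the left side is at least $N^2 - 2N - p(p+1)$, and I would check that $N > (\sqrt 3 + 1)p$ forces $N^2 - 2N - p(p+1) > 0$: indeed $N^2 > (\sqrt3+1)^2 p^2 = (4 + 2\sqrt3)p^2 > p^2 + p$ for large $p$, and the linear term $-2N$ is absorbed since $N^2$ grows quadratically in $p$ while $2N$ is linear. (A clean way: since $N > (\sqrt3+1)p \geq (\sqrt3+1)\cdot 3$, one has $N^2 - 2N \geq N^2(1 - 2/N) \geq N^2(1 - 2/(3(\sqrt3+1)))$, and this exceeds $p(p+1)$ comfortably.)

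For the second claim, $|S_1(r)| + N^2 - 2N^3 > 0$, I substitute the lower bound for $|S_1(r)|$ and reduce to showing
\begin{equation*}
    \left(\frac{1}{p} + \frac{1}{p^2} - \frac{1}{p^3}\right) N^4 - \left(2 + \frac{2}{p}\right) N^3 - p N^2 > 0,
\end{equation*}
where I have grouped $-2N^3/p$ with $-2N^3$ and used $-(p+1)N^2 + N^2 = -pN^2$. Dividing by $N^2/p$, this becomes
\begin{equation*}
    \left(1 + \frac{1}{p} - \frac{1}{p^2}\right) N^2 - (2p + 2) N - p^2 > 0.
\end{equation*}
Again bounding the leading coefficient below by $1$, it suffices that $N^2 - (2p+2)N - p^2 > 0$, i.e. that $N$ exceed the larger root $(p+1) + \sqrt{(p+1)^2 + p^2} = (p+1) + \sqrt{2p^2 + 2p + 1}$ of the quadratic. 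Since $\sqrt{2p^2 + 2p + 1} < \sqrt{2}\,p + 1$ for $p \geq 1$ (square both sides: $2p^2 + 2p + 1 < 2p^2 + 2\sqrt2 p + 1$, true since $2 < 2\sqrt2$), the larger root is less than $(p+1) + \sqrt2 p + 1 = (\sqrt2 + 1)p + 2 < (\sqrt3 + 1)p$ for $p$ sufficiently large, and the hypothesis $N > (\sqrt3+1)p$ closes the argument; the finitely many small primes can be checked directly, or one notes $(\sqrt3+1)p \geq (\sqrt2+1)p + 2$ already holds for all primes $p \geq 3$ since $(\sqrt3 - \sqrt2)p \geq (\sqrt3-\sqrt2)\cdot 3 > 2$.

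The only mild subtlety is bookkeeping: one must be careful that the $1/p^3$ term and the $-(p+1)N^2$ term, both negative, do not overwhelm the gain, but since the dominant term is $N^4/p$ and $N$ is of order $p$, every term is of order $p^3$ and the constants work out with room to spare — this is exactly why the constant $\sqrt3 + 1$ (rather than something smaller) appears in the hypothesis. I expect no real obstacle here; the proof is a direct estimation, and I would present it by reducing each inequality to a quadratic in $N$ as above, bounding the leading coefficient below by $1$, and comparing $N$ against the larger root.
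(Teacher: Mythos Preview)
Your approach is the same as the paper's---apply Lemma~\ref{lower bound for S(r)}, reduce to a quadratic in $N=|E|$, and compare $N$ to the larger root---and the strategy is sound. However, your simplification of the leading coefficient $1 + 1/p - 1/p^{2}$ down to $1$ is too lossy, and the arithmetic you use to close the second inequality is wrong: you assert $(\sqrt{3}-\sqrt{2})\cdot 3 > 2$, but $\sqrt{3}-\sqrt{2}\approx 0.318$, so $(\sqrt{3}-\sqrt{2})\cdot 3 \approx 0.95 < 2$. Concretely, for $p=3$ (which is allowed since $3\equiv 3\pmod 4$) the hypothesis $|E|>(\sqrt{3}+1)\cdot 3\approx 8.2$ forces $|E|=9$, and your reduced inequality $N^{2}-(2p+2)N-p^{2}>0$ becomes $81-72-9=0$, which is not strictly positive. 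So your ``check small primes directly'' escape hatch does not rescue the argument either, because the simplified quadratic itself fails there.

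The paper avoids this by keeping the full leading coefficient $p^{2}+p-1$ (equivalently, your $1+1/p-1/p^{2}$) throughout: it writes the larger root exactly as $\dfrac{p^{3}+p^{2}+\sqrt{2p^{6}+3p^{5}}}{p^{2}+p-1}$, bounds $\sqrt{2p^{6}+3p^{5}}\leq \sqrt{3}\,p^{3}$ for $p\geq 3$, and then shows the resulting function of $p$ stays strictly below $\sqrt{3}+1$. At $p=3$ this gives a root of approximately $7.52<9$, so the argument goes through. Your proof is easily repaired by retaining the full coefficient for the second inequality rather than rounding it down to $1$.
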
 
\begin{proof}
Let's start with the first inequality. Applying Lemma \ref{lower bound for S(r)}, we obtain 

\begin{equation*}
\begin{split}
    |E|^{-2}|S_1(r)|& \geq \left(\frac{1}{p}+\frac{1}{p^2}-\frac{1}{p^3}\right)|E|^2-\frac{2|E|}{p}-(p+1)\\
    &=\frac{1}{p^3}\bigg((p^2+p-1)|E|^2-2p^2|E|-p^3(p+1) \bigg)\\
    &=\frac{p^2+p-1}{p^3}\bigg(|E|-\frac{p^2-\sqrt{p^6+2p^5+p^4-p^3}}{p^2+p-1} \bigg)\\
    &\times\bigg(|E|-\frac{p^2+\sqrt{p^6+2p^5+p^4-p^3}}{p^2+p-1} \bigg).
\end{split}   
\end{equation*}
\medskip

It suffices to show that the expression in the second parenthesis is positive since it immediately implies that the expression in the first parenthesis is also positive.

\medskip

If $|E|>(\sqrt{3}+1)p$, then 

\begin{equation*}
\begin{split}
    |E|&-\frac{p^2+\sqrt{p^6+2p^5+p^4-p^3}}{p^2+p-1}\\
    &>p\bigg((\sqrt{3}+1)-\frac{p^2+\sqrt{p^6+2p^5+p^4-p^3}}{p^3+p^2-p}\bigg)\\
    &>p\bigg((\sqrt{3}+1)-\frac{p^2+\sqrt{p^6+2p^5+p^4}}{p^3+p^2-p}\bigg)\\
    &=p\bigg((\sqrt{3}+1)-\frac{p^3+2p^2}{p^3+p^2-p}\bigg).
\end{split}    
\end{equation*}

However, the function $\phi:[3,+\infty)\to \mathbb{R}$ defined by $\phi(x)=\mfrac{x^3+2x^2}{x^3+x^2-x}$ is decreasing and hence $\phi(x)\leq \phi(3)=\frac{15}{11}.$

\smallskip

Therefore, we obtain
\begin{equation*}
        |E|-\frac{p^2+\sqrt{p^6+2p^5+p^4-p^3}}{p^2+p-1}>p\bigg((\sqrt{3}+1)-\frac{15}{11} \bigg)>0.
\end{equation*}

We have shown that \begin{equation*}
    |E|^{-2}|S_1(r)|>0.
\end{equation*}

\medskip

Now we proceed to the second inequality. Let's use the Lemma \ref{lower bound for S(r)} and we obtain 
\begin{equation*}
\begin{split}
    |S_1(r)|&+|E|^2-2|E|^3\\
    &\geq\frac{|E|^2}{p^3}\bigg((p^2+p-1)|E|^2-(2p^3+2p^2)|E|-p^4\bigg)\\
    &=\frac{(p^2+p-1)|E|^2}{p^3}\bigg(|E|-\frac{p^3+p^2+\sqrt{2p^6+3p^5}}{p^2+p-1}\bigg)\\
    &\times\bigg(|E|-\frac{p^3+p^2-\sqrt{2p^6+3p^5}}{p^2+p-1}\bigg).
\end{split}    
\end{equation*}

\medskip

Again, it suffices to show that the expression in the first parenthesis is positive.

\medskip

If $|E|>(\sqrt{3}+1)p$, then 
\begin{equation*}
    |E|-\frac{p^3+p^2+\sqrt{2p^6+3p^5}}{p^2+p-1}> p\bigg((\sqrt{3}+1)- \frac{(\sqrt{3}+1)p^3+p^2}{p^3+p^2-p}\bigg).
\end{equation*}

The function $\varphi:[3,+\infty)\to \mathbb{R}$ defined by $\varphi(x)=\mfrac{(\sqrt{3}+1)x^3+x^2}{x^3+x^2-x}$ decreases on $[3,x_0]$ and increases on $[x_0,+\infty),$ where $x_0\approx 3.32$. Since $\lim \limits_{x\to +\infty}\varphi(x)=\sqrt{3}+1$, then $\varphi(x)\leq \sqrt{3}+1$.
It means that the expression in the first parenthesis is positive and hence 
\begin{equation*}
    |S_1(r)|+|E|^2-2|E|^3>0. \qedhere   
\end{equation*}
\end{proof}

\medskip

Combining Lemma \ref{lemm3.1} with \eqref{eq3.8}, we obtain that if $|E|>(\sqrt{3}+1)p$, then 
\begin{equation*}
|\mathcal{C}|= \left|\left\{(x_1,x_2,x_3,y_1,y_2,y_3)\in E^6: \begin{array}{l}\lVert y_i-y_{i+1}\rVert=r\lVert x_i-x_{i+1}\rVert,\ i\in [2],\\
x_1\neq x_{2},\ x_2\neq x_3,\ x_1\neq x_3,\ y_1\neq y_3\end{array}\right\}\right|>0,
\end{equation*}

 which completes the proof of Theorem \ref{2chainsthm}.

\bigskip

\section{Proof of Theorem \ref{4cyclesthm}.}
\label{sec:4}

It is clear that $C(r)$ also contains pairs of 4-cycles in $E$ with dilation ratio $r$ which are degenerate. For example, degenerate pairs are pairs if one takes $(x_1=x_3)\lor (x_2=x_4)\lor (y_1=y_3)\lor (y_2=y_4)$. 

\medskip

Therefore, we need to rule out these degenerate cases and show that their size is less than the size of $C(r)$. That is why we will consider the following sets:  
\begin{equation*}
\mathcal{A}_{13}:= \left\{(x_1,x_2,x_3,x_4,y_1,y_2,y_3,y_4)\in E^8: \begin{array}{l}\lVert y_i-y_{i+1}\rVert=r\lVert x_i-x_{i+1}\rVert,\ x_i\neq x_{i+1},\ i\in [3],\\
\lVert y_4-y_{1}\rVert=r\lVert x_4-x_{1}\rVert,\ x_4\neq x_{1},\ x_1= x_3\end{array}\right\},
\end{equation*}    

\begin{equation*}
    \mathcal{A}_{24}:=\left\{(x_1,x_2,x_3,x_4,y_1,y_2,y_3,y_4)\in E^8: \begin{array}{l}\lVert y_i-y_{i+1}\rVert=r\lVert x_i-x_{i+1}\rVert,\ x_i\neq x_{i+1},\ i\in [3],\\
\lVert y_4-y_{1}\rVert=r\lVert x_4-x_{1}\rVert,\ x_4\neq x_{1},\ x_2= x_4\end{array}\right\},
\end{equation*}    

\begin{equation*}
\mathcal{B}_{13}:= \left\{(x_1,x_2,x_3,x_4,y_1,y_2,y_3,y_4)\in E^8: \begin{array}{l}\lVert y_i-y_{i+1}\rVert=r\lVert x_i-x_{i+1}\rVert,\ x_i\neq x_{i+1},\ i\in [3],\\
\lVert y_4-y_{1}\rVert=r\lVert x_4-x_{1}\rVert,\ x_4\neq x_{1},\ y_1= y_3\end{array}\right\},
\end{equation*}    

\begin{equation*}
    \mathcal{B}_{24}:= \left\{(x_1,x_2,x_3,x_4,y_1,y_2,y_3,y_4)\in E^8: \begin{array}{l}\lVert y_i-y_{i+1}\rVert=r\lVert x_i-x_{i+1}\rVert,\ x_i\neq x_{i+1},\ i\in [3],\\
\lVert y_4-y_{1}\rVert=r\lVert x_4-x_{1}\rVert,\ x_4\neq x_{1},\ y_2= y_4\end{array}\right\}.
\end{equation*}   

\medskip

We will also define the set which is exactly the family of pairs of 4-cycles in $E$ with dilation ratio $r$.

\medskip

\begin{equation*}
    \mathcal{F}:= \left\{(x_1,x_2,x_3,x_4,y_1,y_2,y_3,y_4)\in E^8: \begin{array}{l}\lVert y_i-y_{i+1}\rVert=r\lVert x_i-x_{i+1}\rVert,\ i\in [3],\\
    \lVert y_4-y_{1}\rVert=r\lVert x_4-x_{1}\rVert,\\
x_i\neq x_{j},\ y_i\neq y_j,\ i\neq j\in [4]\end{array}\right\}.
\end{equation*}   

\medskip

It is clear that we have the following set equality: 
\begin{equation*}
    C(r)=\mathcal{F}\sqcup (\mathcal{A}_{13}\cup \mathcal{A}_{24}\cup\mathcal{B}_{13}\cup \mathcal{B}_{24}).    
\end{equation*}

Hence, we have 
\begin{equation}
\label{eq4.1}    
    |C(r)|=|\mathcal{F}|+|\mathcal{A}_{13}\cup \mathcal{A}_{24}\cup\mathcal{B}_{13}\cup \mathcal{B}_{24}|.    
\end{equation}

\medskip

One can trivially estimate the term $|\mathcal{A}_{13}\cup \mathcal{A}_{24}\cup\mathcal{B}_{13}\cup \mathcal{B}_{24}|$ as follows:
\begin{equation}
\label{eq4.2}    
    |\mathcal{A}_{13}\cup \mathcal{A}_{24}\cup\mathcal{B}_{13}\cup \mathcal{B}_{24}|\leq |\mathcal{A}_{13}|+|\mathcal{A}_{24}|+|\mathcal{B}_{13}|+|\mathcal{B}_{24}| 
\end{equation}

Comparing \eqref{eq4.2} with \eqref{eq4.1}, we obtain the following lower bound for the size of $\mathcal{F}$:

\medskip

\begin{equation}
\label{eq4.3}    
    |\mathcal{F}|\geq |C(r)|-|\mathcal{A}_{13}|-|\mathcal{A}_{24}|-|\mathcal{B}_{13}|-|\mathcal{B}_{24}|.
\end{equation}

\medskip

Lemma \ref{lem2.4} gives us the lower bound for $|C(r)|$ and our current goal is to find the appropriate upper bound for the size of $\mathcal{A}_{13},\ \mathcal{A}_{24},\ \mathcal{B}_{13}$ and $\mathcal{B}_{24}$. We would like to point out that we have performed the same approach in the proof of Theorem \ref{2chainsthm} and we found the right the upper bounds for $|\mathcal{A}|$ and $|\mathcal{B}|$ relying on the trivial estimates provided by Lemma \ref{lem2.6}.

\smallskip

However, this approach is fruitless in the case of $4$-cycles. More precisely, if we apply Lemma \ref{lem2.6} to estimate the size of $\mathcal{A}_{ij}$ and $\mathcal{B}_{ij}$ we will not be able to get a nontrivial exponent for the size of $E$.

\smallskip

Fortunately, this barrier can be overcome if we know the arithmetic structure of the sphere $S_t$ in $\mathbb{F}_q^d$ and this information is given by Lemma \ref{lem2.5}.

\medskip

Since $p\equiv 3 \pmod 4$, then $-1$ is a quadratic nonresidue in $\mathbb{F}_p$ since  
\begin{equation*}\legendre{-1}{p}=(-1)^{\frac{p-1}{2}}=-1,
\end{equation*}where $\left(\dfrac{a}{b}\right)$ is a Legendre symbol. Since $d=2$, then Lemma \ref{lem2.5} gives us that 
\begin{equation}
\label{eq4.4}
    |S_t|=p+1\quad \mathrm{for} \quad t\in \mathbb{F}_p^{*}.
\end{equation}

The following Lemma gives us the correct
upper bound for the size of $\mathcal{A}_{13},\ \mathcal{A}_{24},\ \mathcal{B}_{13}$ and $\mathcal{B}_{24}$.

\smallskip

\begin{lem}
\label{lem4.2}
The following inequality holds:
\begin{equation}
\label{eq4.5}
    |S_{2}(r)|\leq |\mathcal{A}_{13}|,|\mathcal{A}_{24}|, |\mathcal{B}_{13}|,|\mathcal{B}_{24}|\leq (p+1)|S_{2}(r)|.
\end{equation}
\end{lem}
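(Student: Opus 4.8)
The plan is to show, for each of the four degenerate sets $\mathcal A_{13},\mathcal A_{24},\mathcal B_{13},\mathcal B_{24}$, that there is a map onto $S_2(r)$ all of whose fibres are nonempty and have at most $p+1$ elements; summing the bounds $1\le|\text{fibre}|\le p+1$ over $S_2(r)$ then yields \eqref{eq4.5}. The reason this works is that imposing one of the degeneracies $x_1=x_3$, $x_2=x_4$, $y_1=y_3$, $y_2=y_4$ collapses the four cyclic equations defining $C(r)$ into two independent pairs of equal-length edges, so that a member of $\mathcal A_{ij}$ or $\mathcal B_{ij}$ amounts to a pair of $2$-paths with dilation ratio $r$ (an element of $S_2(r)$) together with one extra vertex of $E$ pinned to a sphere of nonzero radius --- and by \eqref{eq4.4} such a sphere has exactly $p+1$ points.

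I would carry out $\mathcal A_{13}$ first. Imposing $x_1=x_3$ turns its defining relations into
\[
\lVert y_1-y_2\rVert=\lVert y_2-y_3\rVert=r\lVert x_1-x_2\rVert,\qquad
\lVert y_3-y_4\rVert=\lVert y_4-y_1\rVert=r\lVert x_1-x_4\rVert,
\]
together with $x_1\ne x_2$ and $x_1\ne x_4$. Define $\Phi\colon\mathcal A_{13}\to S_2(r)$ by $\Phi(x_1,x_2,x_3,x_4,y_1,y_2,y_3,y_4)=(x_2,x_1,x_4,y_2,y_3,y_4)$; the two relations above show directly that the image lies in $S_2(r)$. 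Given $(a,b,c,u,v,w)\in S_2(r)$ (so $\lVert u-v\rVert=r\lVert a-b\rVert$, $\lVert v-w\rVert=r\lVert b-c\rVert$, $a\ne b$, $b\ne c$), any preimage must have $x_2=a$, $x_1=x_3=b$, $x_4=c$, $y_2=u$, $y_3=v$, $y_4=w$, so that $y_1$ is the only free coordinate, constrained exactly by $\lVert y_1-u\rVert=r\lVert a-b\rVert$ and $\lVert y_1-w\rVert=r\lVert b-c\rVert$ (the remaining relations of $\mathcal A_{13}$ hold automatically because $(a,b,c,u,v,w)\in S_2(r)$). The value $y_1=v$ always satisfies both, so the fibre is nonempty; and since $r\ne 0$, $a\ne b$ and $p\equiv 3\pmod 4$, the radius $r\lVert a-b\rVert$ is nonzero, so the fibre lies inside the translate $u+S_{r\lVert a-b\rVert}$, which by \eqref{eq4.4} has exactly $p+1$ points. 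Hence $|S_2(r)|\le|\mathcal A_{13}|\le (p+1)|S_2(r)|$.

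The remaining three sets are treated in the same fashion, the only change being which coordinate plays the role of the free one and which reindexing of the surviving six coordinates produces an element of $S_2(r)$: for $\mathcal A_{24}$ (impose $x_2=x_4$) forget $y_4$; for $\mathcal B_{13}$ (impose $y_1=y_3$) forget $x_3$; for $\mathcal B_{24}$ (impose $y_2=y_4$) forget $x_4$. In the two $\mathcal B$-cases one first observes that the degeneracy forces hidden equalities among the $x$-distances --- e.g.\ $y_1=y_3$ gives $\lVert x_1-x_2\rVert=\lVert x_2-x_3\rVert$ and $\lVert x_3-x_4\rVert=\lVert x_4-x_1\rVert$ --- which is exactly what makes the six surviving coordinates a genuine element of $S_2(r)$; the fibre is again a nonempty subset of a sphere of nonzero radius, hence of size at most $p+1$. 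Alternatively, the $\mathcal B$-cases follow from the $\mathcal A$-cases by the involution on $E^{8}$ interchanging the $x$- and $y$-blocks: since $p\equiv 3\pmod 4$ the conditions $x_i\ne x_j$ and $y_i\ne y_j$ correspond to each other, this involution is a bijection $\mathcal B_{13}(r)\leftrightarrow\mathcal A_{13}(r^{-1})$, and the same involution restricted to $2$-paths gives $|S_2(r)|=|S_2(r^{-1})|$.

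The whole argument is essentially bookkeeping; the two points that genuinely need checking are (i) that, after dropping the correct coordinate and reindexing, the surviving sextuple really lies in $S_2(r)$, and (ii) that the fibre is never empty --- here the ``diagonal'' choice, namely the middle vertex of the relevant $2$-path, always works, but this has to be verified in each case. I do not expect a real obstacle: all the arithmetic content is already isolated in \eqref{eq4.4}, i.e.\ in the fact that $p\equiv 3\pmod 4$ forces every sphere of nonzero radius in $\mathbb F_p^2$ to have exactly $p+1$ points.
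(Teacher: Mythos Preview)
Your proposal is correct and follows essentially the same approach as the paper: define a map from each degenerate set onto $S_2(r)$ by forgetting one coordinate, exhibit an explicit preimage to show surjectivity, and bound each fibre by the size of a sphere of nonzero radius, which is $p+1$ by \eqref{eq4.4}. The only cosmetic differences are your choice of which coordinate to drop (you forget $y_1$ for $\mathcal A_{13}$ whereas the paper forgets $y_3$) and your additional involution remark relating the $\mathcal B$-cases to the $\mathcal A$-cases, which the paper does not mention.
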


\begin{proof}

For concreteness, we will prove this inequality only for $|\mathcal{A}_{13}|$. The remaining inequalities can be proven in an analogous way. 

\smallskip

From the definition of $\mathcal{A}_{13}$ it follows that $|\mathcal{A}_{13}|=|\widehat{\mathcal{A}}_{13}|,$ where 

\begin{equation*}
    \widehat{\mathcal{A}}_{13}:= \left\{(x_1,x_2,x_4,y_1,y_2,y_3,y_4)\in E^7: \begin{array}{l}\lVert y_1-y_{2}\rVert=\lVert y_2-y_{3}\rVert=r\lVert x_1-x_{2}\rVert,\\ \lVert y_3-y_{4}\rVert=\lVert y_1-y_{4}\rVert=r\lVert x_1-x_{4}\rVert,\\
x_1\neq x_{2},\ x_1\neq x_4\end{array}\right\}.
\end{equation*}   

\smallskip

Consider the function $f: \widehat{\mathcal{A}}_{13}\to S_{2}(r)$ defined as 
\begin{equation*}
    (x_1,x_2,x_4,y_1,y_2,y_3,y_4)\xmapsto{f} (x_4,x_1,x_2,y_4,y_1,y_2).
\end{equation*}

\medskip

We notice that $f$ is surjective. Indeed, for each $(x,y,z,x',y',z')\in S_{2}(r)$ its preimage under $f$ is $(y,z,x,y',z',y',x')\in \widehat{\mathcal{A}}_{13}$. It immediately implies that $|S_{2}(r)|\leq|\widehat{\mathcal{A}}_{13}|$ and hence $|S_{2}(r)|\leq|\mathcal{A}_{13}|.$

\smallskip

Now we proceed to the proof of the RHS inequality in $\eqref{eq4.5}$. It suffices to show that for each $y\in S_{2}(r)$ the inequality $|f^{-1}(\{y\})|\leq p+1$ holds. Let's fix an arbitrary $y\in S_{2}(r)$, then $y=(y_1,y_2,y_3,y_4,y_5,y_6)$ and consider it's preimage, i.e. the set $f^{-1}(\{y\})$. We already know that $f^{-1}(\{y\})\neq \varnothing$ due to the surjectivity of $f$. Consider an element $x_0:=(y_2,y_3,y_1,y_5,y_6,y_5,y_4)$ and one can check that $x_0\in f^{-1}(\{y\})$. 

\smallskip

Choose arbitrary element $x\in f^{-1}(\{y\})$, then $x=(y_2,y_3,y_1,y_5,y_6,\alpha,y_4)$ with $\lVert\alpha-y_4 \rVert=r\lVert y_1-y_2\rVert$. We see that $\lVert y_1-y_2\rVert\neq 0$ since $x_0\in \widehat{\mathcal{A}}_{13}$. 

\smallskip

We have shown that for arbitrary $x\in f^{-1}(\{y\})$ we have $x-x_0=(\alpha-y_5)\cdot \vec{\mathbf{e}}_6$ with $\lVert \alpha-y_4\rVert=t$, where $\vec{\mathbf{e}}_6=(0,0,0,0,0,1,0)$ and $t:=r\lVert y_1-y_2\rVert\neq 0$. 

\smallskip

That can be written as the following set containment: 
\begin{equation}
\label{eq4.6}
    f^{-1}(\{y\})\subseteq \{x_0+\vec{\mathbf{e}}_6\cdot (\alpha-\pi_5(y)): \alpha\in \pi_4(y)+S_t\},
\end{equation}
where $\pi_j(y)$ is the $j$th coordinate of $y$ and $\pi_4(y)+S_t$ means the translation of sphere $S_t$ by $\pi_4(y)$. Containment \eqref{eq4.6} implies that 
\begin{equation}
\label{eq4.7}
    |f^{-1}(\{y\})|\leq |\{x_0+\vec{\mathbf{e}}_6\cdot (\alpha-\pi_5(y)): \alpha\in \pi_4(y)+S_t\}|.
\end{equation}

\smallskip

However, the RHS term in \eqref{eq4.7} is at most
\begin{equation}
\label{eq4.8}
    |\pi_4(y)+S_t|=|S_t|.
\end{equation}

\smallskip

Comparing \eqref{eq4.7} and \eqref{eq4.8} with \eqref{eq4.4}, we obtain 
\begin{equation*}
    |f^{-1}(\{y\})|\leq p+1,
\end{equation*}

which completes the proof of Lemma \ref{lem4.2}.
\end{proof}

\medskip

Plugging inequalities \eqref{eq4.5} to the inequality \eqref{eq4.3}, we obtain the following lower bound for the size of $\mathcal{F}$:
\begin{equation*}
    |\mathcal{F}|\geq |C(r)|-4(p+1)|S_{2}(r)|.
\end{equation*}

Applying Lemma \ref{lem2.4} we obtain 
\begin{equation} 
\label{eq4.9}
\begin{split}
    |\mathcal{F}| & \geq |E|^{-4}|S_{2}(r)|^2-4(p+1)|S_{2}(r)| \\[5pt]
 & = |E|^{-4}|S_{2}(r)|\left(|S_{2}(r)|-4(p+1)|E|^4 \right).
\end{split}
\end{equation}

\medskip

We shall show that $|\mathcal{F}|>0$ if $|E|\gg p^{\frac{3}{2}}$.

\medskip

\begin{lem}
\label{lem4.3}
If $|E|>4\sqrt{3}p^{\frac{3}{2}}$, then 
\begin{align*}
    |E|^{-4}|S_{2}(r)|>0 \quad \text{and} \quad |S_{2}(r)|-4(p+1)|E|^4>0.
\end{align*}
\end{lem}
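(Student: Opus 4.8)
The strategy is to feed the two lower bounds already in hand into one another and then check that the threshold $|E| > 4\sqrt{3}\,p^{3/2}$ is comfortably large enough. The relevant inputs are Lemma~\ref{lower bound for S_2(r)}, giving $|S_2(r)| \ge |E|^{-2}|S_1(r)|^2$, and Lemma~\ref{lower bound for S(r)}, giving the explicit quartic-in-$|E|$ lower bound for $|S_1(r)|$. No new idea beyond these two estimates should be needed; the work is entirely in the bookkeeping.

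First I would dispose of the easy inequality. Since $4\sqrt{3}\,p^{3/2} > (\sqrt{3}+1)p$ for every prime $p$, Lemma~\ref{lemm3.1} already yields $|S_1(r)| > 0$, whence $|S_2(r)| \ge |E|^{-2}|S_1(r)|^2 > 0$ and so $|E|^{-4}|S_2(r)| > 0$. (This also follows a posteriori from the bound on $|S_1(r)|$ obtained below.)

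For the second inequality, by Lemma~\ref{lower bound for S_2(r)} it suffices to show $|E|^{-2}|S_1(r)|^2 > 4(p+1)|E|^4$, i.e. $|S_1(r)| > 2\sqrt{p+1}\,|E|^3$. Using Lemma~\ref{lower bound for S(r)} together with the crude estimate $\tfrac1p + \tfrac1{p^2} - \tfrac1{p^3} \ge \tfrac1p$, this reduces, after dividing by $|E|^2>0$, to the single scalar inequality
\begin{equation*}
    \frac{|E|^2}{p} - \frac{2|E|}{p} - (p+1) > 2\sqrt{p+1}\,|E|.
\end{equation*}
To establish this I would use $|E| > 4\sqrt{3}\,p^{3/2}$ to write $\tfrac{|E|^2}{p} > 4\sqrt{3}\,\sqrt{p}\,|E|$, bound $\sqrt{p+1} \le \sqrt{2}\,\sqrt{p}$, and observe that the desired inequality then follows once $(4\sqrt{3}-2\sqrt{2})\sqrt{p}\,|E| \ge \tfrac{2|E|}{p} + (p+1)$; since $4\sqrt{3}-2\sqrt{2} > 4$ while both terms on the right are of strictly smaller order than $\sqrt{p}\,|E|$ when $|E| \gg p^{3/2}$, this holds with room to spare. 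Then $|S_1(r)| > 2\sqrt{p+1}\,|E|^3$, hence $|S_1(r)|^2 > 4(p+1)|E|^6$, and multiplying by $|E|^{-2}$ and invoking Lemma~\ref{lower bound for S_2(r)} gives $|S_2(r)| - 4(p+1)|E|^4 > 0$.

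The only real obstacle is this last verification: one must check that the constant $4\sqrt{3}$ — rather than the ``morally correct'' value $2$ dictated by the leading term $|E|^4/p$ alone — really does dominate all the lower-order error terms uniformly in $p$, including small primes, so that no case analysis on $p$ is required. Because the margin is generous, this is routine arithmetic; it is what forces the threshold to be $4\sqrt 3\,p^{3/2}$ rather than something closer to $2p^{3/2}$.
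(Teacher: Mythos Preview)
Your proposal is correct and follows essentially the same route as the paper: both arguments combine Lemma~\ref{lower bound for S(r)} with Lemma~\ref{lower bound for S_2(r)} and then verify a scalar inequality in $|E|$ and $p$. The only cosmetic difference is that the paper first isolates the clean intermediate bound $|S_1(r)| > |E|^4/(3p)$ (valid already for $|E| > (\sqrt{3}+1)p$), which reduces the final check to the single inequality $|E|^2 > 36p^2(p+1)$, whereas you take the square root first and work with $|S_1(r)| > 2\sqrt{p+1}\,|E|^3$ directly; both amount to the same verification.
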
 

\begin{proof}
    Lemma \ref{lemm3.1} claims that if $|E|>(\sqrt{3}+1)p$, then $|E|^{-2}|S_1(r)|>0$. Applying Lemma $\ref{lower bound for S_2(r)}$, we obtain that $|E|^{-4}|S_{2}(r)|>0$.  

\medskip
    
    Lemma \ref{lower bound for S(r)} implies that 
    \begin{equation*}
        |S_1(r)|> \frac{|E|^4}{p}-\frac{2|E|^3}{p}-(p+1)|E|^2.
    \end{equation*}
    
    Therefore, we obtain
    \begin{equation}
    \label{eq4.10}
        |S_1(r)|> \frac{|E|^4}{3p},
    \end{equation}
    since \begin{equation*}
        \frac{|E|^4}{3p}>\frac{2|E|^3}{p}\quad \mathrm{and} \quad \frac{|E|^4}{3p}>(p+1)|E|^2    
    \end{equation*} provided that $|E|>(\sqrt{3}+1)p$. 
    
    Inequality \eqref{eq4.10} combined with Lemma \ref{lower bound for S_2(r)} implies that 
    \begin{align*}
        |S_{2}(r)|-4(p+1)|E|^4&>\frac{|E|^6}{9p^2}-4(p+1)|E|^4\\[5pt]
        &=\frac{|E|^4}{9p^2}\bigg(|E|^2-36p^2(p+1)\bigg).
    \end{align*}
    
    One can check that if $|E|>4\sqrt{3}p^{\frac{3}{2}}$, then $|E|^2>36p^2(p+1)$. Moreover, the first inequality $|E|^{-4}|S_{2}(r)|>0$ also holds if $|E|>4\sqrt{3}p^{\frac{3}{2}}$ since $4\sqrt{3}p^{\frac{3}{2}}>(\sqrt{3}+1)p$.  
\end{proof}

Combining Lemma \ref{lem4.3} with \eqref{eq4.9}, we obtain that if $|E|>4\sqrt{3}p^{\frac{3}{2}}$, then $|\mathcal{F}|>0$ which completes the proof of Theorem \ref{4cyclesthm}.

\bigskip

\section{Proof of Theorem \ref{trianglesthm}.}
\label{sec:5}

In this section we will obtain a nontrivial estimate for size of $E\subset \mathbb{F}_p^2$ such that it contains a pair of 3-cycles in $E$ with dilation ratio $r\in (\mathbb{F}_p)^2\setminus \{0\}$. The approach will be somewhat different and it relies on the introducing certain counting function and investigating its $L^3$-norm.

\medskip

If $r\in (\mathbb{F}_p)^2\setminus \{0\}$,\ $z\in \mathbb{F}_p^2$ and $\theta\in \mathrm{O}_2(\mathbb{F}_p)$, then consider the counting function defined as follows:
\begin{equation}
\label{eq5.1}
    \lambda_{r,\theta}(z):=|\{(u,v)\in E^2: u-\sqrt{r}\theta v=z\}|
\end{equation} and the cube of its $L^3$-norm 
\begin{equation*}
        \norm{\lambda_{r,\theta}(z)}_3^3:=\sum \limits_{\theta,z} \lambda^3_{r,\theta}(z).
\end{equation*}

One can see that 
\begin{equation*}
    \lambda^3_{r,\theta}(z)=|\{(u_1,u_2,u_3,v_1,v_2,v_3)\in E^6: u_i-\sqrt{r}\theta v_i=z,\ i\in [3]\}|.
\end{equation*}

Hence, we obtain
\begin{equation*}
\begin{split}
    \norm{\lambda_{r,\theta}(z)}_3^3&=\sum \limits_{\theta\in \mathrm{O}_2(\mathbb{F}_p)}\sum \limits_{z\in \mathbb{F}_p^2}|\{(u_1,u_2,u_3,v_1,v_2,v_3)\in E^6: u_i-\sqrt{r}\theta v_i=z,\ i\in [3]\}|\\[5pt]
    &=\sum \limits_{\theta\in \mathrm{O}_2(\mathbb{F}_p)}|\{(u_1,u_2,u_3,v_1,v_2,v_3)\in E^6: u_1-\sqrt{r}\theta v_1=u_2-\sqrt{r}\theta v_2=u_3-\sqrt{r}\theta v_3\}|.    
\end{split}    
\end{equation*}

If we introduce the following notation: \begin{equation*}
    \Lambda_{\theta}(r):=\left\{(u_1,u_2,u_3,v_1,v_2,v_3)\in E^6: u_1-\sqrt{r}\theta v_1=u_2-\sqrt{r}\theta v_2=u_3-\sqrt{r}\theta v_3\right\},
\end{equation*} 
then $\norm{\lambda_{r,\theta}(z)}_3^3=\sum \limits_{\theta}|\Lambda_{\theta}(r)|.$

Consider the following subset of $\Lambda_{\theta}(r)$, where $v_i$'s are pairwise distinct: 
\begin{equation*}
    N_{\theta}(r):= \left\{(u_1,u_2,u_3,v_1,v_2,v_3)\in E^6: \begin{array}{l} u_i-u_j=\sqrt{r}\theta (v_i-v_j),\\
v_i\neq v_j,\ i\neq j\in [3] \end{array}\right\}.
\end{equation*}   

Applying inclusion-exclusion principle one can compute the size of $N_{\theta}(r)$ explicitly. Indeed,

\begin{equation}
\label{eq5.2}
\begin{split}
    |N_{\theta}(r)|&=|\Lambda_{\theta}(r)|-\sum \limits_{1\leq k<l\leq 3}\left|\left\{(u_1,u_2,u_3,v_1,v_2,v_3)\in E^6: \begin{array}{l} u_i-u_j=\sqrt{r}\theta (v_i-v_j),\\
i\neq j\in [3],\ v_k=v_l \end{array}\right\} \right|\\[5pt]
&+2\left|\left\{(u_1,u_2,u_3,v_1,v_2,v_3)\in E^6: \begin{array}{l} u_i-u_j=\sqrt{r}\theta (v_i-v_j),\ i\neq j\in [3],\\
v_1=v_2=v_3  \end{array}\right\} \right|.
\end{split}
\end{equation}

One can show that 

\smallskip

\begin{equation}
\label{eq5.3}
    \left|\left\{(u_1,u_2,u_3,v_1,v_2,v_3)\in E^6: \begin{array}{l} u_i-u_j=\sqrt{r}\theta (v_i-v_j),\\
i\neq j\in [3],\ v_k=v_l \end{array}\right\} \right|\\=\sum \limits_{z\in \mathbb{F}_p^2}\lambda_{r,\theta}^2(z).
\end{equation} 

We will prove $\eqref{eq5.3}$ only for $(k,l)=(1,2)$ since the remaining two cases can be done in an analogous way. 

\medskip
    
Indeed, if $(k,l)=(1,2)$, then
\begin{equation*}
\begin{split}
\left|\left\{(u_1,u_2,u_3,v_1,v_2,v_3)\in E^6: \begin{array}{l} u_i-u_j=\sqrt{r}\theta (v_i-v_j),\\[5pt]
i\neq j\in [3],\ v_1=v_2 \end{array}\right\} \right|\\[5pt]
\end{split}    
\end{equation*}

\begin{equation*}
\begin{split}
    &=\left|\left\{(u_1,u_3,v_1,v_3)\in E^4: u_1-\sqrt{r}\theta v_1=u_3-\sqrt{r}\theta v_3\right\}\right|\\[5pt]
    &=\sum_{z\in \mathbb{F}_p^2}|\{(u_1,u_3,v_1,v_3)\in E^4: u_1-\sqrt{r}\theta v_1=u_3-\sqrt{r}\theta v_3=z\}|\\
\end{split}
\end{equation*}

\begin{equation*}
\begin{split}
    =\sum_{z\in \mathbb{F}_p^2}|\{(u_1,v_1)\in E^2: u_1-\sqrt{r}\theta v_1&=z\}|\times |\{(u_3,v_3)\in E^2: u_3-\sqrt{r}\theta v_3=z\}|\\
    &=\sum \limits_{z\in \mathbb{F}_p^2}\lambda_{r,\theta}^2(z).
\end{split}
\end{equation*}

One can verify that 

\smallskip

\begin{equation}
\label{eq5.4}
\left|\left\{(u_1,u_2,u_3,v_1,v_2,v_3)\in E^6: \begin{array}{l} u_i-u_j=\sqrt{r}\theta (v_i-v_j),\ i\neq j\in [3],\\
v_1=v_2=v_3  \end{array}\right\} \right|=|E|^2.
\end{equation}

\smallskip

Therefore, if we plug $\eqref{eq5.3}$ and $\eqref{eq5.4}$ into $\eqref{eq5.2}$, we obtain
\begin{equation}
\label{eq5.5}
    |N_{\theta}(r)|=|\Lambda_{\theta}(r)|-3\sum \limits_{z\in \mathbb{F}_p^2}\lambda_{r,\theta}^2(z)+2|E|^2.
\end{equation}

Summing $\eqref{eq5.5}$ over all $\theta \in \mathrm{O}_2(\mathbb{F}_p)$, we obtain
\begin{equation}
\label{eq5.6}
\begin{split}
    \sum \limits_{\theta}|N_{\theta}(r)|&=\sum \limits_{\theta}|\Lambda_{\theta}(r)|-3\sum \limits_{\theta,z}\lambda_{r,\theta}^2(z)+2|E|^2\times |\mathrm{O}_2(\mathbb{F}_p)|\\[5pt]
    &=\sum \limits_{\theta,z}\lambda_{r,\theta}^3(z)-3\sum \limits_{\theta,z}\lambda_{r,\theta}^2(z)+2|E|^2\times |\mathrm{O}_2(\mathbb{F}_p)|.
\end{split}
\end{equation}

One can check that for arbitrary $\theta\in \mathrm{O}_2(\mathbb{F}_p)$ the following set containment holds: 
\begin{equation}
\label{eq5.7}
    N_{\theta}(r)\subset \mathcal{T}, 
\end{equation}
where 
\begin{equation*}
\mathcal{T}:=\left\{(u_1,u_2,u_3,v_1,v_2,v_3)\in E^6: \begin{array}{l} \lVert u_i-u_j\rVert=r\lVert v_i-v_j\rVert,\ v_i\neq v_j,\\
u_i\neq u_j,\ i\neq j\in [3]  \end{array}\right\}.
\end{equation*}

One can see that $\mathcal{T}$ is exactly the family of pairs of 3-cycles in $E$ with dilation ratio $r\in (\mathbb{F}_p)^2\setminus \{0\}$.

\medskip

Containment $\eqref{eq5.7}$ implies that

\begin{equation*}
|\mathcal{T}|\geq \frac{1}{|\mathrm{O}_2(\mathbb{F}_p)|}\sum \limits_{\theta}|N_{\theta}(r)|. 
\end{equation*}

Applying $\eqref{eq5.6}$, we obtain 

\begin{equation}
\label{eq5.8}
    |\mathcal{T}|\geq \frac{1}{|\mathrm{O}_2(\mathbb{F}_p)|}\bigg(\sum \limits_{\theta,z}\lambda_{r,\theta}^3(z)-3\sum \limits_{\theta,z}\lambda_{r,\theta}^2(z)\bigg).
\end{equation}

In other words, we managed to obtain the lower bound for the size of $\mathcal{T}$ in terms of $L_2,L_3$-norms of $\lambda_{r,\theta}(z)$ and the size of $\mathrm{O}_2(\mathbb{F}_p)$. More precisely, inequality $\eqref{eq5.8}$ can be rewritten in the following equivalent way:
\begin{equation}
\label{eq5.9}
    |\mathcal{T}|\geq \frac{1}{|\mathrm{O}_2(\mathbb{F}_p)|}\bigg(\norm{\lambda_{r,\theta}(z)}_3^3-3\norm{\lambda_{r,\theta}(z)}_2^2\bigg).
\end{equation}

\smallskip

It remains to show that the RHS in \eqref{eq5.9} is positive. 
The lower bound for $L_3$-norm of $\lambda_{r,\theta}(z)$ can be obtained by means of Hölder's inequality. Indeed,

\begin{equation}
\label{eq5.10}
    \sum \limits_{z\in \mathbb{F}_p^2}\lambda_{r,\theta}(z)\leq \bigg(\sum \limits_{z\in \mathbb{F}_p^2}\lambda_{r,\theta}^3(z)\bigg)^{\frac{1}{3}}\times \bigg(\sum \limits_{z\in\mathbb{F}_p^2}1 \bigg)^{\frac{2}{3}}.
\end{equation}

From the definition of $\lambda_{r,\theta}(z)$ it follows that $\sum \limits_{z\in \mathbb{F}_p^2}\lambda_{r,\theta}(z)=|E|^2$ and taking this into account we can rewrite $\eqref{eq5.10}$ in the following way: 
\begin{equation}
\label{eq5.11}
    \sum\limits_{z\in \mathbb{F}_p^2}\lambda_{r,\theta}^3(z)\geq \frac{|E|^6}{p^4}.
\end{equation}

Summing inequality $\eqref{eq5.11}$ over all $\theta\in \mathrm{O}_2(\mathbb{F}_p)$, we obtain the following lower bound for $L_3$-norm: 

\begin{equation}
\label{eq5.12}
    \norm{\lambda_{r,\theta}(z)}_3^3=\sum \limits_{\theta,z}\lambda_{r,\theta}^3(z)\geq \frac{|E|^6}{p^3},
\end{equation} since $|\mathrm{O}_2(\mathbb{F}_p)|\geq p$. 

\medskip

The inequality \eqref{eq5.9} can be written as follows:
\begin{equation}
\label{eq5.13}
\begin{split}
    |\mathcal{T}|&\geq \frac{1}{|\mathrm{O}_2(\mathbb{F}_p)|}\bigg(\sum \limits_{\theta,z}\lambda_{r,\theta}^3(z)-3\sum \limits_{\theta,z}\lambda_{r,\theta}^2(z)\bigg)\\
    &=\frac{1}{|\mathrm{O}_2(\mathbb{F}_p)|}\bigg(\underbrace{\frac{1}{2}\sum \limits_{\theta,z}\lambda_{r,\theta}^3(z)-3\sum \limits_{ \lambda\geq 6}\lambda_{r,\theta}^2(z)}_{=\textup{I}}+\underbrace{\frac{1}{2}\sum \limits_{\theta,z}\lambda_{r,\theta}^3(z)-3\sum \limits_{\lambda <6}\lambda_{r,\theta}^2(z)}_{=\textup{II}}\bigg).
\end{split}
\end{equation}

We see that \textup{I} is nonnegative. Indeed,

\begin{equation*}
    \textup{I}\geq \frac{1}{2}\sum_{\lambda\geq 6}\lambda^3_{r,\theta}(z)-3\sum_{\lambda\geq 6}\lambda^2_{r,\theta}(z)=\sum_{\lambda \geq 6}\frac{\lambda^2_{r,\theta}(z)}{2}\big(\lambda_{r,\theta}(z)-6\big)\geq 0.
\end{equation*}

Moreover, we have the following lower bound for $\textup{II}$:
\begin{equation*}
\begin{split}
   \textup{II}&\geq \frac{1}{2}\sum_{\theta,z}\lambda^3_{r,\theta}(z)-108\sum_{\lambda<6}1\\
   &\geq \frac{1}{2}\sum_{\theta,z}\lambda^3_{r,\theta}(z)-108\times |\mathbb{F}_p^2|\times |\mathrm{O}_2(\mathbb{F}_p)|\\
   &\geq \frac{|E|^6}{2p^3}-324p^3.
\end{split}
\end{equation*}

\smallskip

Combining lower bounds for \textup{I} and \textup{II}, we obtain
\begin{equation}
\label{eq5.14}
    \textup{I}+\textup{II}\geq \frac{|E|^6}{2p^3}-324p^3.
\end{equation}

We know that $|\mathrm{O}_2(\mathbb{F}_p)|>0$, then combining equations \eqref{eq5.13} and \eqref{eq5.14}, we obtain 
\begin{equation*}
    |\mathcal{T}|\geq \frac{1}{|\mathrm{O_2}(\mathbb{F}_p)|}\Bigg(\frac{|E|^6}{2p^3}-324p^3\Bigg).
\end{equation*}

It implies that if $|E|\geq 3p$, then $\mfrac{|E|^6}{2p^3}-324p^3>0$ and thus $|\mathcal{T}|>0$ and it completes the proof of Theorem \ref{trianglesthm}.

\bigskip

\section{Proof of Theorem \ref{ptsconfigthm}.}
\label{sec:6}

In this section we will extend Theorem \ref{trianglesthm} to the case of $d$-simplexes. The approach will be analogous to the one which we implemented in the course of the proof of Theorem \ref{trianglesthm} but with minor modifications. We will investigate the size of $\mathrm{O}_d(\mathbb{F}_p)$ and $L^{d+1}$-norm of the counting function $\lambda_{r,\theta}(z)$ which was defined in $\eqref{eq5.1}$.

\medskip

We will show that if $E\subset \mathbb{F}_p^d$ and  $|E|\gg_d p^{\frac{d}{2}}$, then $|\mathcal{P}|>0$, where
\begin{equation*}
    \mathcal{P}:= \left\{(u_1,\dots,u_{d+1},v_1,\dots,v_{d+1})\in E^{2d+2}: \begin{array}{l}\lVert u_i-u_{j}\rVert=r\lVert v_i-v_j\rVert,\ v_i\neq v_j,\\ 
u_i\neq u_j,\ i\neq j\in [d+1]\end{array}\right\}.
\end{equation*}

For the counting function introduced in $\eqref{eq5.1}$ we consider it's $L^{d+1}$-norm:

\begin{gather} 
\label{eq6.1}
\lVert\lambda_{r,\theta}(z)\rVert_{d+1}^{d+1}:=\sum_{\theta,z}\lambda_{r,\theta}^{d+1}(z) \\[5pt]
\nonumber
\begin{aligned}
&=\sum_{\theta,z}
  \left|\left\{
  (u_1,\dots,u_{d+1},v_1,\dots,v_{d+1}) \in E^{2d+2}: 
   u_1-\sqrt{r}\theta v_1 = \dots = u_{d+1}-\sqrt{r}\theta v_{d+1} = z\right\} \right|\\[5pt]
&=\sum_{\theta}
  \left|\left\{
  (u_1,\dots,u_{d+1},v_1,\dots,v_{d+1}) \in E^{2d+2}: 
  u_i-u_j=\sqrt{r}\theta(v_i-v_j),\ 
  1\leq i<j\leq d+1
  \right\}\right|. 
\end{aligned}
\end{gather}

We let $\Lambda_{\theta}(r)$ denote the set 
\begin{equation*}
     \left\{(u_1,\dots,u_{d+1},v_1,\dots,v_{d+1})\in E^{2d+2}: \begin{array}{l} u_i-u_{j}=\sqrt{r}\theta(v_i-v_j),\\ 
1\leq i<j\leq d+1\end{array}\right\}.
\end{equation*}

Therefore, \eqref{eq6.1} can be rewritten in the following way: 
\begin{equation}
\label{eq6.2}
    \norm{\lambda_{r,\theta}(z)}_{d+1}^{d+1}:=\sum \limits_{\theta}|\Lambda_{\theta}(r)|.
\end{equation}

In $\Lambda_{\theta}(r)$ we extract the subset where $v_i\neq v_j$ for $i\neq j$.
\begin{equation*}
     N_{\theta}(r):=\left\{(u_1,\dots,u_{d+1},v_1,\dots,v_{d+1})\in E^{2d+2}: \begin{array}{l} u_i-u_{j}=\sqrt{r}\theta(v_i-v_j),\ v_i\neq v_j,\\ 
i\neq j\in [d+1]\end{array}\right\}.
\end{equation*}

\begin{remark}
One can check that if $\mathbf{u}=\sqrt{r}\theta \mathbf{v}$ for $\theta\in \mathrm{O}_d(\mathbb{F}_p)$, then $\lVert \mathbf{u}\rVert=r\lVert \mathbf{v}\rVert$.
\end{remark}

This remark immediately implies that for each $\theta \in \mathrm{O}_d(\mathbb{F}_p)$ we have $N_{\theta}(r)\subset \mathcal{P}.$

\medskip

Hence, we have 
\begin{equation}
\label{eq6.3}
    |\mathcal{P}|\geq \frac{1}{|\mathrm{O}_d(\mathbb{F}_p)|}\sum \limits_{\theta}|N_{\theta}(r)|.
\end{equation}

For each pair $(k,l)$ such that $1\leq k<l\leq d+1$, we define the following set: 
\smallskip
\begin{equation*}
     A_{kl}:=\left\{(u_1,\dots,u_{d+1},v_1,\dots,v_{d+1})\in E^{2d+2}: \begin{array}{l} u_i-u_{j}=\sqrt{r}\theta(v_i-v_j),\\ 
1\leq i<j\leq d+1,\ v_k=v_l\end{array}\right\}.
\end{equation*}

\smallskip

One can see that the following set equality holds:
\begin{equation*}
    \Lambda_{\theta}(r)\setminus \bigcup_{1\leq k<l\leq d+1}A_{kl}=N_{\theta}(r).
\end{equation*}

Applying Bonferroni inequality, we obtain
\smallskip
\begin{equation}
\label{eq6.4}
    |N_{\theta}(r)|\geq |\Lambda_{\theta}(r)|-\sum \limits_{1\leq k<l\leq d+1}|A_{kl}|.
\end{equation}

One can show that for each such pair $(k,l)$, we have 
\begin{equation}
\label{eq6.5}
    |A_{kl}|=\sum \limits_{z\in \mathbb{F}_p^d}\lambda_{r,\theta}^d(z).
\end{equation}

Plugging $\eqref{eq6.5}$ into inequality $\eqref{eq6.4}$, we obtain 
\begin{equation}
\label{eq6.6}
    |N_{\theta}(r)|\geq |\Lambda_{\theta}(r)|-\binom{d+1}{2}\sum \limits_{z\in \mathbb{F}_p^d}\lambda_{r,\theta}^d(z).
\end{equation}

Summing $\eqref{eq6.6}$ over all $\theta\in \mathrm{O}_d(\mathbb{F}_p)$, we obtain the following inequality:
\medskip
\begin{equation}
\label{eq6.7}
    \sum \limits_{\theta}|N_{\theta}(r)|\geq \sum \limits_{\theta}|\Lambda_{\theta}(r)|-\binom{d+1}{2}\sum \limits_{\theta,z}\lambda_{r,\theta}^d(z).
\end{equation}
\medskip

Taking into account $\eqref{eq6.2}$ and $\eqref{eq6.7}$ the inequality $\eqref{eq6.3}$ can be written as 
\begin{equation}
\label{eq6.8}
    |\mathcal{P}|\geq \frac{1}{|\mathrm{O}_d(\mathbb{F}_p)|}\bigg(\norm{\lambda_{r,\theta}(z)}_{d+1}^{d+1}-\binom{d+1}{2}\norm{\lambda_{r,\theta}(z)}_{d}^{d}\bigg).
\end{equation}

We can derive the lower bound for the $\norm{\lambda_{r,\theta}(z)}_{d+1}^{d+1}$ by means of Hölder's inequality. Indeed,

\begin{equation}
\label{eq6.9}
    \sum \limits_{z\in \mathbb{F}_p^d}\lambda_{r,\theta}(z)\leq \bigg(\sum \limits_{z\in \mathbb{F}_p^d}\lambda_{r,\theta}^{d+1}(z)\bigg)^{\frac{1}{d+1}}\times \bigg(\sum \limits_{z\in \mathbb{F}_p^d}1\bigg)^{\frac{d}{d+1}}. 
\end{equation}

From the definition of $\lambda_{r,\theta}(z)$ follows that $\sum \limits_{z\in \mathbb{F}_p^d}\lambda_{r,\theta}(z)=|E|^2.$ 

Therefore, $\eqref{eq6.9}$ implies that 
\begin{equation}
\label{eq6.10}
\sum \limits_{z\in \mathbb{F}_p^d}\lambda_{r,\theta}^{d+1}(z)\geq \frac{|E|^{2d+2}}{p^{d^2}}.
\end{equation}

Summing $\eqref{eq6.10}$ over all $\theta \in \mathrm{O}_d(\mathbb{F}_p)$, we obtain the following lower bound for the $L^{d+1}$-norm of $\lambda_{r,\theta}(z)$:

\begin{equation}
\label{eq6.11}
    \norm{\lambda_{r,\theta}(z)}_{d+1}^{d+1}\geq |\mathrm{O}_d(\mathbb{F}_p)|\times \frac{|E|^{2d+2}}{p^{d^2}}.
\end{equation}

We can write the inequality \eqref{eq6.8} as follows:
\begin{equation*}
    |\mathcal{P}|\geq \frac{1}{|\mathrm{O}_d(\mathbb{F}_p)|}\Bigg(\norm{\lambda_{r,\theta}(z)}_{d+1}^{d+1} -\binom{d+1}{2}\Bigg(\sum_{\lambda \geq d(d+1)}\lambda^{d}_{r,\theta}(z)+\sum_{\lambda<d(d+1)}\lambda^{d}_{r,\theta}(z)\Bigg)\Bigg).
\end{equation*}

\medskip

We notice that the expression in the parenthesis can be written as $\textup{I}+\textup{II}$, where

\begin{equation*}
    \textup{I}\coloneqq \frac{\norm{\lambda_{r,\theta}(z)}_{d+1}^{d+1}}{2} -\binom{d+1}{2}\sum_{\lambda \geq d(d+1)}\lambda^{d}_{r,\theta}(z),
\end{equation*}
\begin{equation*}
    \textup{II}\coloneqq \frac{\norm{\lambda_{r,\theta}(z)}_{d+1}^{d+1}}{2} -\binom{d+1}{2}\sum_{\lambda < d(d+1)}\lambda^{d}_{r,\theta}(z).
\end{equation*}

It is not difficult to show that $\textup{I}\geq 0$. Indeed,

\begin{equation}
\label{eq6.12}
\begin{split}
    \textup{I}&=\frac{1}{2}\sum_{\theta, z}\lambda_{r,\theta}^{d+1}(z)-\binom{d+1}{2}\sum_{\lambda\geq d(d+1)}\lambda_{r,\theta}^d(z)\\[5pt]
    &\geq\frac{1}{2}\sum_{\lambda\geq d(d+1)}\lambda_{r,\theta}^{d+1}(z)-\binom{d+1}{2}\sum_{\lambda\geq d(d+1)}\lambda_{r,\theta}^d(z)\\[5pt]
    &=\frac{1}{2}\sum_{\lambda\geq d(d+1)} \lambda_{r,\theta}^{d}(z)\Big(\lambda_{r,\theta}(z)-d(d+1) \Big)\geq 0.
\end{split}
\end{equation}

Using \eqref{eq6.11} one can obtain the following lower bound for \textup{II}:

\medskip

\begin{equation}
\label{eq6.13}
\begin{split}
    \textup{II}&\geq  \frac{|\mathrm{O}_d(\mathbb{F}_p)|}{2}\times \frac{|E|^{2d+2}}{p^{d^2}}-(d(d+1))^d\binom{d+1}{2}\times|\mathrm{O}_d(\mathbb{F}_p)|\times|\mathbb{F}_p^d|\\[5pt]
    &=\frac{|\mathrm{O}_d(\mathbb{F}_p)|}{2}\Bigg(\frac{|E|^{2d+2}}{p^{d^2}}-(d^2+d)^{d+1}p^d \Bigg).
\end{split}   
\end{equation}

Combining inequalities \eqref{eq6.12}, \eqref{eq6.13} and since $|\mathrm{O}_2(\mathbb{F}_p)|>0$, we have
\begin{equation*}
    |\mathcal{T}|\geq \frac{1}{2}\Bigg(\frac{|E|^{2d+2}}{p^{d^2}}-(d^2+d)^{d+1}p^d\Bigg).
\end{equation*}

It is easy to verify that if $|E|\geq (d+1)p^{\frac{d}{2}}$, then $|\mathcal{T}|>0$ and this completes the proof of Theorem \ref{ptsconfigthm}.

\bigskip

\section{Proof of Theorem \ref{lowerboundS_3(r)}}
\label{sec:7}

Basically, using Cauchy-Schwarz inequality we were able to derive lower bounds for $|S_{2}(r)|$ and $|C(r)|$ in Lemma \ref{lower bound for S_2(r)} and \ref{lem2.4} in terms of $|S_1(r)|$. However, this technique fails for $|S_3(r)|$ since paths of length 3 have an odd number of edges. In this section, we will show how to obtain the lower bound for $|S_k(r)|$ in terms of $|S_1(r)|$ by means of Theorem \ref{3path}. 

\smallskip

Let $r\in \mathbb{F}_p^{*}$, $p$ be a prime such that $p\equiv 3 \pmod 4$ and $E\subset \mathbb{F}_p^2$.

\smallskip

Define the graph $G=(V,E)$ as follows: let $V:=E\times E\equiv\{(x,x'):x,x'\in E\}$. If $(x,x'),(y,y')\in V$, then we connect them via an edge iff $(x,x')\neq (y,y')$ and $\lVert y'-x'\rVert=r\lVert y-x\rVert$.

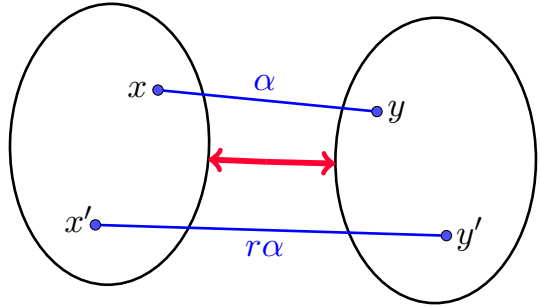
\begin{figure}[htbp]
\centering
\usetikzlibrary{arrows}

\definecolor{ffqqtt}{rgb}{1.,0.,0.2}
\definecolor{ududff}{rgb}{0.30196078431372547,0.30196078431372547,1.}
\definecolor{qqqqff}{rgb}{0.,0.,1.}

\begin{tikzpicture}[scale=0.65][line cap=round,line join=round,>=triangle 45,x=1.0cm,y=1.0cm]
\clip(-1.,-5.5) rectangle (13.5,2.);
\draw [rotate around={88.58905917961856:(2.93,-1.57)},line width=1.pt] (2.93,-1.57) ellipse (2.875334293907589cm and 2.0357178836275556cm);
\draw [rotate around={89.44909602078515:(9.6,-1.9)},line width=1.pt] (9.6,-1.9) ellipse (2.921090065348543cm and 2.0508454768407027cm);
\draw [line width=1.pt,color=qqqqff] (3.92,-0.46)-- (8.4,-0.9);
\draw [line width=1.pt,color=qqqqff] (2.64,-3.22)-- (9.82,-3.44);
\draw [->,line width=2.2pt,color=ffqqtt] (6.133333333333335,-1.9288888888888882) -- (7.5492881733743875,-1.964135623850796);
\draw [->,line width=2.2pt,color=ffqqtt] (6.133333333333335,-1.9288888888888882) -- (4.950241370090094,-1.8811883012738628);

\draw[color=qqqqff] (6.1,-0.3) node {\scalebox{1.2}{$\alpha$}};
\draw[color=qqqqff] (6.1,-3.7) node {\scalebox{1.2}{$r\alpha$}};

\draw [fill=ududff] (3.92,-0.46) circle (3pt);
\draw[color=black] (3.5,-0.45) node { \scalebox{1.2} {$x$} };
\draw [fill=ududff] (8.4,-0.9) circle (3pt);
\draw[color=black] (8.8,-0.9) node { \scalebox{1.2} {$y$}};
\draw [fill=ududff] (2.64,-3.22) circle (3pt);
\draw[color=black] (2.3,-3.1) node { \scalebox{1.2} {$x'$}};
\draw [fill=ududff] (9.82,-3.44) circle (3pt);
\draw[color=black] (10.3,-3.4) node 
{\scalebox{1.2} {$y'$} };
\end{tikzpicture}
\caption{Joining vertices $(x,x')$ and $(y,y')$ with an edge.}
\end{figure}

Fix $(x,x')\in V$ and consider the degree of this vertex:
\begin{equation*}
\begin{split}
    \deg((x,x'))&=\left|\left\{(y,y')\in E^2: (y,y') \ \mathrm{is\ incident\ with}\ (x,x')\right\}\right|\\[5pt]
    &=\left|\left\{(y,y')\in E^2: \lVert y'-x'\rVert=r\lVert y-x\rVert, \ (x,x')\neq (y,y')\right\}\right|.
\end{split}
\end{equation*}

Therefore, we have \begin{equation*}
\begin{split}
    \sum \limits_{(x,x')\in V}\deg((x,x'))&=\sum  \limits_{x,x'\in E}\left|\left\{(y,y')\in E^2: \lVert y'-x'\rVert=r\lVert y-x\rVert, \ (x,x')\neq (y,y')\right\}\right|\\[5pt]
    &=\left|\left\{(x,y,x',y')\in E^4: \lVert y'-x'\rVert=r\lVert y-x\rVert, \ (x,x')\neq (y,y')\right\}\right|\\[5pt]
    &=\left|\left\{(x,y,x',y')\in E^4: \lVert y'-x'\rVert=r\lVert y-x\rVert, \ x\neq y\right\}\right|\\[5pt]
    &=|S_1(r)|.
\end{split}
\end{equation*}
\begin{remark}
In the penultimate equality we have used the fact that $\lVert x\rVert=0$ iff $x=(0,0)$ since $p\equiv 3 \pmod 4$.
\end{remark}

By degree sum formula it follows that $e(G)=|S_1(r)|/{2}$. Therefore, we constructed the graph with $|E|^2$ vertices and ${|S_1(r)|}/{2}$ edges.

\medskip

The number of paths of length $k\geq 3$ in our graph $G=(V,E)$ is equal to the size of $S_k(r)$. Indeed,
\begin{equation*}
    \left|\left\{(v_1,v_2,\dots,v_{k+1})\in V^{k+1}:v_iv_{i+1}\in E,\ i\in [k]\right\}\right|
\end{equation*}

\begin{equation*}
    =\left|\left\{((x_1,y_1),(x_2,y_2),\dots,(x_{k+1},y_{k+1}))\in V^{k+1}: (x_i,y_i)\sim(x_{i+1},y_{i+1}),\ i\in [k]\right\}\right|
\end{equation*}

\begin{equation*}
    =\left|\left\{(x_1,\dots,x_{k+1},y_1,\dots,y_{k+1})\in E^{2k+2}: \lVert y_i-y_{i+1}\rVert=r\lVert x_i-x_{i+1}\rVert,\ x_i\neq x_{i+1},\ i\in [k] \right\}\right|
\end{equation*}
\begin{equation*}
    =|S_k(r)|.
\end{equation*}

Combining this with Theorem \ref{3path}, we have 
\begin{equation}
\label{eq7.4}
    |S_k(r)|\geq \frac{(2e(G))^k}{n^{k-1}} \quad \Leftrightarrow \quad |S_k(r)|\geq \frac{|S_1(r)|^k}{|E|^{2k-2}}.    
\end{equation}

Lemma \ref{lower bound for S(r)} implies that if $|E|>2p$, then 
\begin{equation}
\label{weaklowerboundS_1(r)}
    |S_1(r)|> \dfrac{|E|^4}{3p}.     
\end{equation}

Therefore, comparing $\eqref{eq7.4}$ with $\eqref{weaklowerboundS_1(r)}$, we obtain 
\begin{equation*}
    |S_k(r)|>\frac{|E|^{2k+2}}{(3p)^k},
\end{equation*} 
which proves Theorem $\ref{lowerboundS_3(r)}$.

\bibliographystyle{plain}
\bibliography{refref.bib}

\end{document}